\newcommand{\Z}{\mathbb Z}
\newcommand{\N}{\mathbb N}
\newcommand{\R}{\mathbb R}\newcommand{\T}{\mathbb T}
\newcommand{\V}{\mathbb V}\newcommand{\G}{\mathbb G}
\newcommand{\oZ}{\operatorname{Z}} \newcommand{\oY}{\operatorname{Y}}  
\newcommand{\oL}{\operatorname{L}} \newcommand{\oM}{\operatorname{M}} 
\newcommand{\oN}{\operatorname{N}} 
\newcommand{\oZpr}{\operatorname{Z^\prime}}
\newcommand{\oZp}{\operatorname{Z}^{\scriptscriptstyle+}}
\newcommand{\oZm}{\operatorname{Z}^{\scriptscriptstyle-}}
\newcommand{\oZpm}{\operatorname{Z}^{\scriptscriptstyle\pm}}
\def\abs#1{\vert#1\vert}
\def\cZ{\mathcal Z} 
\def\cK{\mathcal K}\def\cT{\mathcal T}
\def\rn{\R^n} 
\def\lpn{{\cal P}(\Z^n)} 
\def\lp#1{{\cal P}(\Z^{#1})} 
\def\relint{\operatorname{relint}} 
\def\relbd{\operatorname{relbd}} 
\newcommand{\slnz}{\operatorname{SL}_n(\Z)} 
\newcommand{\glnz}{\operatorname{GL}_n(\Z)} 
\def\glz#1{\operatorname{GL}_{#1}(\Z)}
\def\slz#1{\operatorname{SL}_{#1}(\Z)}
\newtheorem{theorem}{Theorem}
\newtheorem*{theo}{Theorem}
\newtheorem{corollary}[theorem]{Corollary}
\newtheorem{proposition}[theorem]{Proposition}
\newtheorem{lemma}[theorem]{Lemma}
\title{Tensor valuations on lattice polytopes}
\author{Monika Ludwig and Laura Silverstein}
\date{}
\begin{document}
\maketitle

\begin{abstract}
The Ehrhart polynomial and the reciprocity theorems by Ehrhart \& Macdonald are extended to tensor valuations on lattice polytopes.
A complete classification is established of tensor valuations of rank up to eight   that are equivariant with respect to the special linear group over the integers and  translation covariant. Every such valuation is a linear combination of the Ehrhart tensors which is shown to no longer hold true for rank nine.

\bigskip

{\noindent
2010 AMS subject classification: 52B20, 52B45}
\end{abstract}

\section{Introduction and statement of results}
Tensor valuations on convex bodies have attracted increasing attention in recent years (see, e.g., \cite{KiderlenVedelJensen,BernigHug,  HugSchneider14}). They were introduced by McMullen in \cite{McMullen:isometry} 
and Alesker subsequently obtained a complete classification of continuous and isometry equivariant tensor valuations on convex bodies (based on \cite{Alesker99} but completed in \cite{Alesker00a}).
Tensor valuations have found applications in different fields and subjects; in particular, in Stochastic Geometry and Imaging  (see \cite{KiderlenVedelJensen}). 
The aim of this article is to begin to develop the theory of tensor valuations on lattice polytopes.

Let $\lpn$ denote the set of lattice polytopes in $\R^n$; that is, the set of convex polytopes with vertices in the integer lattice $\Z^n$. In general, a full-dimensional lattice in $\rn$ is an image of $\Z^n$ by an invertible linear transformation and, therefore, all results can easily be translated to the general situation of polytopes with vertices in an arbitrary lattice.  A function $\oZ$ defined on $\lpn$ with values in an abelian semigroup
is a \emph{valuation} if 
\begin{equation*}\label{valdef}
\oZ(P)+\oZ(Q)=\oZ(P\cup Q)+\oZ (P\cap Q)
\end{equation*}
whenever $P,Q,P\cup Q,P\cap Q \in \lpn$ and $\oZ(\emptyset)=0$. 

\goodbreak
For $P\subset \R^n$, the lattice point enumerator, $\oL(P)$, is defined as
\begin{equation}\label{lpe}
  \index{lattice point enumerator}
  \oL(P)=\sum_{x\in P\cap\Z^n}1.
\end{equation} 
Hence, $\oL(P)$ is the number of lattice points in $P$ and $P\mapsto
\oL(P)$ is a valuation on $\lpn$. A function $\oZ$ defined on $\lpn$ is \emph{$\slnz\!$ invariant} if $\oZ(\phi P)= \oZ(P)$ for all $\phi\in\slnz$ and $P\in\lpn$ where $\slnz$ is the special linear group over the integers; that is, the group of transformations that can be described by $n\times n$ matrices of determinant 1 with integer coefficients. A function $\oZ$ is \emph{translation invariant} on $\lpn$ if $\oZ(P+y)= \oZ(P)$ for all $y\in\Z^n$ and $P\in\lpn$. It is \emph{$i$-homogeneous} if $\oZ(k\,P)= k^i \oZ(P)$ for all $k\in\N$ and $P\in\lpn$ where $\N$ is the set of non-negative integers.

\goodbreak
A fundamental result   on lattice polytopes by Ehrhart \cite{Ehrhart62} introduces the so-called Ehrhart polynomial and was the beginning of what is now known as Ehrhart Theory (see \cite{Barvinok2008, BeckRobins}).

\begin{theo}[Ehrhart]\label{Ehrhart}
  \index{Ehrhart polynomial}  There exist $\,\oL_i:\lpn \to \R\,$ for  $i=0,\ldots, n$ such that
  \begin{equation*}
    \oL(kP)=\sum_{i=0}^{n}\oL_i(P)k^i
  \end{equation*} 
  for every $k\in\N$ and $P\in\lpn$. For each $i$, the
  functional $\,\oL_i$ is  an $\,\slnz$ and translation invariant valuation that is homogeneous of degree $i$.
\end{theo}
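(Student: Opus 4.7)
My plan is to first establish the polynomial identity for lattice simplices via a generating function argument, then extend it to all lattice polytopes using the valuation property, and finally deduce the properties of the coefficients $\oL_i$ from those of $\oL$.

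As a preliminary step, I would reduce to lattice simplices. Every $P\in\lpn$ admits a triangulation into lattice simplices, and iterating the valuation property (together with the standard inclusion-exclusion principle for valuations) expresses $\oL(kP)$ as an alternating sum of values of $\oL$ on $k$-dilates of faces of that triangulation. All such faces are lattice simplices of dimension at most $\dim P$, so it suffices to establish polynomiality in $k$ of degree at most $\dim P$ in the simplex case.

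The key step is the simplex case, which I would treat by the classical cone construction. Given a $d$-dimensional lattice simplex $S\subset\R^n$ with vertices $v_0,\ldots,v_d$, lift them to $w_i=(v_i,1)\in\Z^{n+1}$ and form the rational cone $C=\{\sum_{i=0}^d\lambda_i w_i:\lambda_i\ge 0\}$. Then $\oL(kS)$ equals the number of lattice points of $C$ whose last coordinate equals $k$. The half-open parallelepiped $\Pi=\{\sum_{i=0}^d\lambda_i w_i:0\le\lambda_i<1\}$ contains finitely many lattice points, and every lattice point of $C$ can be written uniquely as $p+\sum_{i=0}^d n_i w_i$ with $p\in\Pi\cap\Z^{n+1}$ and $n_i\in\N$. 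Tracking the last coordinate yields the generating function identity
\[
\sum_{k=0}^\infty \oL(kS)\,t^k \;=\; \frac{\sum_{p\in\Pi\cap\Z^{n+1}} t^{h(p)}}{(1-t)^{d+1}},
\]
where $h(p)$ denotes the last coordinate of $p$. Since $0\le h(p)\le d$ for $p\in\Pi$, the numerator is a polynomial of degree at most $d$, and expanding $(1-t)^{-(d+1)}$ as a binomial series forces $\oL(kS)$ to agree with a polynomial in $k$ of degree at most $d\le n$ for all $k\in\N$.

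For the coefficients, I would invoke Vandermonde invertibility to write each $\oL_i(P)$ as a fixed $\Q$-linear combination of $\oL(0\cdot P),\oL(P),\ldots,\oL(nP)$. Since the map $P\mapsto\oL(kP)$ is an $\slnz$ and translation invariant valuation for every fixed $k\in\N$, each $\oL_i$ inherits all three properties. The homogeneity of degree $i$ follows by equating coefficients of $k^i$ in the two expressions $\oL(kjP)=\sum_i\oL_i(jP)k^i$ and $\oL(kjP)=\sum_i \oL_i(P)(kj)^i$, valid for every $j\in\N$. The principal obstacle is the generating function computation for simplices; the reduction from polytopes to simplices and the extraction of the coefficients are then formal.
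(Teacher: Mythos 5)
Your proof is correct, but it takes a genuinely different route from the paper. The paper does not reprove Ehrhart's theorem at all; it cites the original result and then proves the more general tensor extension (Theorem~\ref{tensor_ehrhart}, via Corollary~\ref{cor:38}) by invoking the Khovanski{\u\i}--Pukhlikov theorem (Theorem~\ref{poly}/Theorem~\ref{Polynomial}) on translative polynomial valuations: since $\oL$ is a translation invariant (degree-$0$ translative polynomial) valuation, the general structure theorem immediately yields the homogeneous decomposition, and the scalar statement is the $r=0$ instance. Your argument instead follows the classical, elementary route: triangulate, pass to the lattice cone over a simplex, tile it by the half-open fundamental parallelepiped, read off the $\delta$-vector from the heights, and deduce polynomiality from the generating function $\sum_k \oL(kS)t^k = (\sum_p t^{h(p)})/(1-t)^{d+1}$; coefficients are then extracted by Vandermonde invertibility. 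What each buys: the paper's approach is structural and uniform, dispatching the scalar and tensor cases simultaneously and without any cone machinery, but it rests on the nontrivial Khovanski{\u\i}--Pukhlikov input; yours is self-contained from first principles and additionally delivers the nonnegative $\delta$-vector (hence positivity and reciprocity information) for free, but does not generalize directly to the tensor setting, where the valuation is only translation covariant rather than invariant and the cone computation becomes substantially more involved. Both the reduction to simplices and the extraction of the invariance, valuation, and homogeneity properties of the $\oL_i$ in your write-up are sound.
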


\noindent
Note that $\oL_n(P)$ is the $n$-dimensional volume, $V_n(P)$, 
 and  $\oL_0(P)$  the Euler characteristic of $P$, that
is, $\oL_0(P)=1$ for $P\in\lpn$ non-empty and
$\oL_0(\emptyset)=0$. Also note that $\oL_i(P)=0$ for $P\in\lpn$ with  $\dim(P)<i$, where $\dim(P)$ is the dimension of the affine hull of $P$.

Extending the definition of the lattice point enumerator \eqref{lpe}, for $P\in\lpn$ and a non-negative integer $r$, we
define the \emph{discrete moment tensor of rank $r$} by
\begin{equation*}
  \label{discrete moment tensor}
  \oL^r(P)=\frac1{r!}\sum_{x\in P\cap\Z^n}x^r
\end{equation*}
where $x^r$ denotes the $r$-fold symmetric tensor product of $x\in\R^n$. Let
$\T^r$ denote the vector space of symmetric tensors of rank $r$ on
$\R^n$. We then have $\T^0=\R$ and $\oL^0=\oL$. For $r=1$, we obtain the {\em discrete moment vector}, which was introduced in 
\cite{BoeroeczkyLudwig}. For $r\ge2$, discrete moment tensors were introduced in \cite{BoeroeczkyLudwig_survey}.
The discrete moment tensor is a natural discretization of the \emph{moment tensor of rank $r$} of $P\in\lpn$ which is defined to be
\begin{equation}\label{moment_tensor}
    \oM^r(P)=\frac1{r!}\int_P x^r \,dx.
\end{equation}
For $r=0$ and $r=1$, respectively, this is the $n$-dimensional volume, $V_n$, and the moment vector. See \cite[Section 5.4]{Schneider:CB2} for more information on moment tensors and  \cite{HaberlParapatits_tensor, HaberlParapatits_moments, Ludwig:matrix, Ludwig:origin, Haberl:Parapatits_centro, LYZ2002, LYZ2000b} for some recent results. 
\goodbreak
 Corresponding to the theorem of Ehrhart, we establish the existence of a homogeneous decomposition for the discrete moment tensors for integers $r\ge 1$.

\begin{theorem}
  \label{tensor_ehrhart}
 There exist $\,\oL^r_i: \lpn \to \T^r$ for  $i=1,\ldots, n+r$ such
  that
  \begin{equation*}
    \oL^r(kP)=\sum_{i=1}^{n+r} \oL^r_i(P)k^i
  \end{equation*} 
  for every $k\in\N$ and $P\in\lpn$. For each $i$, the function
  $\,\oL^r_i$ is an $\slnz$ equivariant, translation covariant and $i$-homogeneous valuation.
\end{theorem}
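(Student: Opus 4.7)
The plan is to mirror the proof of Ehrhart's theorem: first verify that $\oL^r$ itself is an $\slnz$ equivariant, translation covariant valuation from $\lpn$ to $\T^r$, then prove that for every $P\in\lpn$ the map $k\mapsto \oL^r(kP)$ is a $\T^r$-valued polynomial in $k$ of degree at most $n+r$ with no constant term, and finally extract the coefficients $\oL^r_i(P)$ and inherit the stated properties from those of $\oL^r$. The valuation property of $\oL^r$ is immediate from inclusion--exclusion applied pointwise to $P\cap\Z^n$, since $x\mapsto x^r$ is a fixed $\T^r$-valued function. For $\phi\in\slnz$, the identity $(\phi x)^r = \phi^{\otimes r}(x^r)$ gives $\slnz$ equivariance of $\oL^r$, and the symmetric binomial expansion
\begin{equation*}
  \oL^r(P+y) = \sum_{j=0}^r \frac{1}{(r-j)!}\,\oL^j(P)\, y^{r-j}
\end{equation*}
for $y\in\Z^n$ encodes its translation covariance.

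The heart of the argument is polynomiality in $k$. In coordinates, every component of $\oL^r(kP)$ is, up to a fixed multinomial factor, of the form $\sum_{x\in kP\cap\Z^n} x^\alpha$ for a multi-index $\alpha$ with $|\alpha|=r$, so it suffices to prove the weighted version of Ehrhart's theorem: for every monomial $q$ of degree $r$ on $\R^n$, $k\mapsto \sum_{x\in kP\cap\Z^n} q(x)$ is a polynomial in $k$ of degree at most $n+r$. I would prove this by using that $P\mapsto \sum_{x\in P\cap \Z^n} q(x)$ is itself a valuation to dissect $P$ into half-open lattice simplices, and then handling a half-open lattice simplex $\Delta$ of dimension $d\leq n$ with vertices $v_0,\ldots,v_d$ via the cone $C = \R_{\ge 0}\operatorname{conv}\{(v_0,1),\ldots,(v_d,1)\}\subset \R^{n+1}$: lattice points in $k\Delta$ correspond to lattice points of $C$ at height $k$, so the generating series $\sum_{k\ge 0}\bigl(\sum_{x\in k\Delta\cap\Z^n} q(x)\bigr)t^k$ is a rational function of the form $N_q(t)/(1-t)^{d+1}$ with $N_q$ a polynomial of degree at most $d+r$, yielding polynomiality in $k$ of degree at most $d+r\leq n+r$. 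Specializing $k=0$ forces the constant term to vanish whenever $r\geq 1$.

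Once polynomiality is in hand, each $\oL^r_i(P)$ is uniquely determined and depends linearly on $\bigl(\oL^r(kP)\bigr)_{k=1}^{n+r}$ by Vandermonde inversion, so every property of $\oL^r$ that behaves coefficient-by-coefficient transfers: the valuation property; $i$-homogeneity from comparing $\oL^r(kmP)=\sum_i \oL^r_i(P)(km)^i$ with $\oL^r(m(kP))=\sum_i \oL^r_i(mP)k^i$; $\slnz$ equivariance from $\phi^{\otimes r}\oL^r(kP)=\oL^r(k\phi P)$; and translation covariance from substituting $P+y$ into the polynomial expansion and expanding via the binomial identity above. The main obstacle is the weighted Ehrhart polynomiality on a half-open simplex; the surrounding bookkeeping is linear-algebraic and routine.
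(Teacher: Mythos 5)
Your proposal is correct in outline but takes a genuinely different route from the paper. The paper reduces the tensor case to real-valued valuations by fixing $v_1,\dots,v_r$ and considering $P\mapsto\oL^r(P)(v_1,\dots,v_r)$, observes that translation covariance makes each of these translative polynomial of degree at most $r$, and then invokes the Khovanskii--Pukhlikov theorem (restated as Theorem~\ref{Polynomial}, with Theorem~\ref{tensor-poly} as the tensor-valued consequence) to produce the homogeneous decomposition and all the inherited properties at once; the vanishing of the constant term for $r\ge 1$ is then read off from the $\slnz$ equivariance via Lemma~\ref{high_coord} and \eqref{L0}. You instead prove polynomiality from scratch by an Ehrhart-theoretic generating-function argument over half-open lattice simplices. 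That is more elementary and self-contained (it does not presuppose Khovanskii--Pukhlikov), and it gives a concrete weighted Ehrhart theorem as a byproduct, at the cost of redoing the simplex bookkeeping and verifying coefficient-by-coefficient inheritance that Theorem~\ref{Polynomial} delivers structurally.

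One technical slip that would matter if left as written: for a $d$-dimensional half-open lattice simplex $\Delta$ and a monomial weight $q$ of degree $r$, the Ehrhart series $\sum_{k\ge 0}\bigl(\sum_{x\in k\Delta\cap\Z^n}q(x)\bigr)t^k$ is of the form $N_q(t)/(1-t)^{d+r+1}$ with $\deg N_q\le d+r$, not $N_q(t)/(1-t)^{d+1}$. With a denominator $(1-t)^{d+1}$ the coefficient of $t^k$ is, for large $k$, a polynomial in $k$ of degree only $d$, which contradicts the degree $d+r$ you correctly want (compare the one-dimensional model $\sum_{i=0}^k i^r$, whose generating function has denominator $(1-t)^{r+2}$). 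With the exponent corrected the argument goes through, and your direct observation that $\oL^r(0\cdot P)=\oL^r(\{0\})=0$ for $r\ge 1$ is a clean way to kill the constant term, simpler for $\oL^r$ than the paper's general argument via Lemma~\ref{high_coord}.
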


\noindent For the definition of $\slnz$ equivariance and  translation co\-variance, see Section \ref{tools}.
The coefficients yield new valuations that we introduce here as {\em Ehrhart tensors}.
Note that  $\oL^{r}_{n+r}(P)$ is the moment tensor of $P$  and that $\oL^r_{i+r}(P)=0$ for $i>\dim(P)$ (see Section  \ref{coprop}). 
The existence of the homogeneous decomposition is proved for general  tensor valuations  in Section \ref{sec_ehrhart}.
The proof  is based on results by Khovanski{\u\i} \& Pukhlikov \cite{PukhlikovKhovanskij}.

\goodbreak
A second fundamental result on lattice polytopes is the reciprocity theorem of Ehrhart \cite{Ehrhart62} and  Macdonald \cite{Macdonald71}. 

\begin{theo}[Ehrhart \& Macdonald] \label{EMreciprocity}
 For $P\in\lpn$, the relation
$$ \oL(\relint P)=(-1)^{m}\sum_{i=0}^{m}(-1)^i \oL_i(P) $$
holds where $m=\dim(P)$.
\end{theo}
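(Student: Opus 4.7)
The plan is to prove the stronger polynomial identity
\begin{equation*}
L_P(-k) = (-1)^m L_{P^\circ}(k) \qquad (k \in \Z),
\end{equation*}
where $L_P(k) := \oL(kP) = \sum_{i=0}^n \oL_i(P)\,k^i$ (extended to $\Z$ via Theorem~\ref{Ehrhart}) and $L_{P^\circ}(k) := \oL(\relint(kP))$. Evaluating at $k=1$, using $\oL_i(P)=0$ for $i>m$, and rearranging then yields the stated formula.

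First I would show that $L_{P^\circ}$ extends to a polynomial. The disjoint decomposition $kP = \bigsqcup_{F \leq P} \relint(kF)$ over the face lattice of $P$, applied to $P$ and to each face, gives
\begin{equation*}
L_F(k) = \sum_{G \leq F} L_{G^\circ}(k) \qquad \text{for every face } F \leq P \text{ and } k \in \N,
\end{equation*}
where $L_F$ is the Ehrhart polynomial of $F$ viewed as a lattice polytope in $\aff F$. Möbius inversion in the face lattice, combined with the Euler relation for polytopes (which yields $\mu(G,F) = (-1)^{\dim F - \dim G}$), produces
\begin{equation*}
L_{P^\circ}(k) = \sum_{F \leq P} (-1)^{m - \dim F}\, L_F(k),
\end{equation*}
and the right-hand side is polynomial in $k$ by Theorem~\ref{Ehrhart}.

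The core of the argument is the reciprocity identity itself. The cleanest route goes through the homogenization cone $C_P = \{(\lambda, \lambda x) : \lambda \geq 0,\, x \in P\} \subset \R^{n+1}$, whose lattice points at height $k$ are in bijection with those of $kP$. Its Ehrhart series is a rational function of the form $\sum_{k \geq 0} L_P(k)\, t^k = h^*(t)/(1-t)^{m+1}$ for some polynomial $h^*$, and Stanley's reciprocity for rational pointed cones relates the generating series of $C_P$ to that of its relative interior by the substitution $t \mapsto 1/t$ up to a sign. Unpacking this on the level of coefficients delivers $L_P(-k) = (-1)^m L_{P^\circ}(k)$. An alternative route is to triangulate $P$ into half-open lattice simplices, verify the identity on the standard simplex by direct binomial manipulation of $\sum_k \binom{k+m}{m}t^k$ versus $\sum_k \binom{k-1}{m}t^k$, and reassemble by inclusion-exclusion.

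The main obstacle is establishing the cone reciprocity (or, in the triangulation approach, the combinatorial bookkeeping for the half-open pieces glueing into $P$). Once the polynomial identity $L_P(-k) = (-1)^m L_{P^\circ}(k)$ is secured, specialization at $k=1$ finishes the proof.
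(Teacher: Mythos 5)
The paper does not actually prove this classical theorem; it is stated under the unnumbered \texttt{theo} environment and attributed to Ehrhart~\cite{Ehrhart62} and Macdonald~\cite{Macdonald71}. The machinery the paper \emph{does} develop (Section~\ref{reciprocity}) would recover it as the rank-zero case of its tensor generalization: one cites McMullen's valuation-theoretic reciprocity, Theorem~\ref{McM_rec}, which asserts $\oZ^\circ(P)=(-1)^i\oZ(-P)$ for every $i$-homogeneous translation-invariant valuation $\oZ$, applies it to each $\oL_i$, combines this with the face-sum identity~\eqref{intP} relating $\oZ(\relint P)$ to $\oZ^\circ(P)$, and uses $\oL_i(-P)=\oL_i(P)$ to drop the reflection. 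Your proposal is correct but takes a genuinely different, more combinatorial route: you aim directly at the polynomial identity $L_P(-k)=(-1)^m L_{P^\circ}(k)$ by homogenizing to the cone $C_P$, invoking Stanley's reciprocity for the generating series of rational pointed cones (or, alternatively, triangulating into half-open unimodular simplices and verifying the binomial identity on the standard simplex), and then specializing at $k=1$. Your M\"obius-inversion step for $L_{P^\circ}$ is exactly the content of~\eqref{intP}, so the two arguments overlap there. The tradeoff is that the valuation-theoretic route the paper implicitly follows is what makes the tensor extension (Theorems~\ref{hom_rec} and~\ref{tensor_reciprocity}) transparent---it isolates the algebraic structure needed and inducts on the rank via associated functions---whereas your cone/generating-function approach is self-contained and more concrete but does not directly suggest how to handle translation-\emph{covariant} tensor valuations, which is the paper's actual goal. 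As a proof of the scalar Ehrhart--Macdonald theorem in isolation, your outline is sound, provided you fill in the proof of Stanley reciprocity for cones or the half-open triangulation bookkeeping, which you correctly flag as the main obstacle.
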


\noindent
Here, we write $\relint(P)$ for the relative interior of $P$ with respect to the affine hull of $P$.
We establish a reciprocity result corresponding to the Ehrhart-Macdonald Theorem for the discrete moment tensor.

\begin{theorem}
  \label{tensor_reciprocity}
  For $P\in\lpn$, the relation
  \begin{equation*}
    \oL^r (\relint P)=(-1)^{m+r}\sum_{i=1}^{m+r}  (-1)^i \oL^r_i(P)
  \end{equation*}
  holds where $m=\dim(P)$.
\end{theorem}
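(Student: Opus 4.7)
The plan is to prove the stronger polynomial identity
\begin{equation*}
  \sum_{i=1}^{m+r}(-k)^{i}\,\oL^r_i(P)\;=\;(-1)^{m+r}\,\oL^r(\relint(kP))\tag{$\ast$}
\end{equation*}
for every $k\in\N$ and every $P\in\lpn$ of dimension $m$, and then to specialize to $k=1$. By Theorem \ref{tensor_ehrhart}, the left-hand side of $(\ast)$ is the formal evaluation at $-k$ of the polynomial $k\mapsto\oL^r(kP)$, so $(\ast)$ is precisely a reciprocity statement between this polynomial extended to negative arguments and the discrete moment tensor of the relative interior.

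Both sides of $(\ast)$ are valuations in $P$: the left-hand side by Theorem \ref{tensor_ehrhart}, the right-hand side because $P\mapsto\oL^r(\relint P)$ is additive over half-open dissections of lattice polytopes. I would therefore reduce the proof to the case of a lattice simplex via a half-open triangulation of $P$ whose vertices lie in $P\cap\Z^n$, using an inclusion--exclusion over faces to reconcile $\relint(kP)$ with the relative interiors of the half-open cells. For a lattice simplex $S$ of dimension $m$ I would verify $(\ast)$ by lifting $S$ to height one in $\R^{n+1}$, forming the rational cone $C$ over the lifted vertices, and studying the $\T^r$-valued generating function
\[
  \sum_{k\ge 0}\oL^r(kS)\,t^k
  \;=\;\tfrac{1}{r!}\sum_{(x,k)\in C\cap\Z^{n+1}} x^{r}\,t^{k},
\]
which is a rational function of $t$ with denominator $(1-t)^{m+r+1}$. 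Decomposing $C$ into unimodular simplicial cones reduces the sum to geometric series with polynomial weights, for which the classical cone reciprocity ($t\mapsto 1/t$, or equivalently $x\mapsto -x$) yields a functional equation relating this series to its interior analogue $\sum_{k\ge 1}\oL^r(\relint(kS))\,t^k$. The substitution $x\mapsto -x$ produces an extra factor $(-1)^r$ because the weight $x^r$ is $r$-homogeneous, and this combines with the usual $(-1)^{m+1}$ coming from the cone dimension to give the sign $(-1)^{m+r}$ in $(\ast)$ after extracting the coefficient of $t^k$.

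The main obstacle is the simplex step: tracking the symmetric-tensor weight $x^{r}$ through the unimodular decomposition and the half-open face book-keeping. Beyond this combinatorial labor the argument runs parallel to the classical proof of Ehrhart--Macdonald reciprocity; the sole new ingredient is the $r$-homogeneity of $x^{r}$ under the antipodal substitution, which is exactly the source of the extra factor $(-1)^{r}$ in the statement. The Khovanskii--Pukhlikov framework already underlying Theorem \ref{tensor_ehrhart} accommodates polynomial weights, so the weighted cone reciprocity needed here is within reach of the same tools.
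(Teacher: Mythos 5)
Your proposal is correct in outline and correctly locates the source of the extra sign: the weight $x^r$ picks up $(-1)^r$ under the antipodal substitution. But it is a genuinely different route from the one taken in the paper. The paper does not lift to cones at all. It takes McMullen's reciprocity theorem (Theorem~\ref{McM_rec}) for $i$-homogeneous translation invariant valuations as a black box and upgrades it to translation covariant tensor valuations by a short induction on the rank $r$ (Theorem~\ref{hom_rec}): using the induction hypothesis on the associated lower-rank functions, one shows that $\widetilde{\oZ}(P)=\oZ^\circ(P)-(-1)^i\oZ(-P)$ is translation invariant, applies Theorem~\ref{McM_rec} to $\widetilde{\oZ}$ itself, and obtains $\widetilde{\oZ}=-\widetilde{\oZ}=0$. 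The factor $(-1)^m$ then comes in cleanly via \eqref{intP}, and the remaining $(-1)^r$ via $\oL^r(-P)=(-1)^r\oL^r(P)$ together with the vanishing of the high and low coefficients (Lemma~\ref{coefficients} and \eqref{L0}). Your cone argument would re-derive the underlying reciprocity from first principles, which is more self-contained but also much heavier: you have to prove a weighted Stanley-type reciprocity with tensor-valued polynomial weights and do the half-open bookkeeping explicitly, whereas the paper gets all of that for free by citing McMullen. One place to tighten your reduction to simplices: the factor $(-1)^{m+r}$ in $(\ast)$ is dimension dependent, so the right-hand side is most transparently a valuation once you rewrite it as $(-1)^r\oL^{r\circ}(kP)$ via \eqref{intP}; in that $\circ$-operator form the passage from simplices to general lattice polytopes is immediate, and it is exactly the form the paper works with.
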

\noindent
In Section~\ref{reciprocity}, we establish reciprocity theorems for general tensor valuations; the above theorem is a special case. We follow the approach of McMullen \cite{McMullen77}.

A third  fundamental result on lattice polytopes is the Betke \& Kneser Theorem  \cite{Betke:Kneser}. It provides a complete classification of $\slnz$ and translation invariant real-valued valuations on $\lpn$ and a characterization of the Ehrhart coefficients. 

\begin{theo}[Betke \!\&\! Kneser]
A functional $\,\oZ\!:\lpn\to \R$ is  an $\,\slnz$ and translation invariant valuation if and only if  there are  $\,
c_0, \dots, c_n\in\R$ such that 
$$\oZ(P) =  c_0  \oL_0(P)+\dots+c_n \oL_n(P)$$
for every $P\in\lpn$. 
\end{theo}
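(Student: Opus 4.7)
The plan is to prove the forward direction by induction on $n$; the converse, that every linear combination $\sum_{i=0}^n c_i\oL_i$ is an $\slnz$ and translation invariant valuation, is immediate from Theorem \ref{Ehrhart}. For $n=0$, the space $\lp{0}=\{\emptyset,\{0\}\}$, so $\oZ$ is determined by $\oZ(\{0\})$ and trivially equals $\oZ(\{0\})\oL_0$.

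For the inductive step, assume the result holds in all dimensions below $n$. Let $T_j=\mathrm{conv}\{0,e_1,\ldots,e_j\}$ denote the standard $j$-simplex in $\R^n$. Since $\oL(k T_j)=\binom{k+j}{j}$ as a polynomial in $k$, we have $\oL_i(T_j)=0$ for $i>j$ and $\oL_j(T_j)=1/j!\neq 0$. Hence one can uniquely choose $c_0,\ldots,c_n\in\R$ so that the valuation $\oZpr:=\oZ-\sum_{i=0}^n c_i\oL_i$, which is again $\slnz$ and translation invariant, satisfies $\oZpr(T_j)=0$ for $j=0,\ldots,n$. It suffices to show $\oZpr\equiv 0$.

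For any lattice polytope $P$ of dimension $k<n$, $\slnz$-equivariance together with translation invariance allows us to place $P$ into the coordinate subspace spanned by $e_1,\ldots,e_k$, which we identify with $\R^k$. By the inductive hypothesis applied in $\lp{k}$, the restriction of $\oZpr$ is a linear combination of $\oL_0,\ldots,\oL_k$ on $\lp{k}$; its vanishing on $T_0,\ldots,T_k$ and the same triangular argument force all coefficients to vanish. Hence $\oZpr$ vanishes on all lattice polytopes of dimension less than $n$. For an arbitrary $P\in\lpn$, one triangulates $P$ into $n$-dimensional lattice simplices using only its existing lattice points, and inclusion-exclusion via the valuation property expresses $\oZpr(P)$ as an alternating sum of values on these $n$-simplices and their lower-dimensional intersections. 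The latter vanish by the preceding step, reducing matters to showing $\oZpr(S)=0$ for every $n$-dimensional lattice simplex $S$.

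This is carried out by a secondary induction on the normalized volume $d(S):=n!\,V_n(S)\in\N$. If $d(S)=1$, then after translating a vertex to the origin the other vertices form a $\Z$-basis of $\Z^n$, so some $\phi\in\slnz$ sends $S$ to $T_n$ and $\oZpr(S)=0$ by construction. If $d(S)>1$, one exploits a lattice point in $S$ beyond its vertices to subdivide $S$ into lattice simplices of strictly smaller normalized volume, meeting along lower-dimensional lattice polytopes; the valuation property, the primary induction on dimension, and the secondary induction on $d(S)$ then yield $\oZpr(S)=0$. The main obstacle is this subdivision step: one must locate an appropriate auxiliary lattice point and verify that every resulting subsimplex has strictly smaller normalized volume. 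This is the technical core of Betke and Kneser's original argument and relies on the fact that when $d(S)>1$ the half-open fundamental parallelepiped spanned by the edge vectors emanating from a vertex of $S$ contains a nontrivial lattice point, which (possibly after an $\slnz$-transformation and translation) can be used to cut $S$ in the required way.
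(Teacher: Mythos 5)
The paper does not prove this theorem; it is quoted from Betke's Habilitationsschrift and from Betke \& Kneser's 1985 paper. What the paper does record from that source is Proposition~\ref{complementation}, which asserts that for every $P\in\lpn$ there are unimodular simplices $S_1,\dots,S_m$ and \emph{integers} $k_1,\dots,k_m$ (not necessarily non-negative) with $\oZ(P)=\sum_i k_i\oZ(S_i)$ for every valuation $\oZ$. The fact that the $k_i$ are allowed to be negative is the footprint of the ``complementation'' technique, and it is exactly the point at which your outline breaks down.

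Your reductions to the triangular system in the $\oL_i(T_j)$, to lower-dimensional polytopes, and to $n$-simplices via the inclusion--exclusion extension are all sound and match the standard architecture. The gap is in the secondary induction on normalized volume. You write that when $d(S)>1$ one ``exploits a lattice point in $S$ beyond its vertices to subdivide $S$.'' This is false in dimension $n\geq 3$: the Reeve tetrahedra $R_r=\mathrm{conv}\{0,e_1,e_2,e_1+e_2+re_3\}$ have $d(R_r)=r$ arbitrarily large yet contain no lattice points other than their four vertices, so they admit no nontrivial lattice dissection at all. Your closing remark correctly notes that a nontrivial lattice point exists in the half-open fundamental parallelepiped of the edge vectors when $d(S)>1$, but such a point generally lies \emph{outside} $S$, so it does not ``cut $S$'' in the sense your induction requires. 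Betke and Kneser handle this by complementation: one adjoins further lattice simplices to $S$ to form a polytope that admits two competing unimodular dissections, extracts a linear relation among the values $\oZ(\cdot)$ on the pieces, and isolates $\oZ(S)$ with a non-zero integer coefficient in terms of simplices of strictly smaller normalized volume. Without this step (or an equivalent reduction), the induction on $d(S)$ does not close, and the forward implication remains unproved.
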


\noindent
The above result was established by Betke \cite{BetkeHabil} and first published in \cite{Betke:Kneser}. In both papers, it was  assumed that the functional is invariant with respect to unimodular transformations where these are defined to be a combination of translations by integral vectors and $\glnz$ transformations; that is, linear transformations with integer coefficients and determinant $\pm1$. The proofs remain unchanged for the $\slnz$ case (see \cite{BoeroeczkyLudwig}).

\goodbreak
The Betke \& Kneser Theorem is a discrete analogue of what is presumably the most celebrated result in the geometric theory of valuations, Hadwiger's Characterization Theorem \cite{Hadwiger:V}.  Let $\cK^n$ denote the space of convex bodies (that is, compact convex sets)  on $\rn$ equipped with the topology coming from the Hausdorff metric.

\begin{theo}[Hadwiger]\label{hugo}
A functional $\,\oZ:\cK^n\to \R$ is  a continuous and rigid motion invariant valuation if and only if  there are $c_0,\ldots, c_n\in\R$ such that 
$$
\oZ(K) =  c_0 \,V_0(K)+\dots+c_n\,V_n(K)$$
for every $K\in\cK^n$. 
\end{theo}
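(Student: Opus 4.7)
The plan is to follow Hadwiger's classical proof by induction on the dimension $n$. Since $\,\oZ$ is continuous and convex polytopes are dense in $\cK^n$ with respect to the Hausdorff metric, it suffices to establish the identity on polytopes. The intrinsic volumes $V_0,\ldots,V_n$ are themselves continuous, rigid motion invariant valuations, so the task reduces to showing that every such valuation lies in their linear span. The base case $n=0$ is immediate, since $\cK^0$ consists only of the empty set and a single point, which are distinguished by $V_0$.

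For the inductive step, suppose the theorem holds in all dimensions less than $n$, and let $\,\oZ\colon\cK^n\to\R$ be a continuous, rigid motion invariant valuation. Fix a hyperplane $H\subset\rn$ through the origin and restrict $\,\oZ$ to convex bodies lying in $H$. This restriction is a continuous, rigid motion invariant valuation on the $(n-1)$-dimensional space $\cK^{n-1}$, so by the induction hypothesis it equals $c_0 V_0+\dots+c_{n-1}V_{n-1}$ for some constants $c_j\in\R$. Rotation invariance of $\,\oZ$ and of the $V_j$ makes the $c_j$ independent of the chosen $H$, and translation invariance then extends the identity to every $K\in\cK^n$ with $\dim K<n$. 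Subtracting gives $\,\oZ'=\oZ-\sum_{j<n}c_j V_j$, a continuous and rigid motion invariant valuation that is moreover \emph{simple}, meaning $\,\oZ'(K)=0$ whenever $\dim K<n$.

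The principal obstacle is now to prove that every continuous, rigid motion invariant, simple valuation on $\cK^n$ is a constant multiple of the $n$-dimensional volume $V_n$; this is the substantive content of Hadwiger's theorem. One route is Hadwiger's original dissection argument, which uses simplicial decompositions, the valuation property, and translation invariance to reduce $\,\oZ'$ to data on a single simplex modulo rigid motions, where symmetry then forces proportionality with volume. A cleaner modern approach goes through Klain's theorem: a simple continuous translation invariant valuation is determined by its Klain function on Grassmannians, and rotation invariance combined with simplicity force this function to be constant, yielding $\,\oZ'=c_n V_n$ for some $c_n\in\R$. Either way, this final step is the real difficulty, while the inductive reduction leading to it is essentially bookkeeping.
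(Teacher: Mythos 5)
The paper does not prove Hadwiger's theorem; it states it as a classical background result, citing Hadwiger's book \cite{Hadwiger:V}, so there is no internal proof to compare against. Judged on its own terms, your proposal is a correct high-level outline of the standard argument — induct on dimension, subtract lower-degree intrinsic volumes to reduce to a simple valuation, and then show a simple, continuous, rigid-motion-invariant valuation is proportional to $V_n$ — but it is an outline, not a proof. As you yourself note, the final step is \emph{the entire substance} of the theorem, and you do not carry it out; you only name two possible strategies (Hadwiger's dissection argument, Klain's theorem). In particular, your description of the Klain route is slightly off: Klain's theorem on the Klain function applies to $k$-homogeneous even valuations, whereas the relevant result for the present purpose is Klain's representation of a simple, continuous, translation-invariant valuation as $\phi(K)=cV_n(K)+\int_{S^{n-1}}f\,dS_{n-1}(K,\cdot)$ with $f$ odd, from which rotation invariance forces $f\equiv0$. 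Also, the opening reduction to polytopes by density is a red herring in the form you present it: you never again restrict to polytopes, and Klain's argument operates directly on $\cK^n$; Hadwiger's dissection proof does use polytopes, but the density argument belongs at the end, not the beginning. None of this invalidates the outline, but the proposal as written leaves a genuine and large gap exactly where the theorem's content lives.
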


\smallskip\noindent
Here $V_0(K),\ldots,V_n(K)$ are the {\em intrinsic volumes} of $K\in\cK^n$, which are classically defined through the Steiner polynomial. That is, for $s\ge 0$,
\begin{equation*}\label{steiner_f}
V_n(K+s \,B^n)= \sum_{j=0}^n s^{n-j} v_{n-j}\, V_j(K),
\end{equation*}
where $B^n$ is the $n$-dimensional Euclidean unit ball with volume $v_n$ and $$K+s\,B^n=\{x+s\,y:x\in K, y\in B^n\}.$$ The Hadwiger Theorem has powerful applications within Integral Geometry and Geometric Probability 
(see \cite{Hadwiger:V, Klain:Rota}). 

Hadwiger's theorem was extended to vector valuations by Hadwiger \& Schneider  \cite{Hadwiger:Schneider}.

\begin{theo}[Hadwiger \& Schneider]
A function $\,\oZ:\cK^n\to \R^n$ is  a continuous, rotation equivariant, and translation covariant valuation if and only if  there are $c_1,\ldots, c_{n+1}\in\R$ such that 
$$
\oZ(K) =  c_1 \oM^{1}_1(K)+\dots+c_{n+1}\oM^{1}_{n+1}(K)$$
for every $K\in\cK^n$. 
\end{theo}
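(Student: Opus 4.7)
The plan is to adapt the classical proof of Hadwiger's Theorem to the vector-valued setting, with the coefficients $\oM^1_1,\dots,\oM^1_{n+1}$ of the Steiner-type expansion of the moment vector playing the role of the intrinsic volumes. Sufficiency is a routine check that each $\oM^1_j$ is continuous, rotation equivariant, and translation covariant, so the work lies in proving uniqueness.

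First, I would establish a homogeneous decomposition $\oZ=\oZ_1+\cdots+\oZ_{n+1}$ by a vector-valued version of McMullen's polynomial expansion theorem. Continuity together with translation covariance forces $\lambda\mapsto\oZ(\lambda K)$ to be polynomial in $\lambda\ge 0$, and the homogeneous components $\oZ_i$ retain continuity, rotation equivariance, and translation covariance. The degree reaches $n+1$ rather than $n$ because translation covariance contributes an extra power, in parallel with Theorem \ref{tensor_ehrhart} in the lattice setting.

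Next, for each $i$, I would exploit translation covariance by expanding $\oZ_i(K+y)-\oZ_i(K)$ as a polynomial in $y$ with coefficients that are themselves valuations of strictly smaller homogeneity and of appropriate tensor valence. Pairing the linear-in-$y$ coefficient with a fixed direction $w\in\sn$ and using rotation equivariance produces a continuous, rigid motion invariant, real-valued valuation of degree $i-1$; Hadwiger's Theorem identifies it as a scalar multiple of the intrinsic volume $V_{i-1}$. Matching this with the corresponding translation expansion of $\oM^1_i$ pins down a unique scalar $c_i$ such that $R_i:=\oZ_i-c_i\,\oM^1_i$ has no linear-in-$y$ translation term. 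Iterating through the higher-order coefficients in $y$ then forces $R_i$ to be translation invariant.

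The final step shows that the residue $R_i$, which is continuous, translation invariant, rotation equivariant, $i$-homogeneous, and $\R^n$-valued, must vanish. Since the only $\mathrm{SO}(n)$-fixed vector in $\R^n$ is zero, rotation equivariance applied to $\mathrm{SO}(n-1)$-symmetric bodies $K$ forces $R_i(K)$ to lie along a distinguished axis and to depend on $K$ only through a scalar rotation-invariant valuation; Hadwiger's Theorem and a density argument via convex combinations then extend $R_i\equiv 0$ to all of $\cK^n$. The main obstacle is this last step: converting vector-valued rotation equivariance into a vanishing statement without invoking the conclusion itself. The cleanest route is to combine the $\mathrm{SO}(n-1)$ reduction above with the fact that rotation-symmetric bodies generate a sufficiently rich subclass under the valuation-continuity structure, so that $R_i$ is determined on all of $\cK^n$ by its values on them.
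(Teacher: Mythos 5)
The paper does not prove this theorem; it states it as background and cites Hadwiger \& Schneider, remarking explicitly that ``the key ingredient in the proof is a characterization of the Steiner point by Schneider.'' Your outline matches the standard architecture for such a classification -- homogeneous decomposition, identification of each translation coefficient as an intrinsic volume via Hadwiger's theorem, and reduction to showing that a translation invariant remainder $R_i$ vanishes -- but it omits precisely the ingredient the paper singles out, and the step it offers in its place does not work.

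The gap is in the final step. Your argument restricts $R_i$ to $\mathrm{SO}(n-1)$-symmetric bodies, notes that $R_i(K)$ must lie on the symmetry axis, and then invokes ``Hadwiger's Theorem and a density argument via convex combinations.'' But the axial-coordinate valuation $K\mapsto R_i(K)\cdot e_n$ is only defined on bodies symmetric about the $e_n$-axis, and it is invariant only under $\mathrm{SO}(n-1)$, not under the full group of rigid motions, so Hadwiger's theorem does not apply to it. Moreover, $\mathrm{SO}(n-1)$-symmetric bodies are not dense in $\cK^n$, and ``convex combinations'' do not produce a density argument; a valuation is not determined by its values on a small non-generating subclass without additional structure (such as Minkowski additivity). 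What makes the reduction close for the crucial degree $i=1$ is exactly Schneider's Steiner point characterization: $R_1$ is $1$-homogeneous, translation invariant, and continuous, hence Minkowski additive (the analogue of Proposition~\ref{linear}), so $R_1+s$, where $s$ is the Steiner point, is a continuous, Minkowski-additive, rigid-motion equivariant map $\cK^n\to\R^n$ and therefore equals $s$; hence $R_1=0$. Without this tool -- or an equivalent spherical-harmonics argument, which is essentially Schneider's proof -- the uniqueness does not go through. The degrees $i\neq 1$ also need a genuine argument (your sketch does not provide one), but the essential missing idea is the Steiner point characterization.
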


\noindent
Here $\oM^1_i(K)=\Phi^{1,0}_i(K)$ are the \emph{intrinsic vectors} of $K$ (see (\ref{steiner_f2}) below). The key ingredient in the proof is  a characterization of the Steiner point by Schneider \cite{Schneider:steiner}.

Correspondingly, we obtain the following classification theorem for $n\ge 2$.

\begin{theorem}\label{vector}
A function $\,\oZ\!:\lpn\to \R^n$ is an $\,\slnz\!$ equivariant and translation covariant valuation  if and only if  there are $c_1,\dots, c_{n+1}\in\R$ such that $$\oZ(P) =  c_1 \oL^1_1(P)+\dots+c_{n+r} \oL^1_{n+1}(P)$$
for every $P\in\lpn$. 
\end{theorem}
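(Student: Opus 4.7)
The sufficiency direction follows at once from Theorem~\ref{tensor_ehrhart}, since each Ehrhart tensor $\oL^1_i$ is an $\slnz$ equivariant and translation covariant valuation. For the converse, let $\oZ\colon\lpn\to\R^n$ be such a valuation. The first step reduces the problem to the translation invariant case. Translation covariance for a rank-one valuation yields a scalar function $\oZ_0\colon\lpn\to\R$ with
\[
\oZ(P+y)=\oZ(P)+y\,\oZ_0(P)
\]
for every $y\in\Z^n$ and $P\in\lpn$. A short computation using the valuation axiom and $\slnz$ equivariance of $\oZ$ shows that $\oZ_0$ is itself an $\slnz$ and translation invariant scalar valuation, so the Betke \& Kneser Theorem produces constants $d_0,\dots,d_n\in\R$ with $\oZ_0=\sum_{j=0}^{n} d_j\,\oL_j$. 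Expanding $\oL^1(k(P+y))$ via Theorem~\ref{tensor_ehrhart} and matching coefficients of $k$ delivers the identity $\oL^1_i(P+y)=\oL^1_i(P)+y\,\oL_{i-1}(P)$ for $i=1,\ldots,n+1$, so the choice $c_i:=d_{i-1}$ makes
\[
\oZpr:=\oZ-\sum_{i=1}^{n+1} c_i\,\oL^1_i
\]
a translation invariant, $\slnz$ equivariant, vector-valued valuation. It therefore suffices to show every such $\oZpr$ vanishes identically.

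For this classification, I would proceed by induction on the dimension of the polytope and handle the standard lattice simplices $T^k=\operatorname{conv}\{0,e_1,\dots,e_k\}$ in order. The base case is immediate: since the standard representation of $\slnz$ on $\R^n$ is irreducible for $n\ge 2$, the $\slnz$-fixed vector $\oZpr(\{0\})$ must vanish, and translation invariance extends this to every singleton. For $1\le k\le n-1$ the stabilizer of $T^k$ in $\slnz$ contains the permutations of $e_1,\dots,e_k$ together with the elementary shears $I+m\,E_{ij}$ with $i\le k<j$; these constraints force $\oZpr(T^k)$ to be a scalar multiple of $e_1+\cdots+e_k$. The residual scalar is annihilated by a \emph{translation-hidden symmetry}: one exhibits $\psi\in\slnz$ and $w\in\Z^n$ with $\psi T^k=T^k+w$, so that translation invariance together with $\slnz$ equivariance forces $\psi\,\oZpr(T^k)=\oZpr(T^k)$, which when combined with the previous span constraint leaves only $\oZpr(T^k)=0$. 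For $k=n$ the stabilizer in $\slnz$ is only the alternating group $A_n$, but the same translation-hidden-symmetry device --- now operating on the full $\R^n$ --- should still produce the vanishing of $\oZpr(T^n)$.

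With vanishing secured on every standard simplex, the Betke \& Kneser dissection scheme propagates the result to the whole class: every lattice polytope admits a unimodular triangulation into simplices that are, up to translation, $\slnz$-images of some $T^k$, and inclusion-exclusion together with $\slnz$ equivariance forces $\oZpr\equiv 0$ on $\lpn$. The step I expect to be the genuine obstacle is producing the translation-hidden symmetry in the top-dimensional case $k=n$: the stabilizer $A_n$ pins $\oZpr(T^n)$ only to the one-dimensional line through $e_1+\cdots+e_n$, so one must identify an $\slnz$ element that maps $T^n$ to a lattice translate of itself while acting non-trivially on this line, a verification whose nature varies with the parity of $n$ and which cannot be extracted from $\slnz$ equivariance alone.
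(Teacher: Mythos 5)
Your reduction to the translation invariant case---passing to the scalar associated function $\oZ^0$, applying the Betke \& Kneser theorem, subtracting the resulting combination of the $\oL^1_i$, and checking via $\oL^1_i(P+y)=\oL^1_i(P)+\oL_{i-1}(P)\,y$ that the remainder is translation invariant---matches the paper's opening move exactly. The divergence is in what happens next. The paper observes that the translation invariant remainder $\widetilde{\oZ}$ makes $\widetilde{\oZ}+\oL^1_1$ an $\slnz$ and translation \emph{equivariant} valuation, and then concludes in one line from the Böröczky--Ludwig characterization of the discrete Steiner point (Theorem~\ref{dst}), which is imported from \cite{BoeroeczkyLudwig}; there is no further case analysis.

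Your proposed alternative---establish $\oZpr(T_k)=0$ for all $k$ via stabilizer constraints and ``translation-hidden'' symmetries, then invoke Corollary~\ref{BetkeKneserinvariance}---would, if completed, amount to reproving the Böröczky--Ludwig theorem from scratch, which is a legitimate and more self-contained route. But as written it has a genuine gap precisely at the point you flag: you never exhibit the required $\psi\in\slnz$ and $w\in\Z^n$ with $\psi T_n=T_n+w$, nor verify that the fixed-vector space of $\psi$ misses the diagonal. That verification is not automatic and is parity-sensitive: the cyclic affine symmetry of the simplex has linear part of determinant $(-1)^n$, so it lands in $\slnz$ only for $n$ even; for $n$ odd one must instead choose an even permutation of the $n+1$ vertices (e.g.\ a product of two transpositions) and check it still moves $e_1+\cdots+e_n$. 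Also, for $n=2$ the stabilizer $A_2$ is trivial and pins $\oZpr(T_2)$ to nothing at all, so the translation-hidden symmetry must carry the entire load there (it does: $e_1\mapsto e_2-e_1$, $e_2\mapsto -e_1$ sends $T_2$ to $T_2-e_1$ and has no nonzero real fixed vector). These omitted verifications are the crux, so the proof as submitted is incomplete; the paper's route through Theorem~\ref{dst} avoids them entirely.
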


\noindent
The proof is based on a characterization \cite{BoeroeczkyLudwig} of the discrete Steiner point. For $n=1$, the characterization follows from the Betke \& Kneser Theorem as only translation covariance has to be considered. Therefore, we assume $n\geq 2$ and $r\ge 1$ for the remainder of the paper.

\goodbreak
The theorems by Hadwiger and Hadwiger \& Schneider were extended by Alesker \cite{Alesker00a,Alesker98} (based on \cite{Alesker99}) to  a classification of continuous, rotation equivariant, and translation covariant tensor valuations on $\cK^n$  involving extensions of the intrinsic volumes. Just as the intrinsic volumes can be obtained from the Steiner polynomial, the moment tensor $\oM^r$ satisfies the Steiner formula 
\begin{equation}\label{steiner_f2}
\oM^r(K+s \,B^n)= \sum_{j=0}^{n+r} s^{n+r-j} v_{n+r-j} \,\sum_{k\ge 0} \Phi^{r-k,k}_{j-r+k}
\end{equation}
for $K\in\cK^n$ and $s\ge0$.
The coefficients $\Phi_k^{r,s}$ are called the {\em Minkowski tensors} (see \cite[Section~5.4]{Schneider:CB2}).
Let $Q\in\T^2$ be the metric tensor, that is, $Q(x,y)= x\cdot y$ for $x,y\in\R^n$. 

\begin{theo}[Alesker]
A function $\,\oZ:\cK^n\to \T^r$ is  a continuous, rotation equivariant, and translation covariant valuation if and only if  $\,\oZ$  can be written as linear combination of the symmetric tensor products
$Q^l \,\Phi_k^{m,s}$ with  $2l+m+s=r$.
\end{theo}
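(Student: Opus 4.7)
The ``if'' direction is immediate: each Minkowski tensor $\Phi_k^{m,s}$ is continuous, rotation equivariant, and translation covariant since it arises as a coefficient in the Steiner expansion \eqref{steiner_f2}, and the metric tensor $Q$ is rotation invariant; symmetric tensor products and linear combinations preserve all three properties.

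For the ``only if'' direction, I would adapt Alesker's strategy in three reductions. First, translation covariance forces $x\mapsto \oZ(K+x)$ to be a $\T^r$-valued polynomial in $x\in\R^n$ of degree at most $r$ (the tensor analogue of McMullen's polynomial decomposition for translation covariant valuations). Extracting coefficients against symmetric powers $x^s$ produces a family of continuous, rotation equivariant, translation \emph{invariant} valuations on $\cK^n$ with values in $\T^{r-s}$. Second, apply McMullen's homogeneous decomposition to each such component to obtain continuous, rotation equivariant, translation invariant, $i$-homogeneous $\T^{r-s}$-valued valuations for $i=0,\ldots,n$. Together, these reductions replace the original problem by the task of classifying, for each pair $(i,s)$, the space of continuous, translation invariant, rotation equivariant, and $i$-homogeneous $\T^{s}$-valued valuations on $\cK^n$.

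Third, decompose the target $\T^s$ under the $O(n)$-action into isotypic components, each of the form $Q^l$ times a space of harmonic (trace-free) symmetric tensors. Combining Schur's lemma with Alesker's irreducibility theorem for continuous translation invariant valuations on $\cK^n$ with values in a finite-dimensional $GL(n)$-representation, one shows that the space of $O(n)$-equivariant valuations landing in each isotypic piece is finite dimensional and can be counted. One then exhibits the Minkowski tensors $\Phi_k^{m,s}$ coming from \eqref{steiner_f2} as an explicit spanning family, multiplied by the relevant powers of $Q$, and verifies their linear independence by evaluating on well-chosen test bodies such as balls, segments, and simplices.

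The main obstacle is this final representation-theoretic step: the upper bound on the dimension of continuous, $O(n)$-equivariant, translation invariant, $i$-homogeneous tensor valuations. This is where Alesker's deep irreducibility theorem (or, alternatively, a delicate Klain-functional argument combined with induction on the ambient dimension via restriction to hyperplanes) is essential. Once the bound is established, the explicitly constructed Minkowski tensors exhaust the space and yield the stated classification.
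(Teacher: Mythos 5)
The paper does not prove this theorem; it is quoted as known background from Alesker's work (\cite{Alesker98, Alesker99, Alesker00a}), so there is no ``paper's proof'' to compare your sketch against. Evaluating the sketch on its own terms, the overall plan is a reasonable outline of the route taken in the literature: peel off the translation covariance to pass to translation invariant components, decompose by degree of homogeneity via McMullen's decomposition, and then classify $O(n)$-equivariant $i$-homogeneous translation invariant tensor valuations by representation theory, with the Minkowski tensors $Q^l\Phi_k^{m,s}$ as explicit representatives. The ``if'' direction is indeed immediate from the Steiner-type formula \eqref{steiner_f2}.

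There are, however, two concrete issues. First, your final step proposes to ``verify linear independence'' of the tensors $Q^l\Phi_k^{m,s}$ by evaluating on test bodies, but this step would \emph{fail}: as the paper notes immediately after the theorem statement, there are non-trivial linear relations (syzygies) among the tensors $Q^l\Phi_k^{m,s}$ with $2l+m+s=r$, so this family is not linearly independent. The correct goal is that the family \emph{spans} the relevant space; the dimension count must be run against the number of syzygies, not against the naive count of parameters $(l,m,s,k)$. Second, you invoke ``Alesker's irreducibility theorem'' as the representation-theoretic engine. That theorem postdates the tensor-valuation classification \cite{Alesker00a}; Alesker's actual proof of the rotation-equivariant case \cite{Alesker99} proceeds via a direct analysis using spherical harmonics and a theorem of Schneider on support functions, not via the $\operatorname{GL}(n)$-irreducibility theorem. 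Using the irreducibility theorem is a legitimate alternative modern route, but as stated your sketch leaves the hardest step (the dimension upper bound) entirely to a black box without either carrying out the $O(n)$-isotypic analysis or accounting for the syzygies, so it does not amount to a proof.
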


\noindent
We remark that there are linear relations, called syzygies, between the tensors described above and that the dimension of the space of continuous, rotation equivariant and translation covariant matrix valuations  $\,\oZ:\cK^n\to \T^2$ is $3n+1$ for $n\ge 2$ (see \cite{Alesker99}).

\goodbreak
For tensor valuations of rank up to eight, we obtain the following complete classification. For $r\ge3$, symbolic computation is used in the proof to show that certain matrices are non-singular.

\begin{theorem}\label{tensor}
For $2\le r\le 8$, a function $\,\oZ\!:\lpn\to \T^r$ is an $\,\slnz\!$ equivariant and translation covariant valuation  if and only if  there are $c_1,\dots, c_{n+r}\in\R$ such that $$\oZ(P) =  c_1 \oL^r_1(P)+\dots+c_{n+r} \oL^r_{n+r}(P)$$
for every $P\in\lpn$. 
\end{theorem}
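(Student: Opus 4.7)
The plan is to follow the Betke--Kneser strategy, adapted to the tensor setting. The first step is to apply the homogeneous decomposition for general tensor valuations (developed in Section~\ref{sec_ehrhart}) to write $\oZ = \oZ_1 + \cdots + \oZ_{n+r}$, where each $\oZ_i\colon \lpn \to \T^r$ is an $i$-homogeneous, $\slnz$ equivariant, translation covariant valuation. Since the Ehrhart tensors $\oL^r_i$ are themselves $i$-homogeneous, the task reduces to showing that the space of such $\oZ_i$ is one-dimensional, spanned by $\oL^r_i$.

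Next, I would unwind translation covariance. Expanding $\oZ_i(P+y)$ as a polynomial in $y \in \Z^n$ produces coefficients $\oZ_i^{(k)}\colon \lpn \to \T^{r-k}$ that are themselves $\slnz$ equivariant, translation covariant valuations (a routine check using the polynomial identity principle). By induction on the rank, with the Betke--Kneser Theorem as the base case $r=0$ and Theorem~\ref{vector} as the case $r=1$, the coefficients $\oZ_i^{(k)}$ for $k \ge 1$ are linear combinations of Ehrhart tensors of smaller rank. Comparing against the translation expansion of the Ehrhart tensors $\oL^r_j$, I can subtract an appropriate linear combination from $\oZ_i$ to reduce to the case where all the lower-rank coefficients vanish, i.e.\ $\oZ_i$ is strictly translation invariant.

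With $\oZ_i$ now translation invariant, $\slnz$ equivariant, and $i$-homogeneous, a Betke--Kneser-style triangulation argument reduces the determination of $\oZ_i$ to its values on the standard simplex $T^n$ with vertex set $\{0,e_1,\ldots,e_n\}$ and on lower-dimensional unimodular simplices. The combined affine stabilizer of $T^n$ in $\slnz \ltimes \Z^n$ acts on $\T^r$ by the induced tensor representation, so $\oZ_i(T^n)$ must lie in the corresponding fixed subspace. Further constraints arise from applying the valuation property to explicit subdivisions of dilates $kT^n$ and matching against the $i$-homogeneous Ehrhart expansion; together these produce an explicit finite-dimensional linear system on $\oZ_i(T^n)$, whose coefficients are concrete combinatorial numbers.

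The main obstacle is the final rank computation: verifying, for each $i$ and each $2 \le r \le 8$, that this linear system is rigid enough to force $\oZ_i$ to be a scalar multiple of $\oL^r_i$. The matrices grow rapidly with $r$ and their entries involve binomial coefficients and Ehrhart-type data, so the ranks are not amenable to a closed-form analysis and must be checked by symbolic computation, as the authors emphasize. This is precisely the step that breaks down at $r = 9$, where a new solution emerges outside the span of the Ehrhart tensors and the classification fails.
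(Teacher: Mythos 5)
Your proposal tracks the paper's overall strategy closely: homogeneous decomposition, induction on rank via the associated functions to reduce to a translation-invariant valuation, Betke--Kneser reduction to unimodular simplices, and symbolic rank computations, which is indeed the step that fails at $r=9$. The order of the first two reductions is swapped relative to the paper (the paper inducts on rank first and defers the homogeneous decomposition to Corollary~\ref{trans_inv}), but that is cosmetic. The more substantive point you elide is the reduction, within each translation-invariant homogeneous piece, to \emph{simple} valuations by induction on the dimension $n$ (using Lemma~\ref{high_coord}): one subtracts a multiple of $\oL_1^r$ so that the difference vanishes on every lower-dimensional lattice polytope, whereupon Corollary~\ref{BetkeKneserinvariance} pins everything down by $\oZ(T_n)$ alone; without this step your linear system would need to control $\oZ_i(T_j)$ for every $j\le n$ simultaneously. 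The concrete source of the paper's linear constraints is also different: it is the dissection of the $2$-cylinder $T_{n-1}+[0,e_n]$ into $n$ unimodular simplices (together with, in the planar case, an even/odd decomposition under $\glz{2}$ and a reciprocity argument), rather than subdivisions of dilates $kT_n$. Finally, one must separately show that the translation-invariant $i$-homogeneous pieces with $2\le i\le n$ vanish outright; the paper does this via Proposition~\ref{trans_van} and a further dimension induction in Proposition~\ref{all_hom}.
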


\noindent While the Betke \& Kneser Theorem looks similar to the Hadwiger Theorem and Theorem~\ref{vector} looks similar to the Hadwiger \& Schneider Theorem, the similarity between the discrete  and  continuous cases breaks down for rank $r=2$, as corresponding spaces have even different dimensions.
For $n=2$ and $r=9$, there exists a new $\slz{2}$ equivariant and translation invariant valuation which is  not a linear combination of the Ehrhart tensors; it is described in Section~\ref{new}. Hence, we do not expect that a classification similar to Theorem \ref{tensor} continues to hold for $r\ge 9$. 

Additionally, we obtain a classification of translation covariant and $(n+r)$-homogeneous tensor valuations on $\lpn$ for $r\ge1$  in Theorem~\ref{hom_char} which provides a characterization of the moment tensor. The scalar case of this result corresponds to Hadwiger's classification  \cite[Satz~XIV]{Hadwiger:V} of translation invariant and $n$-homogeneous valuations on convex polytopes while the case of tensors of general rank $r$ corresponds to McMullen's  classification \cite{McMullen:isometry} of continuous, translation covariant, and  $(n+r)$-homogeneous tensor valuations on convex bodies.

\goodbreak

\section{Preliminaries}\label{tools}

For quick later reference, we aggregate most of the basics into this section and refer the reader, for more general reference, to \cite{BeckRobins, Barvinok2008, Gruber}. 

Our setting will be the $n$-dimensional Euclidean space, $\rn$, equipped with the scalar product $x\cdot y$, for $x,y\in\rn$, to identify $\rn$ with its dual space. The identification of $\rn$ with its dual space allows us to regard each symmetric $r$-tensor as a symmetric $r$-linear functional on $(\rn)^r$. Let
$\T^r$ denote the vector space of symmetric tensors of rank $r$ on
$\R^n$. We will also write this as $\T^r(\R^n)$ if we want to stress the vector space in which we are working. 

\goodbreak
The symmetric tensor product of tensors $A_i\in \T^{r_i}$ for $i=1,\dots,k$  is
\[
A_1\odot\dots\odot A_k (v_1,\dots, v_r)=\frac{1}{r!}\,\sum_\sigma A_{1}\otimes\dots\otimes A_{k}(v_{\sigma(1)}, \dots, v_{\sigma(r)})
\]
for $v_1,\dots, v_r\in\R^n$
where $r=r_1+\dots+r_k$, the ordinary tensor product is denoted by $\otimes$, and we sum over all of the permutations of $1,\dots,r$.
We use the abbreviated notation 
$AB=A\odot B$.
Specifically, the $r$-fold symmetric tensor product of $x\in\rn$ will be written as
\[
x^r=x\odot\dots\odot x.
\]
Symmetry is inherent here; so this is equal to the $r$-fold tensor product. Note that, for $x\in\rn$, its $r$-fold symmetric tensor product is
\[
x^r(v_1,\dots,v_r)=(x\cdot v_1)\cdots(x\cdot v_r)
\]
for $v_1,\dots v_r\in\rn$. We also define $x^0=1$ whenever $x\neq 0$. 

\goodbreak
Applying this to the discrete moment tensor, in particular, gives us 
\begin{equation*}
  \oL^r(P) (v_1, \dots, v_r) = \frac1{r!} \sum_{x\in P\cap\Z^n} (x\cdot v_{1}) \cdots (x\cdot v_{r})
\end{equation*}
for $v_1,\dots, v_r\in\R^n$. For the discrete moment tensor $\oL^r:\lpn \to \T^r$, the action of $\slnz$ is observed to be
\begin{equation*}
  \oL^r(\phi P) (v_1, \dots, v_r) = \oL^r(P) (\phi^t v_1, \dots, \phi^t v_r)
\end{equation*}
for $P\in\lpn$ and $\phi\in\slnz$ where $\phi^t$ is the transpose of $\phi$. In
general, a function $\oZ: $~$\lpn\to$~$ \T^r$ is
said to be {\em $\slnz$ equivariant} if
\begin{equation*}
  \oZ(\phi P) (v_1, \dots, v_r) = \oZ(P) (\phi^t v_1, \dots, \phi^t v_r)
\end{equation*}
for $v_1, \dots,v_r\in\R^n$, $\phi\in\slnz$, and $P\in\lpn$. 
We will write this as $\oZ(\phi P)= \oZ(P)\circ \phi^{t}$. We use the term $\slnz$ equivariance in order to stay consistent with the vector-valued case and note that for $x_1, \dots, x_r\in\R^n$, we have
$$\phi (x_1\odot \dots \odot x_r)= \phi\, x_1 \odot \dots \odot \phi \,x_r $$
for $\phi\in \slnz$.

Using the standard orthonormal basis of $\R^n$, which we denote as $(e_1,\dots,e_n)$, we show that an $\slnz$ equivariant tensor valuation defined on a lower dimensional lattice polytope is completely determined by its lower dimensional coordinates. 
 The precise statement is given as the following lemma. For $A\in \T^r$ and $r_j\in\N$ with $r_1+\dots+r_m=r$, we write $A(e_1[r_1], \dots, e_m[r_m])$ for $A(e_1,\dots,e_1, \dots, e_m, \dots, e_m)$ with $e_j$ appearing $r_j$ times for $j=1,\dots, m$. We identify the subspace of lattice polytopes lying in the span of $e_1, \dots, e_{m-1}$ with $\lp{m-1}$ and set $\Z^0=\{0\}$. 

\goodbreak
\begin{lemma} \label{high_coord}
If $\,\oZ:\lpn\to\T^r$ is $\slnz$ equivariant, then
\[
\oZ(P)(e_1[r_1],\dots,e_{m-1}[r_{m-1}],e_{m}[r_m])=0
\]
for every $P\in\lp{m-1}$ whenever $r_{m}>0$ and $r_1+\dots+r_m=r$.
\end{lemma}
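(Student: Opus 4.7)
The strategy is to apply $\slnz$ equivariance with a carefully chosen transvection $\phi \in \slnz$ that fixes $P$ setwise, and then extract the claimed vanishing by a polynomial-identity argument in the parameter defining $\phi$. The hypothesis $n \ge 2$ is what allows the choice of an auxiliary index $j \in \{1, \ldots, n\} \setminus \{m\}$.

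For such a $j$ and every $c \in \Z$, let $\phi_c \in \slnz$ be the transvection sending $e_m \mapsto e_m + c\, e_j$ and fixing every other basis vector. Since $j \ne m$, this is a unipotent integer matrix of determinant $1$. As $\phi_c$ fixes each of $e_1, \ldots, e_{m-1}$, it fixes $\operatorname{span}(e_1, \ldots, e_{m-1}) \supseteq P$ pointwise, so $\phi_c P = P$. Its transpose satisfies $\phi_c^t e_j = e_j + c\, e_m$ and fixes every other basis vector, and equivariance thus yields
\[
\oZ(P)(v_1, \ldots, v_r) = \oZ(P)(\phi_c^t v_1, \ldots, \phi_c^t v_r).
\]

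Suppose first $m < n$ and take $j = m + 1$. Apply the equivariance identity to the tuple consisting of $r_\ell$ copies of $e_\ell$ for each $\ell < m$, then $r_m - 1$ copies of $e_m$ (admissible since $r_m \ge 1$), and a single $e_j$. Only the $e_j$ slot is affected by $\phi_c^t$; writing $W$ for the linear functional $v \mapsto \oZ(P)(e_1[r_1], \ldots, e_m[r_m - 1], v)$, the identity reads $W(e_j) = W(e_j + c\, e_m) = W(e_j) + c\, W(e_m)$, so $c\, W(e_m) = 0$ for every $c \in \Z$. Since $W(e_m) = \oZ(P)(e_1[r_1], \ldots, e_m[r_m])$, the claim follows. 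For the boundary case $m = n$ we instead take $j = 1$ and apply the identity to the tuple with $r_1 + 1$ copies of $e_1$, $r_\ell$ copies of $e_\ell$ for $2 \le \ell < n$, and $r_n - 1$ copies of $e_n$. Multilinear expansion of the right-hand side then produces a polynomial in $c$ whose coefficient of $c^1$ equals $(r_1 + 1)\, \oZ(P)(e_1[r_1], \ldots, e_n[r_n])$; since the polynomial must vanish identically in $c$, the claim again follows.

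The main difficulty is identifying the right transvection---one that fixes $P$ while its transpose alters an auxiliary basis vector by a controlled multiple of $e_m$, so that the coefficient of the linear parameter isolates exactly $\oZ(P)(e_1[r_1], \ldots, e_m[r_m])$. Once this is in hand, the remainder is a direct polynomial-identity computation.
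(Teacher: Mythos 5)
Your proof is correct. It rests on the same basic mechanism as the paper's proof (a transvection fixing $P$ setwise, followed by multilinear expansion under equivariance), but the execution is noticeably different, and in my view slightly cleaner. The paper splits into $m=1$ versus $m\geq 2$. For $m=1$, it uses the transvection $e_1\mapsto e_1+e_2$ and a one-step argument; for $m\geq 2$, it uses $\phi\colon e_m\mapsto e_1+e_m$ (so $\phi^t$ disturbs the in-plane slot $e_1$) and then inducts on $r_1$, relying on the induction hypothesis to annihilate the higher-order terms in the binomial expansion of $(e_1+e_m)^{r_1+1}$. Your case split is $m<n$ versus $m=n$: when $m<n$ you exploit the extra dimension to use an auxiliary slot $e_{m+1}$ that does not appear among $e_1,\dots,e_m$, so only one slot is perturbed and the conclusion falls out linearly in $c$ with no induction for any $r_1$; when $m=n$, you are forced to perturb the in-plane slot $e_1$, but the integer parameter $c$ lets you read off the vanishing of the $c^1$-coefficient in the polynomial identity directly, again avoiding the induction that the paper performs. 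What the paper's route buys is a uniform choice of transvection direction ($e_1$) for all $m\geq 2$, at the cost of induction on $r_1$; what your route buys is the elimination of that induction by either enlarging the ambient dimension used (when $m<n$) or exploiting the one-parameter family $\phi_c$ (when $m=n$). Both are sound; yours is arguably the more transparent organization.
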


\begin{proof}
If $m=1$, then we  consider $\phi\in\slnz$ that maps $e_{1}$ to $e_1+e_{2}$ and $e_j$ to $e_j$ for $j>1$.  For $P=\{0\}$, we have
\begin{align*}
\oZ(P)(e_1[r-1],e_2)&=\oZ(\phi P)(e_1[r-1],e_2)\\
&=\oZ(P)(e_1[r-1],e_1+e_2)\\
&=\oZ(P)(e_1[r])+\oZ(P)(e_1[r-1],e_2)
\end{align*}
yielding the result.

So, let $m\ge 2$. The proof is by induction on $r_1\ge0$.  Consider the linear transformation $\phi\in\slnz$ that maps $e_{m}$ to $e_1+e_{m}$ and maps $e_j$ to $e_j$ for all $j\neq m$. Any lattice polytope $P\in\lp{m-1}$ is invariant with respect to the map $\phi$ yielding
\begin{align*}
\oZ(P)(e_1,e_2[r_2],\dots,e_m[r_m\!-\!1])&=\oZ(\phi P)(e_1,e_2[r_2],\dots,e_m[r_m\!-\!1])\\
&=\oZ(P)(e_1+e_m,e_2[r_2],\dots,e_m[r_m\!-\!1])\\
&=\oZ(P)(e_1,e_2[r_2],\dots,e_m[r_m\!-\!1])+\oZ(P)(e_2[r_2],\dots,e_m[r_m])
\end{align*}
for any integers $r_2,\dots,r_m\geq0$ with $r_2+\dots+r_m=r$. Hence we have proved the statement for $r_1=0$. 

Let $r_1>0$ and suppose the statement holds for $r_1-1$. Then the equation 
\begin{align*}
\oZ(P)&(e_1[r_1+1], e_2[r_2],\dots,e_m[r_m])\\[3pt]
&=\oZ(\phi P)(e_1[r_1+1], e_2[r_2],\dots,e_m[r_m])\\[3pt]
&=\oZ(P)(e_1+e_m[r_1+1],e_2[r_2],\dots,e_m[r_m])\\
&=\sum_{l=0}^{r_1+1}\binom{r_1+1}{l}\oZ(P)(e_1[r_1+1-l],e_2[r_2],\dots,e_m[r_m+l])\\
&=\oZ(P)(e_1[r_1+1],e_2[r_2],\dots, e_m[r_m])+(r_1+1)  \oZ(P)(e_1[r_1],e_2[r_2],\dots, e_m[r_m+1])
\end{align*}
shows that $\oZ(P)(e_1[r_1],e_2[r_2],\dots,e_m[r_m+1])=0$, which completes the proof by induction.
\end{proof}

\goodbreak
Next, we look at the behavior of the discrete moment tensor $\oL^r$ with respect to translations.
For $y\in\Z^n$, we have
\begin{equation*}
  \oL^r(P+y)=\sum_{j=0}^r \oL^{r-j}(P) \frac{y^{j}}{j!},
\end{equation*}
where on the right side we sum over symmetric tensor products.  In accordance with McMullen
\cite{McMullen:isometry}, a valuation $\oZ: \lpn \to
\T^r$ is called \emph{translation covariant} if there exist associated
functions $\oZ^j: \lpn \to \T^j$ for $j=0,\dots, r$
such that
\begin{equation*}
  \oZ(P+y) =\sum_{j=0}^r \oZ^{r-j}(P) \frac{y^{j}}{j!}
\end{equation*}
for all $y\in\Z^n$ and $P\in\lpn$. 

\goodbreak
Certain essential properties of $\oZ$ are inherited by its associated functions. These can be seen by a comparison of the coefficients in the polynomial expansion of $\oZ$ evaluated at a translated lattice polytope. The following proposition gives the first of these properties. It was proven in \cite{McMullen:isometry} for tensor valuations on convex bodies and is included here for completeness. 

\goodbreak
\begin{proposition}\label{assoc1}
If $\,\oZ:\lpn\to\T^r$ is a translation covariant valuation with associated functions $\oZ^0, \dots, \oZ^r$, then, for $j=0, \dots, r$, the associated function $\oZ^{r-j}$ is a translation covariant valuation with the same associated functions as $\oZ$, that is,
\begin{equation*}
  \oZ^{r-j}(P+y) =\oZ^{r-j}(P)+\cdots+\oZ^0(P)\frac{y^{r-j}}{(r-j)!}
\end{equation*}
for all $y\in\Z^n$ and $P\in\lpn$.
\end{proposition}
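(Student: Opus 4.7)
The strategy is a uniqueness-of-coefficients argument applied to the associativity $(P+y_1)+y_2=P+(y_1+y_2)$ in the semigroup of integer translates. I would begin with a uniqueness lemma: if $\sum_{j=0}^{r} A_j\,y^{j}/j!=0$ in $\T^r$ for every $y\in\Z^n$, with $A_j\in\T^{r-j}$, then every $A_j$ vanishes. Pairing with $v^r$ for $v\in\R^n$ turns this into $\sum_j A_j(v^{r-j})(y\cdot v)^j/j!=0$, a polynomial of degree at most $r$ in the integer values $y\cdot v$; choosing $y$ along a lattice direction so that $y\cdot v$ assumes infinitely many values forces each scalar $A_j(v^{r-j})$ to vanish, and polarization recovers $A_j=0$.

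Next I would apply the definition of translation covariance in two ways. Expanding through the single translation $y_1+y_2$ and using the symmetric binomial identity $(y_1+y_2)^j=\sum_{i=0}^{j}\binom{j}{i}\,y_1^{j-i}y_2^{i}$, then reindexing by the total power of $y_2$,
\begin{equation*}
\oZ\bigl(P+(y_1+y_2)\bigr)=\sum_{k=0}^{r}\frac{y_2^{k}}{k!}\sum_{i=0}^{r-k}\oZ^{r-k-i}(P)\,\frac{y_1^{i}}{i!}.
\end{equation*}
On the other hand, treating $P+y_1$ as a fixed lattice polytope and translating by $y_2$,
\begin{equation*}
\oZ\bigl((P+y_1)+y_2\bigr)=\sum_{k=0}^{r}\oZ^{r-k}(P+y_1)\,\frac{y_2^{k}}{k!}.
\end{equation*}
With $P$ and $y_1$ fixed, these tensor-valued polynomials in $y_2$ agree on every $y_2\in\Z^n$, so the uniqueness lemma yields
\begin{equation*}
\oZ^{r-k}(P+y_1)=\sum_{i=0}^{r-k}\oZ^{r-k-i}(P)\,\frac{y_1^{i}}{i!}, \qquad k=0,\dots,r.
\end{equation*}
Renaming $k$ as $j$ produces the stated expansion for $\oZ^{r-j}$ and identifies its associated functions as $\oZ^{r-j},\oZ^{r-j-1},\dots,\oZ^{0}$, i.e.\ the tail of the original list.

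To finish I would verify the valuation axioms for each $\oZ^{r-j}$. Applying $\oZ$ to $P+y$, $Q+y$, $(P\cup Q)+y$, and $(P\cap Q)+y$ whenever $P,Q,P\cup Q,P\cap Q\in\lpn$ (so that their common translates again lie in $\lpn$), expanding through translation covariance, and invoking the uniqueness lemma in the variable $y$, one obtains $\oZ^{r-j}(P)+\oZ^{r-j}(Q)=\oZ^{r-j}(P\cup Q)+\oZ^{r-j}(P\cap Q)$; the condition $\oZ^{r-j}(\emptyset)=0$ falls out of $\oZ(\emptyset+y)=\oZ(\emptyset)=0$ together with the same uniqueness. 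The only genuine obstacle is keeping the reindexing in the binomial expansion straight; once the uniqueness lemma is in place the argument is mechanical.
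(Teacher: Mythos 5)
Your proof is correct and follows essentially the same strategy as the paper: expand $\oZ(P+y_1+y_2)$ in two ways and compare coefficients of the polynomial in the second translation vector, and similarly compare coefficients to inherit the valuation property. The paper's proof performs these same coefficient comparisons but takes the uniqueness of polynomial coefficients for granted; your explicit uniqueness lemma (pairing with $v^r$, reducing to a scalar polynomial in $y\cdot v$, then polarizing) is simply a more careful justification of a step the paper treats as routine.
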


\goodbreak
\begin{proof}
We compare coefficients in the polynomial expansion in the trans\-lation vector $y\in\Z^n$. Since $\oZ$ is a valuation, we have
\begin{align*}
\sum_{j=0}^r\oZ^{r-j}(P\cup Q)\frac{y^{j}}{j!}&=\oZ((P\cup Q)+y)=\oZ((P+y)\cup(Q+y))\\[-4pt]
&=\oZ(P+y)+\oZ(Q+y)-\oZ((P+y)\cap(Q+y))\\[6pt]
&=\oZ(P+y)+\oZ(Q+y)-\oZ((P\cap Q)+y)\\
&=\sum_{j=0}^r\oZ^{r-j}(P)\frac{y^{j}}{j!}+\sum_{j=0}^r\oZ^{r-j}(Q)\frac{y^j}{j!}
-\sum_{j=0}^r\oZ^{r-j}(P\cap Q)\frac{y^j}{j!}.
\end{align*}
Hence the associated functions of $\oZ$ are valuations.

For $y,z\in\Z^n$, observe that
\begin{align*}
\oZ(P+y+z)&=\sum_{j=0}^r\oZ^{r-j}(P+y)\frac{z^{j}}{j!}=\sum_{k=0}^r\oZ^{r-k}(P)\frac{(y+z)^k}{k!}\\
&=\sum_{k=0}^r\oZ^{r-k}(P)\sum_{j=0}^k\frac{y^{k-j}z^{j}}{j!(k-j)!}
=\sum_{j=0}^r\sum_{k=j}^r\oZ^{r-k}(P)\frac{y^{k-j}z^{j}}{j!(k-j)!}.
\end{align*}
Therefore
\[
\oZ^{r-j}(P+y)=\sum_{k=j}^r\oZ^{r-k}(P)\frac{y^{k-j}}{(k-j)!}=\oZ^{r-j}(P)+\cdots+\oZ^0(P)\frac{y^{r-j}}{(r-j)!},
\]
that is, we obtain the same associated functions as before. 
\end{proof}
 
\goodbreak
We require further results on the associated functions. 

\begin{proposition}\label{assoc2}
Let $\oZ:\lpn\to \T^r$ be a translation covariant valuation.
If $\,\oZ$ is $\slnz$ equivariant, then its associated functions are also  $\slnz$ equivariant. If $\,\oZ$ is $i$-homogeneous, then its associated function $\oZ^j$ vanishes for $j<r-i$ and otherwise is $(i+j-r)$-homogeneous.
\end{proposition}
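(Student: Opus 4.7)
The plan is to follow the template of the proof of Proposition~\ref{assoc1}: combine the given symmetry hypothesis on $\oZ$ with translation covariance to obtain a polynomial identity in the translation vector $y$, then compare coefficients of $y^j$ to read off the corresponding property of each associated function $\oZ^{r-j}$.

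For the $\slnz$ equivariance statement I would start from
\[
\oZ(\phi(P+y))=\oZ(\phi P+\phi y),
\]
which is valid for $\phi\in\slnz$ and $y\in\Z^n$ since $\phi$ preserves the integer lattice. Expanding the right-hand side by translation covariance of $\oZ$ applied to $\phi P$ with translate $\phi y$, and the left-hand side by $\slnz$ equivariance of $\oZ$ followed by translation covariance in $y$, each side becomes a polynomial expression in $\phi y$. The identity $x^j\circ\phi^t=(\phi x)^j$ for $x\in\R^n$ together with the distributivity of precomposition with $\phi^t$ over the symmetric tensor product aligns the two expansions term by term. Substituting $y':=\phi y$, which ranges over $\Z^n$ as $y$ does, and comparing coefficients of $(y')^j$ then yields $\oZ^{r-j}(\phi P)=\oZ^{r-j}(P)\circ\phi^t$ for each $j$, which is the $\slnz$ equivariance of the associated functions.

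For the homogeneity assertion I would apply the analogous strategy to
\[
\oZ(\lambda(P+y))=\lambda^i\,\oZ(P+y)=\oZ(\lambda P+\lambda y)
\]
for $\lambda\in\N$. Expanding both sides by translation covariance and using $(\lambda y)^j=\lambda^j y^j$, comparison of coefficients of $y^j$ produces the central identity
\[
\lambda^j\,\oZ^{r-j}(\lambda P)=\lambda^i\,\oZ^{r-j}(P)\qquad\text{for every }\lambda\in\N.
\]
For indices $j\le i$, dividing by $\lambda^j$ (for $\lambda\ge 1$) and re-indexing via $l=r-j$ gives $\oZ^l(\lambda P)=\lambda^{i+l-r}\oZ^l(P)$ with non-negative exponent $i+l-r\ge 0$, i.e.\ $(i+l-r)$-homogeneity of $\oZ^l$.

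The main obstacle is the vanishing of $\oZ^l$ for $l<r-i$. In that range the identity has strictly negative exponent $i+l-r$, which is incompatible with polynomial dependence of $\lambda\mapsto\oZ^l(\lambda P)$ on $\lambda$ unless $\oZ^l(P)=0$. Making this precise, writing $\oZ^l(\lambda P)=\sum_{k\ge 0}A_k(P)\lambda^k$ and substituting into $\lambda^{r-l}\oZ^l(\lambda P)=\lambda^i\oZ^l(P)$ gives $\sum_k A_k(P)\,\lambda^{k+r-l}=\lambda^i\,\oZ^l(P)$; since no non-negative $k$ satisfies $k+r-l=i$ when $r-l>i$, every $A_k(P)$ must vanish, so $\oZ^l(P)=\sum_k A_k(P)=0$. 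The polynomial dependence itself comes from the homogeneous decomposition of translation covariant tensor valuations established (after Khovanski\u\i\ \& Pukhlikov) in Section~\ref{sec_ehrhart}, applied to the valuation $\oZ^l$, which is translation covariant by Proposition~\ref{assoc1}.
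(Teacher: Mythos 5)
Your argument is correct and follows the same route as the paper's proof: expand $\oZ$ at a translated and transformed polytope using translation covariance, exploit either $\slnz$ equivariance or homogeneity on the outer $\oZ$, and compare coefficients in the translation vector to obtain $\oZ^{r-j}(\phi P)=\oZ^{r-j}(P)\circ\phi^t$ and $\lambda^j\oZ^{r-j}(\lambda P)=\lambda^i\oZ^{r-j}(P)$, respectively. The only cosmetic difference in the equivariance part is that you work with $\oZ(\phi(P+y))=\oZ(\phi P+\phi y)$ and substitute $y'=\phi y$ at the end, whereas the paper writes $\oZ(\phi P+y)=\oZ(\phi(P+\phi^{-1}y))$ and normalizes at the start; these are the same computation.

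One place where you are actually more careful than the paper is the vanishing of $\oZ^l$ for $l<r-i$. The paper simply reads this off from $\lambda^j\oZ^{r-j}(\lambda P)=\lambda^i\oZ^{r-j}(P)$; for $i=0$ this follows by setting $\lambda=0$, but for $i\geq 1$ one genuinely needs that $\lambda\mapsto\oZ^l(\lambda P)$ is polynomial, which is what you supply via Theorem~\ref{tensor-poly} (itself a consequence of the Khovanski\u\i--Pukhlikov result and independent of Proposition~\ref{assoc2}, so there is no circularity despite the forward reference to Section~\ref{sec_ehrhart}). Both parts of your proposal are sound.
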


\begin{proof}
If $\,\oZ$ is $\slnz$ equivariant, then, for any $\phi\in\slnz$, we can deduce that
\begin{align*}
\sum_{j=0}^r\oZ^{r-j}(\phi P)\frac{y^j}{j!}&=\oZ(\phi P+y)=\oZ(\phi(P+\phi^{-1}y))= \oZ(P+\phi^{-1}y) \circ \phi^t\\
&=\sum_{j=0}^r\big(\oZ^{r-j}(P) \circ \phi^t \big) \Big( \frac{(\phi^{-1}y)^j}{j!}\circ \phi^t\Big)=\sum_{j=0}^r \big( \oZ^{r-j}(P)\circ  \phi^t \big)\frac{y^j}{j!}.
\end{align*}
It follows that the associated functions are also $\slnz$ equivariant.

\goodbreak
Now suppose $\oZ$ is $i$-homogeneous and let $P\in\lpn$.  For  $k\in\N$ and $y\in\Z^n$, we have
$$
\oZ^r\!\big(k(P+y)\big)=\sum_{j=0}^r\oZ^{r-j}(kP)\frac{(ky)^{j}}{j!}.
$$
Furthermore, if we first consider the homogeneity of the valuation, we obtain
$$
\oZ^r\!\big(k(P+y)\big)=k^i\oZ^r(P+y)=\sum_{j=0}^rk^i\oZ^{r-j}(P)\frac{y^j}{j!}.
$$
As these equations hold for any $y\in\Z^n$, a comparison of the two shows that for $k\in \N$ 
$$k^j \oZ^{r-j}(kP) = k^i \oZ^{r-j}(P).$$ 
Hence,
 if the valuation  $\oZ$ is $i$-homogeneous, then  $\oZ^{r-j}$ is $(i-j)$-homogeneous for $j\le i$ and vanishes for $j>i$.
\end{proof}

\goodbreak
The inclusion-exclusion principle is a fundamental property of
valuations on lattice polytopes that was first established  by Betke but left unpublished. The first published proof
was given by McMullen in~\cite{McMullen09} where the following more general extension property was also established.  
For $m\geq 1$, we write $P_J=\bigcap_{j\in J}P_j$ for $\emptyset\neq J\subset\{1,\ldots,m\}$ and given lattice polytopes
$P_1,\ldots,P_m$. Let $\abs{J}$ denote the number of elements in $J$ and let $\G$ be an abelian
group. 

\begin{theorem}[\!\! McMullen \cite{McMullen09}]\label{McMext}
  If\, $\oZ:\lpn \to \G$ is a valuation, then
  there exists an extension of $\,\oZ$, also denoted by $\oZ$,  to finite
  unions of lattice polytopes such that 
  \begin{equation*}
    \oZ(P_1\cup \dots\cup P_m)=\sum_{\emptyset\neq
      J\subset\{1,\ldots,m\}}(-1)^{\abs{J} -1} \oZ(P_J)
  \end{equation*}
  whenever $P_J\in \lpn$ for all $\,\emptyset\neq J\subset\{1,\ldots,m\}$.
\end{theorem}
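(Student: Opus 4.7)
The strategy I would pursue is the Groemer-style approach of first extending $\oZ$ linearly to indicator functions, and then deriving the inclusion-exclusion formula from a pointwise identity. Let $\mathcal{I}$ denote the $\Z$-module of functions on $\R^n$ generated by the indicator functions $\mathbbm{1}_P$ with $P\in\lpn$. The aim is to show that $\oZ$ induces a well-defined $\Z$-linear map $\hat\oZ:\mathcal{I}\to\G$ sending $\mathbbm{1}_P$ to $\oZ(P)$. Granted this, the classical pointwise identity
\begin{equation*}
\mathbbm{1}_{P_1\cup\cdots\cup P_m}\;=\;\sum_{\emptyset\neq J\subset\{1,\ldots,m\}} (-1)^{\abs{J}-1}\,\mathbbm{1}_{P_J}
\end{equation*}
shows that $\mathbbm{1}_{P_1\cup\cdots\cup P_m}\in\mathcal{I}$ whenever all $P_J\in\lpn$; we then define $\oZ(P_1\cup\cdots\cup P_m):=\hat\oZ(\mathbbm{1}_{P_1\cup\cdots\cup P_m})$, and applying $\hat\oZ$ to both sides of the identity immediately yields the formula claimed in the theorem.

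The crux is the construction of $\hat\oZ$, which amounts to proving that whenever $\sum_i c_i\mathbbm{1}_{P_i}=0$ holds pointwise for some $P_i\in\lpn$ and $c_i\in\Z$, one also has $\sum_i c_i\oZ(P_i)=0$ in $\G$. The plan here is to reduce every such relation to iterated applications of the single indicator identity $\mathbbm{1}_{P\cup Q}+\mathbbm{1}_{P\cap Q}=\mathbbm{1}_P+\mathbbm{1}_Q$, which is precisely the valuation axiom whenever $P,Q,P\cup Q,P\cap Q\in\lpn$. Concretely, I would pass to a common lattice polytopal subdivision of the supports of the $P_i$: any finite family of lattice polytopes admits such a subdivision, for instance by successively cutting along the supporting hyperplanes of facets, and each $\mathbbm{1}_{P_i}$ can then be expressed as a sum of indicators of the subdivision pieces via repeated application of the valuation relation. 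Consistency of $\hat\oZ$ follows because two representations of the same function in $\mathcal{I}$ can both be rewritten in this canonical subdivided form and then compared.

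The main obstacle is carrying out this reduction while ensuring that every intermediate union and intersection encountered in the sequence of substitutions actually lies in $\lpn$, so that the valuation axiom genuinely applies at each step. This is where the combinatorics of polytopal subdivisions has to be handled delicately, and it is essentially the content of McMullen's result \cite{McMullen09}. Once $\hat\oZ$ is in place, the inclusion-exclusion formula in the theorem is a formal consequence of the indicator identity displayed above, and uniqueness of the extension on finite unions follows because $\hat\oZ$ depends only on the indicator function of the union, not on the chosen representing family.
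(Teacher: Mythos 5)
The paper does not prove this theorem; it is imported from McMullen \cite{McMullen09}, so there is no proof of record here to compare against, and I assess your proposal on its own merits.

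The skeleton is the standard Groemer-style reduction to indicator functions, and the pointwise identity
\[
\mathbbm{1}_{P_1\cup\cdots\cup P_m}=\sum_{\emptyset\neq J\subset\{1,\ldots,m\}}(-1)^{\abs{J}-1}\,\mathbbm{1}_{P_J}
\]
is correct; if a $\Z$-linear $\hat\oZ$ on $\mathcal I$ agreeing with $\oZ$ on $\lpn$ existed, the claimed formula would follow at once. The gap is in the well-definedness step. You propose to obtain a common lattice polytopal subdivision of the $P_i$ by ``successively cutting along the supporting hyperplanes of facets,'' but for lattice polytopes this fails: cutting a lattice polytope by a rational hyperplane need not produce lattice polytopes. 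For instance, slicing the lattice triangle with vertices $(0,0)$, $(3,0)$, $(0,1)$ by the line $x=1$ creates the vertex $(1,2/3)$, which is not in $\Z^2$. So there is no guarantee of a common refinement lying in $\lpn$, and the reduction to iterated uses of $\mathbbm{1}_{P\cup Q}+\mathbbm{1}_{P\cap Q}=\mathbbm{1}_P+\mathbbm{1}_Q$ --- which is only available when $P$, $Q$, $P\cup Q$, $P\cap Q\in\lpn$ --- cannot be carried out by this route. This non-closure under cuts is precisely what makes the lattice case hard: for ordinary polytopes the subdivision argument gives the Volland--Groemer extension theorem, whereas for $\lpn$ it breaks down and a genuinely different mechanism is required. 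You in fact concede this (``it is essentially the content of McMullen's result''), which makes the argument circular: the only nontrivial step is delegated to the very theorem being proved. McMullen's actual proof does not pass through a common lattice refinement; it is an inductive argument on dimension exploiting the algebra of dissections and weak valuations on lattice polytopes. As written, the proposal reduces the theorem to itself at the crucial point.
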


In particular, Theorem~\ref{McMext} can be used to define valuations on the relative interior of lattice polytopes. We write $\relbd P$ for the relative boundary of $P\in\lpn$ and further set
${\oZ}(\relint P)=$~$ \oZ(P)
- {\oZ}(\relbd P)$. Expressing
$\relbd P$ as the union of its faces, we obtain
\begin{equation}
  \label{intP}
  {\oZ}(\relint P)=(-1)^{\dim (P)}\sum\nolimits_{F}(-1)^{\dim(F)}\oZ(F)
\end{equation}
for $P\in\lpn$ where we sum over all non-empty faces of $P$.

Betke \& Kneser  \cite{Betke:Kneser} proved  their classification theorem  by using suitable dissections and complementations of lattice polytopes by lattice simplices. Let $T_k\in\lpn$ be the standard $k$-dimensional simplex, that is, the convex hull of the origin and the vectors $e_1, \dots, e_k$.
We call a $k$-dimensional simplex $S$  \emph{unimodular} if there are $\phi\in\slnz$ and $x\in\Z^n$ such that
$S=\phi (T_k+x)$. We require the following results.

\begin{proposition}[Betke \& Kneser \cite{Betke:Kneser}]
\label{complementation} 
For every  $P\in\lpn$, there exist unimodular simplices $S_1,\dots, S_m$ and integers $k_1,\dots,k_m$ such that
$$
\oZ(P)=\sum_{i=1}^m k_i \oZ(S_i)
$$
for all valuations $\oZ$ on $\lpn$ with values in an abelian group.
\end{proposition}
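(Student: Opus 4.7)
My plan is to adapt the classical Betke--Kneser argument, which proceeds by a double reduction. First I would reduce the claim from arbitrary lattice polytopes to lattice simplices using a triangulation together with the inclusion-exclusion property (Theorem~\ref{McMext}), and then I would reduce lattice simplices to unimodular ones by induction on a volume-based complexity measure, using a ``complementation'' trick.

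For the first stage, choose any lattice triangulation $P = S_1\cup\dots\cup S_m$ whose vertices lie in $P\cap\Z^n$ (a pulling triangulation is one concrete choice). Each $S_i$ is a lattice simplex and each non-empty intersection $S_J = \bigcap_{j\in J}S_j$ is a common face, hence itself a lattice simplex. Theorem~\ref{McMext} then gives
\[
\oZ(P)=\sum_{\emptyset\neq J\subset\{1,\dots,m\}}(-1)^{|J|-1}\oZ(S_J),
\]
an integer linear combination of $\oZ$-values on lattice simplices, so it suffices to treat the case when $P$ is itself a lattice simplex.

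For the second stage, define the \emph{index} of a lattice $k$-simplex $S$ to be the positive integer $\iota(S) = k!\,V_k(S)$ computed inside $\aff(S)\cap\Z^n$, so that $\iota(S)=1$ exactly when $S$ is unimodular. I would proceed by induction on the pair $(\dim S,\iota(S))$ in lexicographic order. When $\iota(S)>1$, translate so that $S = [0,v_1,\dots,v_k]$; the half-open fundamental parallelepiped $\{\sum t_iv_i : 0\le t_i<1\}$ then contains $\iota(S)-1 \ge 1$ non-zero lattice points, so pick any such $w$. If $w$ lies in $S$, the stellar subdivision of $S$ at $w$ writes $S$ as a union of simplices of volume $\lambda_j V_k(S)$ with $\lambda_j\in[0,1)$, each of strictly smaller index, and inclusion-exclusion (Theorem~\ref{McMext}) finishes the step. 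If $w\notin S$, form the complementary simplex $S' = [w,v_1,\dots,v_k]$; then $S\cap S'$ lies in the facet $[v_1,\dots,v_k]$ and drops dimension, while $S\cup S'$ re-triangulates into lattice simplices of strictly smaller index (the key identity is $\det(v_1-w,\dots,v_k-w) = (1-\sum t_i)\det(v_1,\dots,v_k)$, giving $V_k(S') = |1-\sum t_i|V_k(S)<V_k(S)$). The valuation identity $\oZ(S)=\oZ(S\cup S')+\oZ(S\cap S')-\oZ(S')$ combined with the inductive hypothesis then produces the required integer combination of $\oZ$-values on unimodular simplices.

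The main obstacle is the complementation step: one must orchestrate the choice of $w$ and the re-triangulation of $S\cup S'$ so that every resulting $k$-dimensional piece genuinely has index strictly less than $\iota(S)$, with the lower-dimensional remnants $S\cap S'$ absorbed by the dimension component of the induction. Once this numerical bookkeeping is secured, assembling stages one and two and unrolling the nested inductions produces the explicit unimodular simplices $S_1,\dots,S_m$ and integer coefficients $k_1,\dots,k_m$ claimed by the proposition. Since the entire reduction manipulates $\oZ$ symbolically using only the defining valuation axiom and the inclusion-exclusion extension, the resulting identity is universal in $\oZ$ and hence holds for every valuation on $\lpn$ with values in an abelian group.
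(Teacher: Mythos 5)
The paper states Proposition~\ref{complementation} as a theorem of Betke and Kneser and does not prove it, so there is no in-paper argument to compare against; your proposal must be judged on its own. Your Stage~1 (triangulate with vertices in $P\cap\Z^n$ and apply the inclusion--exclusion of Theorem~\ref{McMext}) correctly reduces the problem to lattice simplices, and your Case~1 is sound: if $0\ne w\in S$ has barycentric coordinates $(1-\sum t_i,t_1,\dots,t_k)$, all of these lie in $[0,1)$, so every full-dimensional cell of the stellar subdivision at $w$ has index strictly smaller than $\iota(S)$.

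What you flag as ``the main obstacle'' in Case~2 is, however, a genuine gap, and it cannot be closed inside your scheme by a better choice of $w$, of the re-triangulation, or of which vertex of $S$ you place at the origin. Your identity $V_k(S')=|1-\sum t_i|\,V_k(S)$ is correct, but the induction needs $\sum t_i<2$, and this can fail for every admissible $w$ from every apex. Take
\[
S=[0,\,e_1,\,e_2,\,e_3,\,e_4,\,e_1+e_2+e_3+e_4+2e_5]\in\lp{5},
\]
so $\iota(S)=2$. The half-open parallelepiped from the apex $0$ contains exactly one nonzero lattice point, namely $w=e_1+\dots+e_5$, and its coefficients are $t_1=\dots=t_5=\tfrac12$, hence $\sum t_i=\tfrac52$; thus $w\notin S$ and $\iota(S')=\bigl|1-\tfrac52\bigr|\cdot 2=3>\iota(S)$, so the complementation strictly \emph{increases} the index. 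Moreover, the barycentric fractional parts of the generating coset are $\tfrac12$ in all six entries, so the same computation occurs no matter which vertex you translate to the origin. Your lexicographic induction on $(\dim S,\iota(S))$ therefore stalls at this five-dimensional simplex of index~$2$, and no ``orchestration'' of $w$ within the single add-one-vertex complementation you describe can restart it. Completing the argument requires a structurally different reduction, which is precisely the technical content of \cite{Betke:Kneser} (see also the account in \cite{BoeroeczkyLudwig_survey}).
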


\noindent\goodbreak
The following statement is a  direct consequence of this proposition.

\begin{corollary}
\label{BetkeKneserinvariance}
If $\,\oZ, \oZpr\!: \lpn \to \T^r$ are $\,\slnz$ equivariant and translation invariant valuations such that 
\begin{equation*}\label{bkeq}
\oZ( T_i)=\oZpr (T_i) \,\text{ for }\,i=0,\dots,n,
\end{equation*}
then $\oZ=\oZpr$ on $\lpn$.
\end{corollary}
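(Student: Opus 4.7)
The plan is to consider the difference $\oY=\oZ-\oZpr$, which inherits from $\oZ$ and $\oZpr$ the structure of an $\slnz$ equivariant and translation invariant valuation $\lpn\to\T^r$, and which by hypothesis satisfies $\oY(T_i)=0$ for $i=0,\dots,n$. Establishing the corollary is equivalent to showing $\oY(P)=0$ for every $P\in\lpn$.

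To do this I would invoke Proposition \ref{complementation}, applied to $\oY$: for any $P\in\lpn$ there are unimodular simplices $S_1,\dots, S_m$ and integers $k_1,\dots,k_m$, depending only on $P$, such that
\[
\oY(P)=\sum_{i=1}^m k_i\,\oY(S_i).
\]
Hence it suffices to verify $\oY(S)=0$ for every unimodular simplex $S\in\lpn$. By definition of unimodularity, such an $S$ is of the form $S=\phi(T_k+x)$ for some $0\le k\le n$, some $\phi\in\slnz$, and some $x\in\Z^n$. Translation invariance gives $\oY(T_k+x)=\oY(T_k)$, which is the zero tensor by assumption, and $\slnz$ equivariance then yields
\[
\oY(S)=\oY(\phi(T_k+x))=\oY(T_k+x)\circ\phi^t=0,
\]
as required.

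There is essentially no obstacle in this argument, since Proposition \ref{complementation} has already absorbed the combinatorial content of decomposing a lattice polytope by unimodular simplices. The only point to check is that the two invariance hypotheses are exactly what reduce the value of $\oY$ on an arbitrary unimodular simplex to its value on one of the finitely many standard simplices $T_0,\dots,T_n$, on which $\oY$ vanishes by hypothesis.
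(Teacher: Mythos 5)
Your argument is correct and is precisely the reasoning the paper has in mind: it presents the corollary as a direct consequence of Proposition \ref{complementation}, and your proof simply spells out the reduction from an arbitrary unimodular simplex to the standard simplices $T_0,\dots,T_n$ via translation invariance and $\slnz$ equivariance.
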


A function $\oZ$ is \textit{Minkowski additive} if 
$
\oZ(P+Q)=\oZ(P)+\oZ(Q)
$
for any $P,Q\in\lpn$. The following is proved as \cite[Remark 6.3.3]{Schneider:CB2}.

\begin{proposition}\label{linear}
Every 1-homogeneous, translation invariant valuation $\oZ:\lpn\to\T^r$ is Minkowski additive.
\end{proposition}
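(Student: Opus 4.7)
The plan is to reduce Minkowski additivity to a coefficient comparison in a bivariate polynomial expansion. The heavy lifting is the existence of such an expansion; once available, the $1$-homogeneity hypothesis immediately forces the polynomial to have degree one in a way that makes additivity transparent.

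First, I would invoke the theorem of Khovanski{\u\i} and Pukhlikov \cite{PukhlikovKhovanskij}: for any translation invariant valuation from $\lpn$ taking values in an abelian group, and for any fixed $P_1,\ldots,P_m \in \lpn$, the map $(k_1,\ldots,k_m)\mapsto \oZ(k_1P_1 + \cdots + k_mP_m)$ on $\N^m$ is the restriction of a polynomial in $k_1,\ldots,k_m$. Specializing to $m = 2$ with $P_1 = P$ and $P_2 = Q$, define
\[
p(k, l) := \oZ(k P + l Q) \in \T^r.
\]
Next, I would exploit $1$-homogeneity. Because $P$ and $Q$ are convex, $m(kP + lQ) = (mk)P + (ml)Q$ for all $m, k, l \in \N$, so
\[
p(mk, ml) \,=\, \oZ\bigl(m(kP + lQ)\bigr) \,=\, m\,\oZ(kP + lQ) \,=\, m\, p(k, l).
\]
Writing $p(k, l) = \sum_{i, j \ge 0} a_{ij}\, k^i l^j$ (a finite sum), this functional equation forces $a_{ij} = 0$ whenever $i + j \neq 1$; that is, $p(k, l) = a_{10}\, k + a_{01}\, l$. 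Evaluating at $(k, l) = (1, 0)$ and $(0, 1)$ identifies $a_{10} = \oZ(P)$ and $a_{01} = \oZ(Q)$; setting $k = l = 1$ then yields $\oZ(P + Q) = p(1, 1) = \oZ(P) + \oZ(Q)$, as required.

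The main obstacle is the bivariate polynomial expansion of $p(k, l)$. If one wishes to keep the preliminaries self-contained rather than rely on \cite{PukhlikovKhovanskij} (which is in any case used in Section \ref{sec_ehrhart} to prove Theorem \ref{tensor_ehrhart}), the $m = 2$ case can be established directly by induction on $\dim(P) + \dim(Q)$, dissecting $kP + lQ$ into unimodular simplices and applying the inclusion-exclusion extension of Theorem \ref{McMext}.
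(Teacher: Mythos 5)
Your proof is correct. Invoking Theorem~\ref{poly} of Khovanski{\u\i} \& Pukhlikov with $d=0$ (a translation invariant valuation is translative polynomial of degree~$0$) to get a bivariate polynomial expansion $p(k,l)=\oZ(kP+lQ)$, and then using $1$-homogeneity to force $p$ to be a homogeneous linear form $a_{10}k+a_{01}l$ with $a_{10}=\oZ(P)$, $a_{01}=\oZ(Q)$, is exactly the standard argument; the paper simply cites \cite[Remark~6.3.3]{Schneider:CB2}, where the analogous claim for polytope valuations rests on the same polynomiality-plus-homogeneity comparison.
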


\section{Ehrhart tensor polynomials}\label{sec_ehrhart}

We now apply results on translative polynomial valuations to show that the evaluation of the discrete moment tensor on dilated lattice polytopes yields a homogeneous decomposition in which the coefficients themselves are new tensor valuations. In analogy to  Ehrhart's celebrated result, we call this expansion the {\em Ehrhart tensor polynomial} of~$P$. 

We now consider valuations that take values in a rational vector space which we denote by $\V$. A valuation $\oZ:\lpn\to \V$ is {\em translative poly\-nomial} of degree at most $d$ if, for every $P\in\lpn$,
the function defined on $\Z^n$ by $x\mapsto\oZ(P+x)$
is a polynomial of degree at most $d$. 
McMullen \cite{McMullen77} considered translative polynomial valuations of degree at
most one and Khovanski{\u\i} \&  Pukhlikov \cite{PukhlikovKhovanskij} proved
Theorem~\ref{Polynomial} in the general case. Another proof, following
the approach of \cite{McMullen77}, is due to Alesker \cite{Alesker99}.  These
papers assume that the valuation  on $\lpn$
satisfies the inclusion-exclusion principle, which holds by Theorem \ref{McMext}. 

\begin{theorem}[Khovanski{\u\i} \&  Pukhlikov \cite{PukhlikovKhovanskij}]\label{poly}
Let $\oZ:\lpn\to\V$ be a valuation which is translative polynomial of degree at most $d$ and let $P_1,\dots,P_m\in\lpn$ be given. For any $k_1,\dots,k_m\in\N$, the function $\oZ(k_1P_1+\dots+k_mP_m)$ is a polynomial in $k_1,\dots,k_m$ of total degree at most $d+n$. Moreover, the coefficient of $k_1^{r_1}\cdots k_m^{r_m}$ is an $r_i$-homogeneous valuation in $P_i$ which is translative polynomial of degree at most $d$.
\end{theorem}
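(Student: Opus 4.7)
The plan is to follow the approach of Pukhlikov \& Khovanski{\u\i} \cite{PukhlikovKhovanskij} (with an alternative due to Alesker \cite{Alesker99} in the spirit of McMullen \cite{McMullen77}). As a preliminary step, I would apply Theorem~\ref{McMext} to extend $\oZ$ to finite unions of lattice polytopes so that inclusion--exclusion is available. A key observation to be used throughout is that by the translative polynomial hypothesis, for any fixed $P\in\lpn$ and lattice vectors $y_1,\ldots,y_l$, the function $(k_1,\ldots,k_l)\mapsto \oZ(P+k_1 y_1+\cdots+k_l y_l)$ is a polynomial in $(k_1,\ldots,k_l)$ of total degree at most $d$, being the restriction of a polynomial of degree at most $d$ on $\Z^n$ to a linear subspace.

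First I would handle the single-polytope case $m=1$ by showing that $k\mapsto\oZ(kP)$ is polynomial in $k$ of degree at most $d+\dim(P)$. The argument proceeds by induction on $\dim(P)$, with the base case $\dim(P)=0$ immediate from the preliminary observation (since $kP=\{0\}+kp$). For the inductive step, dissect the boundary layer $(k+1)P\setminus kP$ into translates of proper faces of $P$ by lattice vectors depending linearly on $k$; the inductive hypothesis applied to the faces (of smaller dimension), together with the preliminary observation, shows via inclusion--exclusion that $\oZ((k+1)P)-\oZ(kP)$ is polynomial in $k$ of degree at most $d+\dim(P)-1$. Summation in $k$ then yields the desired polynomial expansion of $\oZ(kP)$.

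The hard part is the multi-polytope case $m\ge2$, since Minkowski summation with a fixed lattice polytope $Q$ does not preserve the valuation structure on $\lpn$: unions and intersections of lattice polytopes interact poorly with the operation $+Q$, so one cannot simply iterate the single-polytope case by fixing all but one of the $k_i$ and invoking the previous step. Following \cite{PukhlikovKhovanskij}, the plan is to resolve this by passing to the polytope ring, viewing $\oZ$ as a linear functional on the $\Q$-vector space spanned by indicator functions $\mathbf{1}_P$ of lattice polytopes, equipped with the multiplication $\mathbf{1}_P\cdot\mathbf{1}_Q:=\mathbf{1}_{P+Q}$. In this algebraic framework the multilinear Minkowski-sum structure becomes transparent, and the polynomial dependence of $\oZ(k_1P_1+\cdots+k_mP_m)$ on $(k_1,\ldots,k_m)$ of total degree at most $d+n$ follows formally from the single-variable result together with a dimension count, since the Minkowski sum $k_1P_1+\cdots+k_mP_m$ lies in an ambient space of dimension $n$.

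Finally, once the polynomial expansion is in hand, the assertions about the coefficient of $k_1^{r_1}\cdots k_m^{r_m}$ follow by comparing polynomial coefficients in suitable identities: the valuation property in $P_i$ comes from applying the expansion to $P_i\cup P_i'$ and $P_i\cap P_i'$ (together with the corresponding translated and summed terms) and matching monomials in the $k_j$; the $r_i$-homogeneity in $P_i$ is immediate from the expansion; and the translative polynomial property of degree at most $d$ in $P_i$ is obtained by applying the expansion to translates $P_i+y$ and again matching coefficients.
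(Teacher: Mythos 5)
The paper does not prove this theorem; it cites Khovanski{\u\i} \& Pukhlikov (and Alesker's alternative argument following McMullen) and merely notes that the inclusion--exclusion hypothesis those papers require is supplied by Theorem~\ref{McMext}. Your plan follows the cited route at a high level, and your preliminary observations (extension via Theorem~\ref{McMext}; restriction of a polynomial on $\Z^n$ to a sublattice) are fine. But both main steps contain gaps. In the $m=1$ warm-up, the shell $(k+1)P\setminus kP$ cannot be dissected into translates of \emph{proper} faces of $P$: already for $P=[0,1]$ the shell is a translate of $P$ itself, and for $P=T_2$ the natural unimodular triangulation of the shell uses translates of $-T_2$, which is not a face. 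More seriously, the number of pieces in any dissection into translates of a fixed finite collection of polytopes grows like $k^{\dim P-1}$, so ``each piece contributes a polynomial of degree at most $d$'' does not by itself yield polynomiality of the total; one must exploit that the translation vectors range over lattice points of a dilating face and evaluate the sum explicitly (a Faulhaber-type summation), which is essentially the statement one is trying to prove and therefore needs to be argued, not asserted.

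For $m\ge2$, ``the polynomial dependence \dots\ follows formally from the single-variable result together with a dimension count'' skips the substance of the Khovanski{\u\i}--Pukhlikov argument. Passing to the polytope ring does make $\mathbf 1_{k_1P_1+\dots+k_mP_m}=\mathbf 1_{P_1}^{k_1}\cdots\mathbf 1_{P_m}^{k_m}$ transparent, but $\mathbf 1_{P_i}-\mathbf 1_{\{0\}}$ is \emph{not} nilpotent in the ring of indicator functions itself (for instance $(\mathbf 1_{\{e_1\}}-\mathbf 1_{\{0\}})^k\neq 0$ for every $k$); the nilpotency that actually yields the degree bound $d+n$ holds only after passing to a suitable quotient or filtration of the polytope algebra, and establishing it in tandem with the translative-degree-$d$ hypothesis is the real content of the theorem. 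As you yourself observe, Minkowski addition with a fixed $Q$ destroys the valuation structure on $\lpn$, so the single-variable result cannot be bootstrapped in a formal way: a ``dimension count'' is no substitute for that structure theorem, and you should either prove or cite it precisely.
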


Here we only require a special case of the result by Khovanski{\u\i} \&  Pukhlikov. 
\smallskip

\goodbreak

\begin{theorem}
 \label{Polynomial}
 If $\,\oZ:\lpn \to\V$ is a valuation that is translative polynomial of degree at most $d$, then  there exist $\oZ_i: \lpn \to \V$ for $i=0,\ldots, n+d$   such that
  \begin{equation*}
    \oZ(kP)=\sum_{i=0}^{n+d}\oZ_i(P)k^i
  \end{equation*} 
  for every $k\in\N$ and $P\in \lpn$. For each $i$, the function
  $\oZ_i$ is a translative polynomial and  $i$-homogeneous valuation.
\end{theorem}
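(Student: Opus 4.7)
The plan is to invoke Theorem~\ref{poly} in the special case $m=1$ with $P_1=P$. Doing so immediately produces a polynomial expansion $\oZ(kP)=\sum_{i=0}^{n+d}\oZ_i(P)\,k^i$, since the total-degree bound $d+n$ becomes a degree bound of $d+n$ in the single variable $k$. For each $i\in\{0,\dots,n+d\}$, the function $\oZ_i:\lpn\to\V$ is then well-defined as the coefficient of $k^i$ in the polynomial $k\mapsto\oZ(kP)$, which depends only on $P$.

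The three asserted properties of $\oZ_i$ then follow from the \emph{moreover} clause of Theorem~\ref{poly}: the coefficient of $k^{i}$ is an $i$-homogeneous valuation in $P$ that is translative polynomial of degree at most $d$. As a consistency check, the $i$-homogeneity can also be extracted directly by replacing $P$ with $lP$ and comparing coefficients of $k^i$ in the two expansions $\oZ(klP)=\sum_i\oZ_i(lP)\,k^i$ and $\oZ((kl)P)=\sum_i\oZ_i(P)\,(kl)^i$, which yields $\oZ_i(lP)=l^i\oZ_i(P)$ for all $l\in\N$. The real content ---the existence of the polynomial structure itself--- has already been established by Khovanski{\u\i} and Pukhlikov, so that Theorem~\ref{Polynomial} amounts to specializing their multivariable statement to a single dilated polytope; no further obstacle arises.
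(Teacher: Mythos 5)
Your proposal is correct and matches the paper's approach: the paper presents Theorem~\ref{Polynomial} precisely as "a special case of the result by Khovanski{\u\i} \& Pukhlikov," i.e., Theorem~\ref{poly} with $m=1$, and the three properties of $\oZ_i$ are read off from the moreover clause there, just as you do. (The paper additionally remarks that the $i$-homogeneity of $\oZ_i$ sharpens the degree of translative polynomiality of $\oZ_i$ to exactly $i$, but this refinement is not needed for the statement as given.)
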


\noindent 
Since $\oZ_i$ is $i$-homogeneous, the function $x\mapsto \oZ_i(P+x)$ is an $i$-homogeneous polynomial. As a consequence, the function $\oZ_i$ is translative polynomial of degree $i$. Note that this result contains the translation invariant case by setting $d=0$.

\goodbreak
Let $\oZ:\lpn\to \T^r$ be a  translation
covariant tensor valuation.  For given $v_1,\dots, v_r\in$~$\R^n$, we associate  the
real-valued valuation $P\mapsto \oZ(P)(v_1,\dots,v_r)$ with  the tensor valuation $\oZ$.
Since 
\begin{equation*}
  \oZ(P+y)(v_1,\dots,v_r) =\sum_{j=0}^r \Big( \oZ^{r-j}(P) \frac{y^{j}}{j!}\Big)(v_1,\dots,v_r),
\end{equation*}
the real-valued valuation $P\mapsto \oZ(P)(v_1,\dots,v_r)$ is translative polynomial of degree at most $r$. Therefore, we immediately obtain the following consequence of Theorem~\ref{Polynomial}.

\goodbreak

\begin{theorem}
  \label{tensor-poly}
  If $\,\oZ:\lpn \to\T^r$ is a translation covariant
  valuation, then  there exist $\oZ_i:$~$ \lpn \to$~$\T^r$ for $i=0,\ldots, n+r$ such that
  \begin{equation*}
    \oZ(kP)=\sum_{i=0}^{n+r}\oZ_i(P)k^i
  \end{equation*} 
  for every $k\in\N$ and $P\in \lpn$. For each $i$, the function
  $\oZ_i$ is a translation covariant  and $i$-homogeneous valuation.
\end{theorem}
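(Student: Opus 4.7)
The plan is to reduce the tensor-valued statement to the scalar case already handled by Theorem~\ref{Polynomial}. Fix $v_1,\dots,v_r\in\R^n$ and consider the real-valued functional $P\mapsto \oZ(P)(v_1,\dots,v_r)$ on $\lpn$. It is a valuation (evaluation at a point is linear), and the displayed identity just above the theorem statement shows that $y\mapsto \oZ(P+y)(v_1,\dots,v_r)$ is a polynomial in $y\in\Z^n$ of degree at most $r$; that is, this scalar valuation is translative polynomial of degree at most $r$. Theorem~\ref{Polynomial} therefore produces, for each choice of $v_1,\dots,v_r$, unique real numbers $c_i(P;v_1,\dots,v_r)$ for $i=0,\dots,n+r$ such that
\begin{equation*}
\oZ(kP)(v_1,\dots,v_r)=\sum_{i=0}^{n+r} c_i(P;v_1,\dots,v_r)\,k^i
\end{equation*}
for every $k\in\N$ and $P\in\lpn$. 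We define $\oZ_i(P)(v_1,\dots,v_r):=c_i(P;v_1,\dots,v_r)$.

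Next I verify that $\oZ_i(P)\in\T^r$, i.e.\ that $\oZ_i(P)$ depends symmetrically and multilinearly on $(v_1,\dots,v_r)$. The coefficients of a polynomial of degree at most $n+r$ can be recovered by Lagrange interpolation from its values at $k=0,1,\dots,n+r$. Thus there exist universal constants $\lambda_{i,k}\in\Q$ (the entries of the inverse Vandermonde matrix) with
\begin{equation*}
\oZ_i(P)(v_1,\dots,v_r)=\sum_{k=0}^{n+r}\lambda_{i,k}\,\oZ(kP)(v_1,\dots,v_r).
\end{equation*}
Since each $\oZ(kP)$ is a symmetric $r$-linear form on $\R^n$ and the right-hand side is a finite linear combination of such forms, $\oZ_i(P)$ is itself a symmetric $r$-tensor. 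The same identity, applied to $P$, $Q$, $P\cup Q$, $P\cap Q$, immediately shows that $\oZ_i$ is a valuation.

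The $i$-homogeneity of $\oZ_i$ is inherited from the scalar version: for every $(v_1,\dots,v_r)$, Theorem~\ref{Polynomial} guarantees that $P\mapsto \oZ_i(P)(v_1,\dots,v_r)$ is $i$-homogeneous, and this identity of scalars for all tuples forces $\oZ_i(kP)=k^i\oZ_i(P)$ as tensors. For translation covariance, combine the Lagrange formula above with translation covariance of $\oZ$: for $y\in\Z^n$,
\begin{equation*}
\oZ_i(P+y)=\sum_{k=0}^{n+r}\lambda_{i,k}\,\oZ(kP+ky)=\sum_{k=0}^{n+r}\lambda_{i,k}\sum_{j=0}^r \oZ^{r-j}(kP)\,\frac{(ky)^{j}}{j!},
\end{equation*}
which is a polynomial in $y$ of degree at most $r$. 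Collecting powers of $y$ therefore produces associated tensors $\oZ_i^{\,0},\dots,\oZ_i^{\,r}$ for $\oZ_i$, so $\oZ_i$ is translation covariant in the sense defined in Section~\ref{tools}.

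The only genuinely non-routine step is the second paragraph: recognising that the Vandermonde inversion provides, simultaneously for all $(v_1,\dots,v_r)$, a representation of the coefficient $\oZ_i(P)$ as a linear combination of bona fide tensors $\oZ(kP)\in\T^r$. Once that linear-algebraic observation is in place, multilinearity, symmetry, the valuation property, homogeneity, and translation covariance all descend from $\oZ$ to $\oZ_i$ by a direct comparison of coefficients.
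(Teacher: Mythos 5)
Your proof is correct and takes essentially the same route as the paper: reduce componentwise to the scalar Theorem~\ref{Polynomial} by fixing $v_1,\dots,v_r$ and observing that $P\mapsto\oZ(P)(v_1,\dots,v_r)$ is translative polynomial of degree at most $r$. The paper states this reduction and declares the conclusion an immediate consequence; you supply the (correct) Vandermonde/Lagrange-interpolation detail showing that the coefficients reassemble into symmetric tensors, and that the valuation property, $i$-homogeneity, and translation covariance of $\oZ_i$ all descend from $\oZ$.
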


\goodbreak
\noindent
Note that if the tensor valuation $\oZ$ is $\slnz$
equivariant, then so are the homogeneous components
$\oZ_0,\ldots, \oZ_{n+r}$. 

\goodbreak
The homogeneous components have a translation property that agrees with the covariance of $\oZ$. The translation covariance of $\oZ=\oZ^r$ together with its decomposition from Theorem~\ref{tensor-poly} yields
\begin{align*}
\oZ^r(k(P+y))&=\sum_{j=0}^r\oZ^{r-j}(kP)\frac{(ky)^j}{j!}\\
&=\sum_{j=0}^{r}\sum_{l=0}^{n+r-j}\oZ_{l}^{r-j}(P)\frac{k^{j+l}y^j}{j!}\\
&=\sum_{j=0}^{r}\sum_{l=j}^{n+r}\oZ_{l-j}^{r-j}(P)\frac{k^l y^j}{j!}.
\end{align*}
By the homogeneous decomposition of Theorem \ref{tensor-poly}, we also have 
\[
\oZ^r(k(P+y))=\sum_{l=0}^{n+r}\oZ_{l}^r(P+y)k^l.
\]
A comparison of the coefficients of these polynomials in $k$ gives
\begin{equation}\label{L_q^r}
\oZ_l^r(P+y)=\sum_{j=0}^l\oZ_{l-j}^{r-j}(P)\frac{y^j}{j!}
\end{equation}
where we set $\oZ_l^s=0$ for $s<0$. Furthermore, if $\oZ^r$ is $\slnz$ equivariant, then Lemma \ref{high_coord}  implies that
\begin{equation}\label{L0}
\oZ_0^r(P)=\oZ^r(0P)=\begin{cases}
c&\textup{if }r=0,\\
0&\textup{otherwise}
\end{cases}
\end{equation}
for $n\geq 2$ with $c\in\R$, and hence, $\oZ^r_1$ is translation invariant for $r\ge2$.

\goodbreak
We apply the homogeneous decomposition of Theorem \ref{tensor-poly} to the discrete moment tensor to obtain the
following corollary.

\begin{corollary}
  \label{cor:38}
   There exist $\,\oL^r_i: \lpn \to \T^r$ for $i=0,\ldots, n+r$ such
  that
  \begin{equation*}
    \oL^r(kP)=\sum_{i=0}^{n+r} \oL^r_i(P)k^i
  \end{equation*} 
 for every $k\in\N$ and $P\in\lpn$. For each $i$, the function
  $\oL^r_i$ is an $\slnz$ equivariant,  translation covariant, and $i$-homogeneous valuation.
\end{corollary}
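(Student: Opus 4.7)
The plan is to deduce the corollary as a direct application of Theorem~\ref{tensor-poly} to the discrete moment tensor $\oL^r$, so the work reduces to verifying the hypotheses and then reading off the extra symmetry properties from previously established facts.

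First I would check that $\oL^r$ is indeed a valuation on $\lpn$ with values in $\T^r$. If $P,Q,P\cup Q,P\cap Q \in \lpn$, then for every $x\in\Z^n$ the indicator values satisfy $\mathbbm{1}_P(x)+\mathbbm{1}_Q(x)=\mathbbm{1}_{P\cup Q}(x)+\mathbbm{1}_{P\cap Q}(x)$; multiplying by $x^r/r!$ and summing over $x\in\Z^n$ yields the valuation identity for $\oL^r$. Next I would verify that $\oL^r$ is translation covariant in the sense defined in Section~\ref{tools}. For $y\in\Z^n$, the binomial expansion for symmetric tensor products gives $(x+y)^r=\sum_{j=0}^r \binom{r}{j}\,x^{r-j}\,y^j$, hence
\begin{equation*}
\oL^r(P+y)=\frac1{r!}\sum_{x\in P\cap\Z^n}(x+y)^r=\sum_{j=0}^r \oL^{r-j}(P)\,\frac{y^j}{j!},
\end{equation*}
so $\oL^r$ is translation covariant with associated functions $\oL^{r-j}$, $j=0,\dots,r$.

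With these two properties in hand, Theorem~\ref{tensor-poly} applies and produces functionals $\oL^r_i:\lpn\to\T^r$ for $i=0,\dots,n+r$ with
\begin{equation*}
\oL^r(kP)=\sum_{i=0}^{n+r} \oL^r_i(P)\,k^i
\end{equation*}
for every $k\in\N$ and $P\in\lpn$, each $\oL^r_i$ being a translation covariant and $i$-homogeneous valuation. To obtain the $\slnz$ equivariance, I would recall from Section~\ref{tools} that $\oL^r(\phi P)=\oL^r(P)\circ\phi^t$ for every $\phi\in\slnz$, and then invoke the remark immediately following Theorem~\ref{tensor-poly} which states that the homogeneous components of an $\slnz$ equivariant translation covariant tensor valuation are themselves $\slnz$ equivariant; alternatively, one may compare $k$-coefficients in the identity $\oL^r(k\phi P)=\oL^r(\phi(kP))=\oL^r(kP)\circ\phi^t$ to conclude $\oL^r_i(\phi P)=\oL^r_i(P)\circ\phi^t$ for each $i$.

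There is no genuine obstacle here; the corollary is essentially a verification that $\oL^r$ meets the hypotheses of Theorem~\ref{tensor-poly} and that the additional $\slnz$ equivariance of $\oL^r$ descends to its homogeneous components. The only point worth spelling out carefully is the translation covariance computation, since it is precisely the binomial expansion of $(x+y)^r$ in the symmetric tensor algebra that identifies the associated functions of $\oL^r$ with $\oL^{r-j}$.
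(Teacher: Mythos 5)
Your proposal is correct and follows essentially the same route as the paper: verify that $\oL^r$ is a valuation and translation covariant (the latter via the binomial identity, which the paper records just before Proposition~\ref{assoc1}), apply Theorem~\ref{tensor-poly}, and note that $\slnz$ equivariance passes to the homogeneous components by the remark immediately following that theorem. Nothing is missing.
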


\noindent
Theorem~\ref{tensor_ehrhart} is then an implication of Corollary \ref{cor:38} and (\ref{L0}).
We remark that, within Ehrhart Theory, further bases for the space of real-valued valuations on $\lpn$ are also important (see  \cite{BoeroeczkyLudwig_survey, JochemkoSanyal} for more information).

\goodbreak
\section{Reciprocity}\label{reciprocity}

The reciprocity theorem of Ehrhart and Macdonald~\cite{Ehrhart62, Macdonald71} is a widely used tool in combinatorics. We provide an extension of their result.

Given a function $\oZ:\lpn\to\T^r$, we define the function $\oZ^\circ:\lpn\to\T^r$ as
\begin{equation}\label{intF}
\oZ^\circ(P)=\sum\nolimits_F(-1)^{\dim (F)}\oZ(F)
\end{equation}
where the sum extends over all non-empty faces $F$ of the lattice polytope~$P$.  
Sallee~\cite{Sallee68} showed that $\oZ^\circ$ is a valuation and that $\oZ^{\circ\circ}=\oZ$. Furthermore, if $\oZ$ is translation invariant or translation covariant, then $\oZ^\circ$ has the same translation property. The latter case can be seen from the equation
\begin{align*}
\oZ^\circ(P+y)&=\sum_{F}(-1)^{\dim (F)}\oZ(F+y)=\sum_{F}(-1)^{\dim (F)}\sum_{j=0}^r\oZ^{r-j}(F)\frac{y^{j}}{j!}\\
&=\sum_{j=0}^r\sum_{F}(-1)^{\dim( F)}\oZ^{r-j}(F) \frac{y^{j}}{j!}=\sum_{j=0}^r\oZ^{(r-j)\circ}(P)\frac{y^{j}}{j!}
\end{align*}
for any $y\in\Z^n$, where we have also shown that the associated tensor $(\oZ^\circ)^{r-j}$ is equal to $(\oZ^{r-j})^\circ$ for every applicable~$j$.

\goodbreak
The following reciprocity theorem was established by McMullen \cite{McMullen77}; see \cite{JochemkoSanyal} for a different proof.  Let $\V$ be a rational vector space.

\begin{theorem}[McMullen \cite{McMullen77}]\label{McM_rec}
If $\,\oZ:\lpn\to \V$ is an $i$-homogeneous and translation invariant valuation, then
\[
\oZ^\circ(P)=(-1)^{i}\oZ(-P)
\]
for $P\in\lpn$.
\end{theorem}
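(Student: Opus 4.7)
The plan is to follow McMullen's approach in \cite{McMullen77}, suitably adapted. I would first observe that both $\oZ^\circ$ and $\bar\oZ(P):=(-1)^{i}\,\oZ(-P)$ define translation-invariant, $i$-homogeneous valuations on $\lpn$. For $\oZ^\circ$ this is essentially recorded in the paragraph preceding the theorem: the valuation property is due to Sallee \cite{Sallee68}, translation invariance is stated there, and $i$-homogeneity follows because for $k\geq 1$ every face of $kP$ has the form $kF$ for a unique face $F$ of $P$ of the same dimension, so $\oZ^\circ(kP)=\sum_F (-1)^{\dim F}\oZ(kF)=k^{i}\,\oZ^\circ(P)$, while the $k=0$ case reduces to $\oZ(\{0\})=0$ (which follows for $i\geq 1$ from translation invariance and homogeneity). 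For $\bar\oZ$ the three properties are immediate consequences of those of $\oZ$.

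Next I would reduce to simplices. Fix a triangulation $P=S_{1}\cup\dots\cup S_{N}$ of $P$ using its vertices, so that every intersection $S_{J}=\bigcap_{j\in J}S_{j}$ is again a lattice simplex. Applying the inclusion-exclusion extension of Theorem \ref{McMext} to the two valuations $\oZ^\circ$ and $\bar\oZ$ separately, and observing that $-P$ is triangulated by $-S_{1},\dots,-S_{N}$ with intersections $-S_{J}$, the identity on $P$ becomes a signed sum of the corresponding identities on the $S_{J}$. Hence it suffices to prove $\oZ^\circ(S)=(-1)^{i}\,\oZ(-S)$ for every lattice simplex $S$, and by translation invariance I may further assume $S=\operatorname{conv}\{0,v_{1},\dots,v_{m}\}$.

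For the simplex case I would exploit the polynomial structure of $\oZ$ given by Theorem \ref{poly}: the function $(k,l)\mapsto \oZ(kS+l(-S))$ extends from $\N^{2}$ to a polynomial $p(k,l)$ homogeneous of total degree $i$, with boundary values $p(k,0)=k^{i}\oZ(S)$ and $p(0,l)=l^{i}\oZ(-S)$. Decomposing the centrally symmetric polytope $S+(-S)$ along the face poset of $S$ and matching the expansion of $p(k,l)$ against the alternating face sum $\sum_{F}(-1)^{\dim F}\oZ(F)$ produces, after extracting the appropriate polynomial coefficient and invoking $i$-homogeneity, the reciprocity identity.

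The main obstacle will be this last combinatorial step, where one has to pair the polynomial coefficients of $p(k,l)$ precisely with the alternating face contributions to $\oZ^\circ(S)$. McMullen handles it by passing to the polytope algebra, where the involution $[P]\mapsto [-P]$ acts on the $i$-homogeneous component as $(-1)^{i}$ times the Euler map $[P]\mapsto \sum_{F}(-1)^{\dim F}[F]$; Jochemko \& Sanyal \cite{JochemkoSanyal} instead bypass this step by using half-open simplicial decompositions, for which the reciprocity identity is immediate by a sign-reversing bijection on faces.
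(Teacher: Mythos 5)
The paper does not prove this theorem: it is stated as a cited result of McMullen~\cite{McMullen77}, with a pointer to Jochemko~\& Sanyal~\cite{JochemkoSanyal} for an alternative argument, and the paper simply invokes it (in the proof of Theorem~\ref{hom_rec}). So there is no ``paper proof'' to compare against; I can only assess your proposal on its own merits.

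Your first two steps are sound. That $\oZ^\circ$ is a valuation is Sallee's result, translation invariance of $\oZ^\circ$ is inherited, and $i$-homogeneity follows from the face correspondence between $kP$ and $P$ for $k\ge 1$ as you say (with the degenerate case $k=0$ handled by translation invariance when $i\ge 1$). Likewise $\bar\oZ(P)=(-1)^i\oZ(-P)$ clearly inherits all three properties. The reduction to lattice simplices via a vertex triangulation and Theorem~\ref{McMext}, applied to $\oZ^\circ$ and $\bar\oZ$ separately, is also correct: all nonempty intersections $S_J$ are simplices, and $-P$ is triangulated by the $-S_j$ with intersections $-S_J$, so equality on simplices implies equality on $P$.

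The gap is that the simplex case is never actually proved, and you say so yourself (``the main obstacle will be this last combinatorial step''). The sentence ``decomposing the centrally symmetric polytope $S+(-S)$ along the face poset of $S$ and matching the expansion of $p(k,l)$ against the alternating face sum\,\dots\,produces the reciprocity identity'' is not an argument but a placeholder. Concretely: since $\oZ$ is $i$-homogeneous, the single-variable polynomial $\oZ(kS)=k^i\oZ(S)$ carries no information beyond homogeneity, and the two-variable polynomial $p(k,l)=\oZ(kS+l(-S))=\sum_{j}c_j\,k^jl^{i-j}$ has boundary coefficients $c_i=\oZ(S)$, $c_0=\oZ(-S)$, but the alternating face sum $\oZ^\circ(S)=\sum_F(-1)^{\dim F}\oZ(F)$ does not appear as any coefficient of $p$ in an obvious way, nor do you construct or justify a decomposition of $S+(-S)$ indexed by the face poset of $S$ that would let you read it off. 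The evaluation $p(0,-1)=(-1)^i\oZ(-S)$ that one would need to identify with $\oZ^\circ(S)$ is precisely the statement to be proved, so invoking the polynomial extension begs the question. Referencing McMullen's polytope-algebra involution or the Jochemko--Sanyal half-open decomposition names where a proof could come from, but neither is carried out here, and the half-open route in particular replaces your $p(k,l)$ strategy rather than completing it. As written, the proposal stops exactly at the point where the substantive work begins.
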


This result applies to any rational vector space and, therefore, includes tensor valuations with the aforementioned properties. We now use this result to prove an analogous reciprocity theorem for translation covariant tensor valuations. 

\begin{theorem}\label{hom_rec}
If $\,\oZ:\lpn\rightarrow\T^r$ is an $i$-homogeneous and translation covariant valuation, then
\[
\oZ^\circ(P)=(-1)^{i}\oZ(-P)
\]
for $P\in\lpn$.
\end{theorem}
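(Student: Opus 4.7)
The plan is to proceed by induction on the rank $r$. The base case $r=0$ is precisely Theorem~\ref{McM_rec}, since rank-zero translation covariance collapses to translation invariance. For $r\ge 1$, the strategy is to build an auxiliary rank-$r$ valuation that is genuinely translation invariant and $i$-homogeneous, so that Theorem~\ref{McM_rec} can be applied to it.

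Specifically, I would set
\[
h(P)\;=\;\oZ^\circ(P)-(-1)^i\,\oZ(-P),
\]
and aim to prove $h\equiv 0$. That $h$ is a valuation is immediate from Sallee's observation that $\oZ^\circ$ is a valuation combined with the obvious valuation property of $P\mapsto\oZ(-P)$. Its $i$-homogeneity is also quick: the faces of $kP$ are exactly the $k$-dilates of the faces of $P$, so the $i$-homogeneity of $\oZ$ transfers to $\oZ^\circ$, and $P\mapsto\oZ(-P)$ is $i$-homogeneous for the same reason as $\oZ$. The essential step is translation invariance. Using the translation covariance of $\oZ^\circ$ (with associated functions $(\oZ^{r-j})^\circ$, as recorded in the discussion preceding Theorem~\ref{McM_rec}) together with the expansion $\oZ(-(P+y))=\sum_{j=0}^{r}(-1)^{j}\oZ^{r-j}(-P)\,y^{j}/j!$, one obtains
\[
h(P+y)-h(P)\;=\;\sum_{j=1}^{r}\Bigl[(\oZ^{r-j})^\circ(P)-(-1)^{i+j}\,\oZ^{r-j}(-P)\Bigr]\frac{y^{j}}{j!}.
\]
By Proposition~\ref{assoc2}, each $\oZ^{r-j}$ with $1\le j\le i$ is a translation covariant and $(i-j)$-homogeneous valuation of rank $r-j<r$, while $\oZ^{r-j}=0$ for $j>i$. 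The inductive hypothesis applied at rank $r-j$ yields $(\oZ^{r-j})^\circ(P)=(-1)^{i-j}\oZ^{r-j}(-P)=(-1)^{i+j}\oZ^{r-j}(-P)$, so every bracket vanishes. Hence $h$ is translation invariant on $\lpn$.

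Since $h$ is now a translation invariant, $i$-homogeneous valuation with values in $\T^r$, Theorem~\ref{McM_rec} applies and gives $h^\circ(P)=(-1)^i h(-P)$. On the other hand, the defining sum for $h^\circ$, together with $\oZ^{\circ\circ}=\oZ$ and the fact that faces of $-P$ are exactly the negatives of the faces of $P$ (of the same dimension), produces
\[
h^\circ(P)=\oZ(P)-(-1)^i\oZ^\circ(-P)\qquad\text{and}\qquad(-1)^ih(-P)=(-1)^i\oZ^\circ(-P)-\oZ(P),
\]
so that $(-1)^ih(-P)=-h^\circ(P)$. Combined with the identity from Theorem~\ref{McM_rec}, this forces $h^\circ(P)=-h^\circ(P)$, hence $h^\circ\equiv 0$, and applying $\,{}^\circ\,$ once more yields $h=h^{\circ\circ}=0$, completing the induction.

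The main obstacle is the translation invariance step, which hinges on cleanly aligning the sign $(-1)^{i-j}$ delivered by the inductive hypothesis with the sign $(-1)^{j}$ produced by expanding $(-y)^{j}$, and on reading Proposition~\ref{assoc2} correctly to pin down the homogeneity degree of each associated function $\oZ^{r-j}$. Once this is in place, the final application of Theorem~\ref{McM_rec} together with $\oZ^{\circ\circ}=\oZ$ is essentially bookkeeping.
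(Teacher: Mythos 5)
Your proof is correct and follows essentially the same route as the paper: both define the auxiliary valuation $h(P)=\oZ^\circ(P)-(-1)^i\oZ(-P)$, establish translation invariance via the associated functions and the inductive hypothesis at lower rank, and then apply Theorem~\ref{McM_rec} together with $\oZ^{\circ\circ}=\oZ$ to force $h\equiv 0$. The only cosmetic difference is that the paper derives $\widetilde\oZ(P)=-\widetilde\oZ(P)$ directly, whereas you first get $h^\circ\equiv 0$ and then apply $\circ$ once more.
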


\begin{proof}
We prove this by induction on $r\in\N$ where the case $r=0$ is covered by Theorem~\ref{McM_rec}. By the translation behavior of $\oZ$ and $\oZ^\circ$ and by the induction hypothesis, we have for $P\in\lpn$ and $y\in\Z^n$
\begin{align*}
\oZ^\circ&(P+y)-(-1)^{i}\oZ\left(-\left(P+y\right)\right)\\[2pt]
&=\sum_{j=0}^r \oZ^{\circ j}(P)\frac{y^{r-j}}{(r-j)!}-(-1)^{i}\sum_{j=0}^r\oZ^j(-P)\frac{(-y)^{r-j}}{(r-j)!}\\
&=\oZ^{\circ r}(P)-(-1)^{i}\oZ^r(-P) 
 +\sum_{j=0}^{r-1} \Big( (-1)^{i+j-r}\oZ^{j}(-P)\frac{y^{r-j}}{(r-j)!}-(-1)^{i}\oZ^j(-P)\frac{(-y)^{r-j}}{(r-j)!}\Big)\\
&=\oZ^{\circ}(P)-(-1)^{i}\oZ(-P).
\end{align*}
Recall here that by Proposition \ref{assoc2} the associated tensor $\oZ^j$ is $(i+j-r)$-homogeneous as $\oZ$ is $i$-homogeneous and that $(\oZ^\circ)^j=(\oZ^j)^\circ$.

Let $\widetilde{\oZ}(P)=\oZ^\circ(P)-(-1)^{i}\oZ\left(-P\right)$. Then $\widetilde{\oZ}$ is an $i$-homogeneous and translation invariant valuation. From Theorem~\ref{McM_rec}, we obtain
\[
\widetilde{\oZ}^{\raisebox{-3pt}{\footnotesize$\circ$}}(P)=(-1)^i \,\widetilde{\oZ}(-P).
\]
Thus
\begin{align*}
\widetilde{\oZ}(P)&=(-1)^i\,\widetilde{\oZ}^{\raisebox{-3pt}{\footnotesize$\circ$}}(-P)=(-1)^i\left(\oZ^{\circ\circ}(-P)-(-1)^i\oZ^\circ(P)\right)\\
&=-\left(\oZ^\circ(P)-(-1)^i\oZ(-P)\right)=-\widetilde{\oZ}(P)
\end{align*}
yielding $\widetilde{\oZ}=0$ which proves the theorem.
\end{proof}

The homogeneous decomposition of  tensor valuations from Theorem~\ref{tensor-poly} allows to consider reciprocity without the assumption of homogeneity. Since $\oZ^\circ$ is also a translation covariant valuation if $\oZ$ is,
 the following result is a simple
consequence of Theorem~\ref{hom_rec}.

\begin{corollary}
If \,$\oZ:\lpn \to \T^r$ is a translation covariant valuation, then
\begin{equation*}
 \oZ^\circ( P) = \sum_{i=0}^{n+r} (-1)^{i} \oZ_i(-P)
\end{equation*}
for $P\in\lpn$.
\end{corollary}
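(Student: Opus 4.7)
The plan is to reduce the statement to the homogeneous case already handled in Theorem~\ref{hom_rec}. By Theorem~\ref{tensor-poly}, the translation covariant valuation $\oZ$ decomposes as
\[
\oZ(kP)=\sum_{i=0}^{n+r}\oZ_i(P)k^i
\]
for $k\in\N$ and $P\in\lpn$, where each $\oZ_i$ is an $i$-homogeneous translation covariant valuation. Specializing to $k=1$, I would first observe that
\[
\oZ(P)=\sum_{i=0}^{n+r}\oZ_i(P)
\]
for every $P\in\lpn$.

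Next, I would apply this identity to every face $F$ of $P$ and use the definition \eqref{intF} of $\oZ^\circ$: since summation over faces commutes with a finite sum of functions,
\[
\oZ^\circ(P)=\sum_{F}(-1)^{\dim F}\oZ(F)=\sum_{i=0}^{n+r}\sum_{F}(-1)^{\dim F}\oZ_i(F)=\sum_{i=0}^{n+r}\oZ_i^\circ(P).
\]
This is essentially the observation that the operation $\oZ\mapsto\oZ^\circ$ is linear in $\oZ$.

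Finally, since each $\oZ_i$ is $i$-homogeneous and translation covariant, Theorem~\ref{hom_rec} applies and yields $\oZ_i^\circ(P)=(-1)^i \oZ_i(-P)$. Combining this with the previous display gives
\[
\oZ^\circ(P)=\sum_{i=0}^{n+r}(-1)^{i}\oZ_i(-P),
\]
which is the desired identity. There is no real obstacle here; the only point to verify carefully is that the homogeneous components $\oZ_i$ produced by Theorem~\ref{tensor-poly} inherit translation covariance (so that Theorem~\ref{hom_rec} is applicable to each of them), but this is exactly what that theorem asserts.
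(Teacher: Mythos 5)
Your argument is correct and is exactly the paper's intended route: the paper labels this a ``simple consequence of Theorem~\ref{hom_rec},'' meaning precisely the reduction you give --- take the decomposition $\oZ=\sum_{i=0}^{n+r}\oZ_i$ from Theorem~\ref{tensor-poly}, use linearity of $\oZ\mapsto\oZ^\circ$, and apply Theorem~\ref{hom_rec} to each $i$-homogeneous translation covariant component $\oZ_i$.
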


Combined with (\ref{intP}), this gives the following result.

\begin{corollary}
If \,$\oZ:\lpn \to \T^r$ is a translation covariant  valuation, then
\begin{equation*}
 \oZ(\relint P) = (-1)^{\dim(P)} \sum_{i=0}^{n+r} (-1)^{i} \oZ_i(-P)
\end{equation*}
for $P\in\lpn$.
\end{corollary}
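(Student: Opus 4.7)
The plan is to combine two ingredients already available in the excerpt: equation (\ref{intP}), which expresses $\oZ(\relint P)$ as a signed alternating sum over the non-empty faces of $P$, and the previous corollary, which gives a reciprocity formula for $\oZ^\circ$ in terms of the homogeneous components $\oZ_i(-P)$.

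First, I would recall the definition of $\oZ^\circ$ from (\ref{intF}), namely $\oZ^\circ(P)=\sum_F(-1)^{\dim F}\oZ(F)$, where the sum runs over all non-empty faces of $P$. Comparing this with (\ref{intP}), the relation
\[
\oZ(\relint P)=(-1)^{\dim(P)}\,\oZ^\circ(P)
\]
is immediate, since the two right-hand sides differ only by the overall sign $(-1)^{\dim(P)}$. This step requires no hypothesis beyond $\oZ$ being a valuation on $\lpn$, so it applies in our translation covariant setting.

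Next, I would invoke the preceding corollary, whose hypotheses are met: $\oZ$ is translation covariant by assumption, and the discussion following (\ref{intF}) already established that $\oZ^\circ$ inherits translation covariance from $\oZ$ together with the identification $(\oZ^\circ)^{r-j}=(\oZ^{r-j})^\circ$. That corollary yields
\[
\oZ^\circ(P)=\sum_{i=0}^{n+r}(-1)^i\oZ_i(-P).
\]
Substituting this expression into the identity from the first step gives the claimed formula.

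There is no substantive obstacle here; the statement is essentially a packaging of the two preceding results. The only thing to be careful about is bookkeeping of signs: the factor $(-1)^{\dim(P)}$ comes from rewriting the face sum as $\oZ^\circ$, and the factor $(-1)^i$ comes from the reciprocity applied to $\oZ^\circ$, and these two signs enter multiplicatively and independently in the final formula.
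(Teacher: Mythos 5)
Your proposal is correct and follows the same route the paper takes: observe from (\ref{intF}) and (\ref{intP}) that $\oZ(\relint P)=(-1)^{\dim(P)}\oZ^\circ(P)$, then substitute the formula for $\oZ^\circ(P)$ from the preceding corollary. This is precisely what the paper means by ``Combined with (\ref{intP}), this gives the following result.''
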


\noindent
So, in particular, using that $\oL^r(-P)=(-1)^r \oL^r(P)$ and that $\oL_{i+r}^r(P)=0$ for $\text{dim}(P)<i\leq n$, which is shown in Lemma \ref{coefficients} below, we obtain Theorem~\ref{tensor_reciprocity}. Note that the results in this section for translation covariant tensor valuations also hold for  translative polynomial valuations on lattice polytopes taking values in a rational vector space. The proofs remain the same.

\goodbreak

\section{Vector valuations}

For  a lattice polytope $P\in\lpn$, the discrete Steiner point $\oL_1^1(P)$ was introduced in \cite{BoeroeczkyLudwig}.
 The valuation $P\mapsto \oL_1^1(P)$  has a translation property that we refer to as translation equivariance. In general, $\oZ: \lpn\to \R^n$ is called \emph{translation equivariant} if $\oZ(P+x)= \oZ(P)+x$ for $x\in\Z^n$ and $P\in\lpn$.

\begin{theorem}[B\"or\"oczky \& Ludwig \cite{BoeroeczkyLudwig}] \label{dst}
A function $\,\oZ: \lpn\to \R^n$ is an $\,\slnz$ and translation equivariant valuation if and only if 
$\,\oZ=\oL_1^1$.
\end{theorem}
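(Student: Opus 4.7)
The plan is first to verify the easy direction and then reduce uniqueness to evaluation on the standard simplices. For the forward direction, $\oL_1^1$ is $\slnz$-equivariant by Corollary~\ref{cor:38}, and applying (\ref{L_q^r}) with $l=r=1$ yields
\[
\oL_1^1(P+y) = \oL_1^1(P) + \oL_0(P)\, y = \oL_1^1(P) + y
\]
for non-empty $P \in \lpn$, since $\oL_0 \equiv 1$ there.

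For the converse, given an $\slnz$-equivariant and translation-equivariant valuation $\oZ \colon \lpn \to \R^n$, set $\oZ' = \oZ - \oL_1^1$. Then $\oZ'$ is $\slnz$-equivariant and translation \emph{invariant}, since the linear shifts $+y$ on $\oZ$ and $\oL_1^1$ cancel. By Corollary~\ref{BetkeKneserinvariance}, it then suffices to show $\oZ'(T_k) = 0$ for $k = 0, 1, \dots, n$. Lemma~\ref{high_coord}, applied to the $\R^n$-valued (rank-$1$) valuation $\oZ'$, confines $\oZ'(T_k)$ to $\operatorname{span}(e_1, \dots, e_k)$, so the case $k=0$ is immediate.

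For $k \geq 3$, the even permutations of $\{e_1, \dots, e_k\}$ lie in $\slnz$, stabilize $T_k$, and act transitively on these basis vectors, forcing $\oZ'(T_k) = c\,(e_1 + \cdots + e_k)$ for some $c \in \R$. Combining this with translation invariance and an explicit $\phi \in \slnz$ realizing $T_k \mapsto T_k - e_1$ (for instance the map $e_1 \mapsto e_2 - e_1$, $e_2 \mapsto -e_1$, $e_j \mapsto e_j - e_1$ for $j \geq 3$, which has determinant $+1$) produces the identity $c\,\phi(e_1+\cdots+e_k) = c\,(e_1+\cdots+e_k)$, whose only solution is $c = 0$.

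The main obstacle lies in the low-rank cases $k=1$ and $k=2$, where the even-permutation stabilizer is trivial. For $k=1$, I would combine translation invariance with the $\slnz$-element that negates $e_1$ and $e_2$ (and fixes the remaining basis vectors) to deduce $\oZ'(T_1) = -\oZ'(T_1)$. For $k=2$, the key observation is that $T_2 - e_1 = \phi\, T_2$ for $\phi = \bigl(\begin{smallmatrix} -1 & -1 \\ 1 & 0 \end{smallmatrix}\bigr) \in \slz{2}$ (extended as the identity on $e_3, \dots, e_n$); this $\phi$ has order three and no real eigenvalue equal to $1$, so the vector $\oZ'(T_2) \in \operatorname{span}(e_1, e_2)$, being fixed by $\phi$, must vanish. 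Handling $k=2$ through this arithmetic feature of $\slz{2}$ rather than by symmetry counting is the principal subtlety of the argument.
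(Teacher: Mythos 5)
The paper does not supply its own proof of Theorem~\ref{dst}; it is stated as a quoted result and cited directly from \cite{BoeroeczkyLudwig}, so there is no in-paper argument to compare against. Evaluated on its own terms, your proof is correct, and it only uses tools stated in the paper: Corollary~\ref{cor:38} and equation~(\ref{L_q^r}) for the forward direction, then Corollary~\ref{BetkeKneserinvariance} and Lemma~\ref{high_coord} for the converse. After setting $\oZ' = \oZ - \oL_1^1$ and observing translation invariance, reducing to $\oZ'(T_k)=0$ for $k=0,\dots,n$ is exactly what Corollary~\ref{BetkeKneserinvariance} allows, and Lemma~\ref{high_coord} (applied with $m=k+1,\dots,n$) correctly places $\oZ'(T_k)\in\operatorname{span}(e_1,\dots,e_k)$. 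Your case-by-case elimination checks out: for $k=0$ it is immediate; for $k=1$, negating $e_1,e_2$ sends $T_1$ to $T_1-e_1$ and flips the sign of the first coordinate; for $k=2$, the matrix $\bigl(\begin{smallmatrix}-1&-1\\1&0\end{smallmatrix}\bigr)$ does satisfy $\phi T_2 = T_2-e_1$, has determinant $1$, and has eigenvalues $e^{\pm 2\pi i/3}$, so its only fixed vector in $\operatorname{span}(e_1,e_2)$ is zero; and for $k\geq 3$, the alternating group acts transitively, forcing $\oZ'(T_k)=c(e_1+\cdots+e_k)$, and your explicit $\phi$ with $\phi T_k=T_k-e_1$ and $\det\phi=1$ yields $c(1+k)e_1=0$, hence $c=0$. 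You correctly isolate the $k=2$ case as the subtle one, since the symmetry-counting argument degenerates there and only the arithmetic of $\slz{2}$ (an order-three element with no eigenvalue~$1$) closes it. One minor point worth stating explicitly: the theorem, as phrased, is only meaningful for $n\ge 2$ (as the paper itself notes just after Theorem~\ref{vector}), since for $n=1$ one could add any translation invariant vector valuation to $\oL_1^1$; your argument tacitly assumes $n\ge 2$ throughout (e.g., the element negating $e_1,e_2$ and Lemma~\ref{high_coord} both require it), which matches the intended scope.
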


\noindent This result is the key ingredient in the classification of $\slnz$ equivariant and translation covariant vector valuations, Theorem~\ref{vector}.

\goodbreak
\subsection{Proof of Theorem~\ref{vector}}
Since $\oZ$ is translation covariant, there is $\oZ^0:\lpn \to \R$ such that
\[
\oZ(P+y)=\oZ(P)+\oZ^0(P) y
\]
for $P\in\lpn$ and $y\in\Z^n$. It follows that $\oZ^0$ is an $\slnz$ and translation invariant valuation. By the Betke \& Kneser Theorem, there are constants $c_0, \dots, c_{n}\in\R$ such that
\begin{equation*}
\oZ^0 =\sum_{i=0}^n c_{i}\, \oL_i.
\end{equation*}
Set
\[
\widetilde{\oZ} = \oZ - \sum_{i=1}^{n+1} c_{i-1} \oL^1_{i}.
\]
Note that (\ref{L_q^r}) applied to $\oL_i^1$ gives $\oL_i^1(P+y)= \oL_i^1(P)+\oL_{i-1}(P) y$. Therefore, we obtain
\begin{eqnarray*}
\widetilde{\oZ}(P+y) &=&\oZ(P+y) - \sum_{i=1}^{n+1} c_{i-1} \oL^1_{i}(P+y)\\
&=& \oZ(P) + \oZ^0(P) y -\sum_{i=1}^{n+1} c_{i-1} (\oL^1_{i}(P) +\oL_{i-1}(P)y)\\
&=& \oZ(P) +\sum_{i=0}^{n} c_{i}\oL_{i}(P)y - \sum_{i=1}^{n+1} c_{i-1}\oL^1_{i}(P)   - \sum_{i=0}^{n} c_{i}\oL_{i}(P)y\\
&=& \widetilde{\oZ}(P). 
\end{eqnarray*}
Hence $\widetilde{\oZ}$ is translation invariant and $\widetilde{\oZ}+\oL_1^1$ is $\slnz$ and  translation equivariant. Thus, Theorem \ref{dst} shows that
$\widetilde{\oZ}(P)=0$ for all $P\in \lpn$.

\goodbreak
\section{Translation invariant valuations}\label{simple}
The classification of  $\slnz$ equivariant and translation invariant tensor valuations turns out to be the main tool in our classification of $\slnz$ equivariant and translation covariant tensor valuations. In this section, we show that the only $n$-homogeneous, translation invariant tensor valuation that intertwines $\slnz$ is the trivial tensor; that is, the tensor that vanishes identically.  We also offer some definitions and include key lemmas on general translation invariant valuations here that will be applied to tensor valuations. 

\goodbreak
The following result corresponds to  Hadwiger's result \cite[Satz XIV]{Hadwiger:V} on polytopes and  is a direct consequence of a result by McMullen \cite[Theorem~1]{McMullen1978/79}. 

\begin{theorem} \label{XIV}
If $\,\oZ: \lpn\to \R$ is a trans\-lation invariant and $n$-homogeneous valuation, then there exists  $a\in\R$ such that
$$\oZ(P)=a\,V_n(P)$$ for every $P\in\lpn$.
\end{theorem}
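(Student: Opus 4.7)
The plan is to reduce the statement to McMullen's classification of translation-invariant valuations on lattice polytopes, using the homogeneous decomposition machinery already set up in Section~\ref{sec_ehrhart}.

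First, I would apply Theorem~\ref{Polynomial} with $d=0$: a translation-invariant valuation is translative polynomial of degree zero, so there exist $i$-homogeneous translation-invariant valuations $\oZ_0,\ldots,\oZ_n:\lpn\to\R$ satisfying
\[
\oZ(kP)=\sum_{i=0}^n \oZ_i(P)\,k^i
\]
for every $k\in\N$ and $P\in\lpn$. The hypothesis of $n$-homogeneity says $\oZ(kP)=k^n\oZ(P)$, and comparing coefficients of these two polynomial expressions in $k$ forces $\oZ_i\equiv 0$ for $i\neq n$ and $\oZ_n=\oZ$.

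Next, I would invoke \cite[Theorem 1]{McMullen1978/79}, which identifies the $n$-homogeneous component of any translation-invariant valuation on $\lpn$ as a real multiple of the lattice volume $V_n$. Applied to $\oZ_n=\oZ$ this gives $\oZ(P)=a\,V_n(P)$ for some $a\in\R$ and all $P\in\lpn$, which is the claim.

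The essential work here is done inside McMullen's theorem: one has to show that an $n$-homogeneous, translation-invariant valuation is simple (vanishes on lattice polytopes of dimension strictly less than $n$) and then deduce proportionality to $V_n$ via a dissection into unimodular $n$-simplices, each of which has volume $1/n!$. Since that step is what \cite{McMullen1978/79} supplies, our contribution is merely to package the homogeneous decomposition and read off the top-degree piece; no additional obstacle arises on the lattice-polytope side beyond what McMullen's paper already handles.
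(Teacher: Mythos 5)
Your proposal is correct and follows essentially the same route as the paper, which likewise states Theorem~\ref{XIV} as a direct consequence of \cite[Theorem~1]{McMullen1978/79} without further argument. The homogeneous-decomposition preamble via Theorem~\ref{Polynomial} is harmless but redundant here, since an $n$-homogeneous valuation is trivially its own top-degree component.
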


\goodbreak
\noindent
The argument can easily be modified for tensor valuations by substituting a tensor $A\in\T^r$ for the constant $a\in\R$. Therefore, we immediately obtain the following corollary of Theorem~\ref{XIV}.
 
\begin{corollary} \label{tensor_invariant}
If $\,\oZ: \lpn\to \T^r$ is a trans\-lation invariant and $n$-homogeneous valuation, then there exists $A\in \T^r$ such that
$$\oZ(P)=A\,V_n(P) $$ for every $P\in\lpn$.
\end{corollary}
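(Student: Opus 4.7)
The plan is to reduce to the scalar case by evaluating the tensor $\oZ(P)$ on fixed vectors. For any choice of $v_1,\dots,v_r\in\R^n$, consider the real-valued function
\[
\oZ_{v_1,\dots,v_r}: \lpn \to \R,\qquad P\mapsto \oZ(P)(v_1,\dots,v_r).
\]
Since addition in $\T^r$ is pointwise, scalar multiplication by $k^n$ is pointwise, and translation of $P$ does not affect the evaluation at $(v_1,\dots,v_r)$, the function $\oZ_{v_1,\dots,v_r}$ inherits the valuation property, $n$-homogeneity, and translation invariance directly from $\oZ$. Theorem~\ref{XIV} then produces a scalar $a(v_1,\dots,v_r)\in\R$ with
\[
\oZ(P)(v_1,\dots,v_r)=a(v_1,\dots,v_r)\,V_n(P)\qquad\text{for all }P\in\lpn.
\]

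Next, I would show that $a$ is itself a symmetric $r$-linear form, i.e., an element of $\T^r$. Fix any lattice polytope $P_0\in\lpn$ with $V_n(P_0)\neq 0$, for example a sufficiently large dilate of $T_n$. Evaluating the identity above at $P_0$ and dividing yields
\[
a(v_1,\dots,v_r)=\frac{\oZ(P_0)(v_1,\dots,v_r)}{V_n(P_0)},
\]
and the right-hand side is symmetric and multilinear in $(v_1,\dots,v_r)$ because $\oZ(P_0)\in\T^r$ is. Setting $A:=a$, we have $A\in\T^r$, and the identity $\oZ(P)(v_1,\dots,v_r)=(AV_n(P))(v_1,\dots,v_r)$ holds for all $v_1,\dots,v_r\in\R^n$, so $\oZ(P)=A\,V_n(P)$ for every $P\in\lpn$.

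There is no real obstacle here; the only thing to verify with care is that the scalar $a(v_1,\dots,v_r)$ assembled coordinatewise from Theorem~\ref{XIV} fits together into a single tensor, and this is immediate once one notes that choosing a witness polytope $P_0$ exhibits $a$ as a normalized evaluation of the fixed tensor $\oZ(P_0)$.
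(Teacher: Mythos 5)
Your proposal is correct, and it is essentially the argument the paper has in mind: the paper simply states that Theorem~\ref{XIV} can be ``easily modified'' by replacing the scalar constant $a$ with a tensor $A\in\T^r$, and your coordinatewise reduction together with the witness-polytope normalization is a careful way of making that modification precise. In particular, fixing a $P_0$ with $V_n(P_0)\neq 0$ to exhibit $A=\oZ(P_0)/V_n(P_0)$ as a genuine element of $\T^r$ is exactly the right way to assemble the scalar outputs of Theorem~\ref{XIV} into a single tensor.
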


\goodbreak
As volume is $\slnz$ invariant, Corollary \ref{tensor_invariant} makes  it natural to expect that the only valuation that is translation invariant, $n$-homogeneous, and, additionally, $\slnz$ equivariant is the trivial valuation. We show that this is the case.

\begin{proposition}\label{trans_van}
Let $r\geq 1$ and $n\geq 2$. If $\,\oZ: \lpn\to \T^r$ is an $\slnz\!$ equivariant, translation invariant, and $n$-homogeneous valuation, then $\oZ(P)=0$ for every $P\in\lpn$.
\end{proposition}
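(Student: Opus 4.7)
By Corollary \ref{tensor_invariant} there is a tensor $A\in\T^r$ with $\oZ(P)=A\,V_n(P)$ for every $P\in\lpn$. The plan is to extract from the $\slnz$ equivariance of $\oZ$ and the $\slnz$ invariance of $V_n$ that $A$ itself is $\slnz$ invariant, i.e.\ $A=A\circ\phi^t$ for every $\phi\in\slnz$: just apply $\oZ(\phi P)=\oZ(P)\circ\phi^t$ to any lattice polytope with nonzero volume, such as $P=T_n$, and use $V_n(\phi T_n)=V_n(T_n)$. The whole statement then reduces to showing that no nonzero symmetric tensor of rank $r\ge 1$ on $\R^n$ (with $n\ge 2$) can be $\slnz$ invariant in this sense.

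For this, associate to $A$ the homogeneous polynomial $p(y)=A(y,\dots,y)$ of degree $r$; invariance of $A$ amounts to $p(\phi^t y)=p(y)$ for all $\phi\in\slnz$. A standard shear now does the job. Taking $\phi^t=I+kE_{12}$ with $k\in\Z$, so that $\phi^t e_2=e_2+ke_1$ and $\phi^t e_i=e_i$ for $i\ne 2$, the identity $A=A\circ\phi^t$ evaluated on $r$ copies of $e_2$ gives
\[
A(e_2[r])=\sum_{l=0}^r\binom{r}{l}k^l A(e_1[l],e_2[r-l]).
\]
The right-hand side is a polynomial in $k$ that equals the constant $A(e_2[r])$ for every $k\in\Z$, so all coefficients of $k^l$ with $l\ge 1$ must vanish. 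In particular $p(e_1)=A(e_1[r])=0$.

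To finish, I exploit that, for $n\ge 2$, $\slnz$ acts transitively on primitive vectors of $\Z^n$: any primitive $v$ can be completed to a $\Z$-basis of $\Z^n$ of determinant one, giving some $\psi\in\slnz$ with $\psi e_1=v$. Hence $p(v)=p(\psi e_1)=p(e_1)=0$ for every primitive lattice vector $v$, and by homogeneity $p(v)=0$ for every $v\in\Z^n$. A polynomial on $\R^n$ vanishing on all of $\Z^n$ is identically zero, so $p\equiv 0$, and by polarization $A=0$, whence $\oZ\equiv 0$.

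The main conceptual step is the reduction to $\slnz$ invariance of the single tensor coefficient $A$; this is what makes Corollary~\ref{tensor_invariant} decisive. Once that is in hand, the only genuine worry is whether integer shears are strong enough to constrain $A$, and this is resolved by the elementary observation that a polynomial in $k$ of bounded degree which vanishes for all $k\in\Z$ must vanish identically; combined with transitivity of $\slnz$ on primitive lattice vectors, this replaces any recourse to representation theory of $\slnr$.
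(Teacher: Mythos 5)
Your proof is correct and follows the same core strategy as the paper: reduce via Corollary~\ref{tensor_invariant} to the statement that the only $\slnz$-invariant tensor in $\T^r$ (for $r\ge 1$, $n\ge 2$) is zero, and then exploit integer shears to kill that tensor. Where you differ is in the second half. The paper applies the shear $\phi^t e_j = e_j + m e_k$ directly to each family of coordinates $A(e_j[s],e_{l_{s+1}},\dots,e_{l_r})$ and expands the resulting polynomial in $m$ to make the coordinates vanish one group at a time. You instead pass to the associated diagonal restriction $p(y)=A(y,\dots,y)$, obtain $p(e_1)=0$ from a single shear, and close the argument by transitivity of $\slnz$ on primitive vectors of $\Z^n$, homogeneity, Zariski density of $\Z^n$, and polarization. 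Both routes are elementary and correct; yours trades the paper's somewhat terse coordinate bookkeeping for a more structural appeal to transitivity and polarization, which packages the argument cleanly and makes the role of $n\ge 2$ (needed for transitivity) transparent. One small point worth stating explicitly: the invariance you derive is $p(y)=p(\phi^t y)$ for $\phi\in\slnz$, so to conclude $p(v)=p(e_1)$ you need $v=\phi^t e_1$ for some $\phi\in\slnz$; this is unproblematic because $\slnz$ is closed under transposition, but the sentence as written takes $\psi e_1=v$ and silently identifies $\psi$ with a transpose.
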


\begin{proof}  By Corollary \ref{tensor_invariant}, there exists  $A\in\T^r$ such that $\oZ=V_n\, A$ on $\lpn$. For any $\phi\in\slnz$ and $v_1,\dots,v_r\in\rn$, this implies that
	$$V_n(P)\, A(v_1,\dots, v_r) =V_n(\phi P)\, A(v_1,\dots, v_r) = V_n(P) \,A(\phi^t v_1, \dots, \phi^t v_r)$$
as volume is $\slnz$ invariant and $\oZ$ is $\slnz$ equivariant. 

\goodbreak
We are left to show that the only fixed point of the action of $\slnz$ on the space of tensors is trivial. Let  $m\in\Z$ and $1\le s \le r$. If $e_1, \dots, e_n$ is a basis of $\rn$ and $j,k\in \{1,\ldots, n\}$, then, by setting $\phi^t e_j =  e_j +m\, e_k$ and $\phi^t e_l=e_l$ for $l\neq j$, we obtain a map $\phi \in \slnz$. As volume is invariant with respect to $\slnz$ transformations, for $j\neq l_{s+1}, \dots, l_r\in\{1,\dots,n\}$, we have
\begin{eqnarray*}
A(e_j [s], e_{l_{s+1}}, \dots, e_{l_r}) &=& A(e_j+ m\,e_k [s],  e_{l_{s+1}}, \dots, e_{l_r} )\\
&=& \sum_{i=0}^s \binom{s}{i} m^i A(e_j [s-i],e_k [i],  e_{l_{s+1}}, \dots, e_{l_r} ).
\end{eqnarray*}
Since $m$ is arbitrary, this implies that
$$A(e_k [s], e_{l_{s+1}}, \dots, e_{l_r})=\dots= A(e_j [s-1],e_k, e_{l_{s+1}}, \dots, e_{l_r})=0$$ 
completing the proof.
\end{proof}

\goodbreak
The Minkowski sum of $P, Q\in\lpn$ is $P+Q = \{x + y: x\in  P, y \in Q\}$. 
For $j=1, \dots, n$, a polytope $P\in\lpn$ is called a {\em $j$-cylinder} if there are proper independent linear subspaces $H_1,\dots, H_j$ of $\rn$ and lattice polytopes $P_i\subset H_i$ such that
$P = P_1 + \dots+P_j$. We denote by $\cZ_j(\Z^n)$  the class of $j$-cylinders and note  that $\cZ_n(\Z^n)\subset \dots \subset \cZ_1(\Z^n)=\lpn$. Observe that an $n$-cylinder is an $n$-dimensional parallelotope.

The following lemma can be found for convex polytopes in~\cite{KusejkoParapatits} and for lattice polytopes in \cite[Lemma 4]{McMullen77}. A valuation is called \emph{simple} if it vanishes on lower dimensional sets.

\begin{lemma}\label{zon}
If $\,\oZ:\lpn \to \R$ is a simple, translation invariant, $i$-homogeneous  valuation, then $\oZ(P)=0$ for every $P\in\cZ_j(\Z^n)$ when $j>i>0$.
\end{lemma}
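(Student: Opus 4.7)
The plan is to expand $\oZ$ as a polynomial along the direct-sum decomposition of the $j$-cylinder and combine homogeneity with simpleness via the pigeonhole principle.

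Write $P=P_1+\dots+P_j$ with $P_l\subset H_l$ and $H_1,\dots,H_j$ linearly independent proper subspaces of $\rn$. Since $\oZ$ is translation invariant (i.e., translative polynomial of degree $d=0$), Theorem~\ref{poly} gives that
\[
f(k_1,\dots,k_j):=\oZ(k_1P_1+\dots+k_jP_j)
\]
is a polynomial in $(k_1,\dots,k_j)\in\N^j$. From the $i$-homogeneity of $\oZ$, we have $f(tk_1,\dots,tk_j)=t^i f(k_1,\dots,k_j)$, so $f$ is homogeneous of total degree $i$,
\[
f(k_1,\dots,k_j)=\sum_{r_1+\dots+r_j=i}c_{r_1,\dots,r_j}\,k_1^{r_1}\cdots k_j^{r_j}.
\]

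Next, for any fixed index $l_0\in\{1,\dots,j\}$, I would substitute $k_{l_0}=0$. The resulting polytope $\sum_{l\neq l_0}k_l P_l$ lies in the subspace $\sum_{l\neq l_0}H_l$, whose dimension is at most $n-\dim H_{l_0}\le n-1$ by linear independence of the $H_l$ together with $H_{l_0}\neq\{0\}$. Hence this polytope has dimension strictly less than $n$, and simpleness of $\oZ$ forces $f(k_1,\dots,0,\dots,k_j)=0$ identically in the remaining variables. Comparing coefficients then shows that every $c_{r_1,\dots,r_j}$ with $r_{l_0}=0$ vanishes.

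The proof concludes by pigeonhole: since $r_1+\dots+r_j=i<j$, every multi-index appearing in the expansion has at least one zero component, so every coefficient $c_{r_1,\dots,r_j}$ vanishes. Hence $f\equiv 0$ and in particular $\oZ(P)=f(1,\dots,1)=0$. The only technical point worth double-checking is the dimension bound $\dim\sum_{l\neq l_0}H_l\le n-1$, which is the single place where properness of the subspaces enters; once that is in hand the combinatorial finish is short.
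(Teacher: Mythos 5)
The paper itself gives no proof of Lemma~\ref{zon}, citing \cite[Lemma 4]{McMullen77} (and \cite{KusejkoParapatits} for the convex case) instead, so there is no in-paper argument to compare against. Your proof is correct and is essentially the standard one used in those references: the Khovanski{\u\i}--Pukhlikov polynomiality (Theorem~\ref{poly}) makes $f(k_1,\dots,k_j)=\oZ(k_1P_1+\cdots+k_jP_j)$ a polynomial, $i$-homogeneity forces it to be homogeneous of degree $i$, simpleness kills every coefficient with some $r_{l_0}=0$ because $\sum_{l\neq l_0}H_l$ is a proper subspace (here the independence of the $H_l$ and $\dim H_{l_0}\ge 1$ are exactly what is needed), and the pigeonhole step using $i<j$ then annihilates every remaining coefficient. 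The one point you flagged --- the dimension bound $\dim\sum_{l\neq l_0}H_l\le n-1$ --- does indeed hold, since independence means the sum $H_1+\cdots+H_j$ is direct, so $\sum_{l\neq l_0}\dim H_l\le n-\dim H_{l_0}\le n-1$. The argument is complete.
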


\goodbreak
The following lemma can be found in~\cite{Schneider:CB2} for valuations on convex polytopes. Here, we provide a proof for lattice polytopes. Let $\V$ be a rational vector space.

\begin{lemma}\label{tinv_simple}
Let $\oZ:\lpn\rightarrow\V$ be a translation invariant valuation that is $i$-homogeneous for some $1\le i\le n$. If $P\in\lpn$ and $\dim( P)<i$, then $\oZ(P)=0$.
\end{lemma}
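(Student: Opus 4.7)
The plan is to exploit the tension between the $i$-homogeneity of $\oZ$ and the polynomial growth bound supplied by Khovanski{\u\i}--Pukhlikov. Setting $d:=\dim P<i$, I want to show that $k\mapsto\oZ(kP)$ is forced to be a polynomial in $k$ of degree at most $d$, which is incompatible with the $i$-homogeneous identity $\oZ(kP)=k^i\oZ(P)$ unless $\oZ(P)=0$.

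To produce this polynomial bound, I first reduce from ambient dimension $n$ to ambient dimension $d$. By translation invariance we may assume $0$ is a vertex of $P$, so that $L:=\aff(P)$ is a $d$-dimensional linear subspace of $\R^n$ and $\Lambda:=L\cap\Z^n$ is a lattice of rank $d$ in $L$. Choose a linear isomorphism $\psi\colon\R^d\to L$ with $\psi(\Z^d)=\Lambda$, and define $\widetilde{\oZ}\colon\lp{d}\to\V$ by $\widetilde{\oZ}(Q):=\oZ(\psi Q)$. Because $\psi$ is a linear bijection identifying the two lattices, $\widetilde{\oZ}$ inherits the valuation property, translation invariance (with respect to $\Z^d$) and $i$-homogeneity from $\oZ$; in particular it is translative polynomial of degree $0$.

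Applying Theorem~\ref{poly} to $\widetilde{\oZ}$ with one argument polytope, in ambient dimension $d$, then gives
\[
\widetilde{\oZ}(kQ)=\sum_{j=0}^{d}c_j\,k^j
\]
for some $c_0,\dots,c_d\in\V$ and every $k\in\N$. Taking $Q:=\psi^{-1}(P)$, we obtain that $\oZ(kP)=\widetilde{\oZ}(kQ)$ is a polynomial in $k$ of degree at most $d$. On the other hand, the $i$-homogeneity of $\oZ$ forces $\oZ(kP)=k^i\oZ(P)$, which is a polynomial of degree $i$ as soon as $\oZ(P)\ne0$. Since $d<i$, these two polynomial expressions in $k$ can coincide for every $k\in\N$ only if $\oZ(P)=0$.

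The argument has no real obstacle; the only care needed is to verify that $\widetilde{\oZ}$ is a genuine valuation on $\lp{d}$ and that Theorem~\ref{poly}, although stated for $\lpn$, applies verbatim in ambient dimension $d$. Both points are routine: the first follows from $\psi$ being an affine lattice isomorphism onto its image (so unions, intersections, and the empty set among lattice polytopes are preserved), and the second from the dimension-independent nature of Khovanski{\u\i}--Pukhlikov's proof.
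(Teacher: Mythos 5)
Your proof is correct and follows essentially the same route as the paper's: both reduce to a lattice subspace of dimension strictly less than $i$, invoke the Khovanski{\u\i}--Pukhlikov homogeneous decomposition (Theorem~\ref{Polynomial}) there to bound the degree of $k\mapsto\oZ(kP)$, and conclude by comparing with $\oZ(kP)=k^i\oZ(P)$. The only cosmetic difference is that you work in $\aff(P)$ of dimension $d=\dim P$ while the paper fixes an $(i-1)$-dimensional lattice subspace $H$ and then appeals to translation invariance and arbitrariness of $H$; the underlying contradiction with homogeneity is identical.
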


\begin{proof}
Let $H$ be an $(i-1)$-dimensional lattice subspace of $\R^n$. The restriction of $\oZ$ to  polytopes in $H\cap\lpn$ is a valuation on polytopes with vertices in the lattice $H\cap\Z^n$ which is invariant under the translations of $H$ into itself. The homogeneous decomposition from Theorem~\ref{Polynomial} states that this restricted $\oZ$ is a sum of valuations  homogeneous of degrees $0,\dots,i-1$. However, the valuation $\oZ$ is $i$-homogeneous implying that $\oZ(P)=0$ for $P\subset H$. The translation invariance of $\oZ$ together with the arbitrary choice of $H$ implies that $\oZ(P)=0$ for every  $P\in\lpn$ such that $\dim P<i$.
\end{proof}

\goodbreak
\section{Properties of the Ehrhart tensors}\label{coprop}

The Ehrhart tensors $\oL_i^r$ include, for $r=0$, and expand upon the Ehrhart coefficients. They are also the discrete analogues of the Minkowski tensors. By Propositions \ref{assoc1} and \ref{assoc2}, the Ehrhart tensors $\oL_i^r:\lpn\to \T^r$ are $\slnz$ equivariant and translation covariant valuations.  In this section, we derive further properties of these tensors and give a characterization for the leading Ehrhart tensor. 

Recall that the discrete moment tensor is the discrete analogue of the moment tensor defined in (\ref{moment_tensor}). On lattice polytopes, the moment tensor coincides with the leading Ehrhart tensor. The following result is well-known for $r=0$ (where $\oM^0=V_n$).

\begin{lemma}\label{coefficients}
For $P\in\lpn$, 
\[
\oL_{n+r}^{r}(P)=\oM^r(P)
\]
and $\oL^r_{i+r}(P)=0$ for $\dim(P)<i\leq n$. Moreover, $\oL^r_{i+r}$ is not simple for $0\le i<n$.
\end{lemma}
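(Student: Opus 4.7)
The plan is to prove the three claims in the order $(2),(1),(3)$, exploiting the polynomial identity $\oL^r(kP)=\sum_{i=0}^{n+r}\oL^r_i(P)k^i$ from Corollary~\ref{cor:38} so that each $\oL^r_i(P)$ can be read off from growth rates and limits in $k$.

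For $(2)$, set $m=\dim P$ and fix $v_1,\dots,v_r\in\R^n$. Since $|x\cdot v_j|\le k\,\max_{y\in P}\|y\|\,\|v_j\|$ for every $x\in kP$, the crude bound
\[
|\oL^r(kP)(v_1,\dots,v_r)|\le \frac{k^r}{r!}\,\oL(kP)\,\bigl(\max_{y\in P}\|y\|\bigr)^r\prod_{j=1}^r\|v_j\|
\]
holds. Theorem~\ref{Ehrhart}, together with the subsequent remark that $\oL_i$ vanishes on polytopes of dimension less than $i$, gives $\oL(kP)=O(k^m)$, so the left-hand side is $O(k^{m+r})$. Since the identity from Corollary~\ref{cor:38} is a polynomial identity in $k$, every coefficient of $k^l$ with $l>m+r$ must vanish when evaluated at $(v_1,\dots,v_r)$. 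Varying the $v_j$ then yields $\oL^r_{i+r}(P)=0$ whenever $i>m$.

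For $(1)$, the case $\dim P<n$ follows from $(2)$ on the left of $\oL^r_{n+r}(P)=\oM^r(P)$ and from the vanishing of $n$-dimensional Lebesgue measure on the right. For $\dim P=n$, dividing the polynomial identity by $k^{n+r}$ gives $\oL^r(kP)/k^{n+r}=\oL^r_{n+r}(P)+O(1/k)$; the substitution $y=x/k$ rewrites the left-hand side, evaluated at $(v_1,\dots,v_r)$, as the Riemann sum
\[
\frac{1}{r!\,k^n}\sum_{y\in P\cap\tfrac{1}{k}\Z^n}(y\cdot v_1)\cdots(y\cdot v_r),
\]
which converges to $\frac{1}{r!}\int_P(y\cdot v_1)\cdots(y\cdot v_r)\,dy=\oM^r(P)(v_1,\dots,v_r)$ as $k\to\infty$.

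For $(3)$, we exhibit a lattice polytope of dimension less than $n$ on which $\oL^r_{i+r}$ is non-zero. For $1\le i<n$, take the standard simplex $T_i$; repeating the argument of $(1)$ inside $\R^i$ shows that the coefficient of $k^{i+r}$ in $\oL^r(kT_i)(e_1[r])=\frac{1}{r!}\sum a_1^r$, summed over $a\in\N^i$ with $a_1+\cdots+a_i\le k$, equals $\frac{1}{r!}\int_{T_i}y_1^r\,dy>0$. For $i=0$, any non-zero lattice point $y$ satisfies $\oL^r(k\{y\})=k^r y^r/r!$, so $\oL^r_r(\{y\})=y^r/r!\ne 0$. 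The main obstacle is the growth estimate in $(2)$; once that is in hand, both $(1)$ and $(3)$ reduce to standard Riemann-sum computations.
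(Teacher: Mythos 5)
Your proof is correct and follows essentially the same approach as the paper's: the core of both is the Riemann-sum computation of $\oL^r_{n+r}$ and the observation that the polynomial $\oL^r(kP)$ in $k$ can have degree at most $\dim P + r$. The only stylistic difference is that the paper establishes the degree bound by the terse remark ``consider the affine hull of $P$'' and then recycles the Riemann-sum argument there, whereas you make the degree bound explicit via the estimate $|\oL^r(kP)(v_1,\dots,v_r)|=O(k^{\dim P + r})$ using $\oL(kP)=O(k^{\dim P})$ — a slightly cleaner and more self-contained way to get the same conclusion; similarly you make the non-simplicity claim concrete with the explicit witnesses $T_i$ (for $1\le i<n$) and a nonzero lattice point (for $i=0$), where the paper just remarks it follows from the same reduction to the moment tensor.
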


\begin{proof}
By the definition of the Riemann integral, we have 
\begin{align*}
\oL^{r}_{n+r}(P)
&= \lim_{k\to\infty}\frac{\oL^{r}(kP)}{k^{n+r}}
= \frac1{r!}\, \lim_{k\to\infty}\,\,\frac{1}{k^n}\sum_{x\in kP\cap\Z^n}\frac{1}{k^r}x^{r}\\
&=\frac1{r!}\, \lim_{k\to\infty} \,\,\frac{1}{k^n}\!\sum_{x\in P\cap\tfrac{1}{k}\Z^n}x^{r}
=\frac1{r!}\, \int\limits_Px^{r}\,dx.
\end{align*}
This proves the statement for $\dim(P)=n$ and shows that $\oL_{n+r}(P)=0$ for $\dim(P)<n$. The statement for $\dim(P)<i$ follows by considering the affine hull of $P$ since $\oL^r_{i+r}$ is again proportional (with a positive factor) to the moment tensor calculated in this subspace. This also implies that $\oL^r_{i+r}$ is not simple for $0\le i<n$.
\end{proof}

Although our main interest in this article is the classification of $\slnz$ equivariant and translation covariant tensor valuations, we also obtain a characterization of translation covariant and $(n+r)$-homogeneous tensor valuations. In fact, by Lemma~\ref{coefficients}, they are equal to the moment tensor up to a scalar. In this simple result, which is analogous to Alesker's result on tensor valuations on convex bodies in~\cite{Alesker00a}, no $\slnz$ equivariance is assumed. 

\goodbreak
\begin{theorem}\label{hom_char}
If $\oZ:\lpn\rightarrow\T^r$ is a translation covariant, $(n+r)$-homogeneous valuation, then there is $c\in\R$ such that
\[
\oZ(P)=c\oL_{n+r}^r(P)
\]
for every $P\in\lpn$.
\end{theorem}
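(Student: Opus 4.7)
The plan is to peel off the associated functions of $\oZ$ level by level, beginning with $\oZ^0$. First, Proposition~\ref{assoc1} shows that the associated scalar $\oZ^0\colon\lpn\to\R$ is translation invariant, and Proposition~\ref{assoc2} shows that it is $n$-homogeneous. Theorem~\ref{XIV} then furnishes a constant $c\in\R$ with $\oZ^0=c\,V_n=c\,\oL_n^0$.

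Next I would subtract off the leading Ehrhart tensor: set $\widetilde{\oZ}=\oZ-c\,\oL_{n+r}^r$. This is again translation covariant and $(n+r)$-homogeneous, and reading off~(\ref{L_q^r}) applied to $\oL^r$ identifies the associated functions of $\oL_{n+r}^r$ as the tensors $\oL_{n+s}^s$ for $s=0,\ldots,r$; in particular $\widetilde{\oZ}^0=0$.

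The observation that drives the rest of the argument is: any translation invariant valuation on $\lpn$ that is $m$-homogeneous with $m>n$ must vanish identically. Indeed, Theorem~\ref{Polynomial} with $d=0$ yields a decomposition $\oZ(kP)=\sum_{i=0}^n\oZ_i(P)\,k^i$ for any translation invariant valuation on $\lpn$ with values in a rational vector space, whereas $m$-homogeneity demands $\oZ(kP)=k^m\,\oZ(P)$; comparing the coefficient of $k^m$ forces $\oZ(P)=0$.

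With this in hand I would induct on $j\in\{0,\ldots,r\}$ to show $\widetilde{\oZ}^j=0$. The base case $j=0$ is already handled. For the inductive step, assume $\widetilde{\oZ}^0=\cdots=\widetilde{\oZ}^{j-1}=0$. Then Proposition~\ref{assoc1} applied to $\widetilde{\oZ}^j$ gives
\[
\widetilde{\oZ}^j(P+y)=\sum_{k=0}^{j}\widetilde{\oZ}^{j-k}(P)\,\frac{y^k}{k!}=\widetilde{\oZ}^j(P),
\]
so $\widetilde{\oZ}^j$ is translation invariant. Proposition~\ref{assoc2} says that it is $(n+j)$-homogeneous, and since $n+j>n$ for $j\geq 1$, the vanishing observation forces $\widetilde{\oZ}^j=0$. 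Taking $j=r$ yields $\widetilde{\oZ}=0$, i.e., $\oZ=c\,\oL_{n+r}^r$. No serious obstacle is anticipated; the main work is simply bookkeeping with the associated functions once Theorem~\ref{XIV} and the vanishing observation derived from Theorem~\ref{Polynomial} are in place.
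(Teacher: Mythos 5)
Your proof is correct, but it organizes the argument differently from the paper. The paper's proof is an induction on the rank $r$ of the tensor valuations being classified: for the inductive step it applies the theorem itself (at rank $r-1$) to the associated function $\oZ^{r-1}$, which is translation covariant and $(n+r-1)$-homogeneous, obtaining $\oZ^{r-1}=c\,\oL_{n+r-1}^{r-1}$; then Proposition~\ref{assoc1} propagates this to all lower associated functions, $\widetilde{\oZ}=\oZ-c\,\oL_{n+r}^r$ is shown to be translation invariant in a single polynomial-comparison step, and Theorem~\ref{Polynomial} kills it. Your proof avoids the outer induction on rank entirely: you extract the constant $c$ once and for all from the scalar result (Theorem~\ref{XIV}) applied to $\oZ^0$, then run a single inner induction over the index $j$ of the associated functions, using at each stage that $\widetilde{\oZ}^j$ becomes translation invariant once the lower ones vanish and is $(n+j)$-homogeneous with $n+j>n$. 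Both routes rest on the same two pillars — $\oZ^0$ is a multiple of $V_n$, and a translation invariant valuation with homogeneity degree exceeding $n$ vanishes — but yours replaces the recursive appeal to the theorem statement with a self-contained bootstrapping over the associated functions, which is arguably a bit more elementary and makes the dependency structure clearer. A small expository gain of the paper's version is that the identity $\oZ^{r-j}=c\,\oL_{n+r-j}^{r-j}$ for all $j$ drops out immediately from Proposition~\ref{assoc1} once the top associated function is identified, whereas you need the inner induction to reach the same conclusion; conversely, your version does not need to track that all associated functions are multiples of the Ehrhart tensors, only that those of $\widetilde{\oZ}$ vanish.
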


\goodbreak
\begin{proof}
For $r=0$, the statement is the same as Theorem~\ref{XIV}.  Suppose the assumption is true for all translation covariant and $(n+i)$-homogeneous valuations that take values in tensors of rank $i<r$. Let $\oZ^{r-j}$ for $j=1,\dots, r$ be the associated functions of $\oZ$. By Proposition \ref{assoc2}, the associated function $\oZ^{r-1}$ is homogeneous of degree $n+r-1$. Hence, by the induction assumption, there is $c\in\R$ such that $\oZ^{r-1}=c\oL_{n+r-1}^{r-1}$. Note that it follows from  Proposition~\ref{assoc1} that 
\begin{equation}\label{cconst}
\oZ^{r-j}=c\,\oL^{r-j}_{n+r-j}
\end{equation}
for $j=1, \dots, r-1$. Consider the translation covariant and $(n+r)$-homogeneous valuation $\widetilde \oZ=\oZ-c\oL_{n+r}^r$. For $y\in\Z^n$, by the translation covariance of $\widetilde{\oZ}$ and $\oL_{n+r}^r$  and (\ref{cconst}), we obtain
\begin{align*}
\widetilde \oZ(P+y)&=\oZ(P+y)-c\oL_{n+r}^r(P+y)\\
&=\sum_{j=0}^r\oZ^{r-j}(P)\frac{y^j}{j!}-c\sum_{j=0}^r\oL_{n+r-j}^{r-j}(P)\frac{y^j}{j!}\\
&=\oZ(P)-c\oL_{n+r}^r(P)\\[2pt]
&=\widetilde \oZ(P).
\end{align*}
Therefore, the valuation $\widetilde\oZ$ is translation invariant. Theorem~\ref{Polynomial} implies that $\widetilde{\oZ}=0$ as non-trivial translation invariant valuations cannot be homogeneous of degree greater than $n$.
\end{proof}

The characterization of the first Ehrhart tensor is the key element in the classification of tensor valuations. 
We show, in Lemma~\ref{nontriv}, that it can only be simple in the planar case.
Faulhaber's formula oftentimes appears in the calculation of the discrete moment tensor of a lattice polytope as it does in Lemma~\ref{nontriv}. 
The formula was given by Bernoulli in {\em Ars Conjectandi} which was translated in~\cite{Bernoulli} although he fully attributed it to Faulhaber due to his formulas for sums of integral powers up to the 17th power~\cite{Faulhaber}. With the convention that $B_1=-\frac{1}{2}$ and that $B_{2i+1}=0$ for $i>0$, the formula is stated as
\begin{equation}\label{Faulhaber}
\sum_{i=1}^k i^r=\frac{1}{r+1}\sum_{l=0}^r(-1)^l\binom{r+1}{l}B_lk^{r+1-l}
\end{equation}
where $B_l$ are the Bernoulli numbers. We will use the following convolution identity for Bernoulli polynomials (see, e.g.,~\cite{AgohDilcher}) which, interestingly enough, is usually attributed to Leonhard Euler. For $n\geq 1$, the identity is
\begin{equation}\label{Euler}
\sum_{i=0}^n\binom{n}{i}B_iB_{n-i}=-nB_{n-1}-(n-1)B_n.
\end{equation}

\goodbreak
\begin{lemma}\label{nontriv}
For $n\geq 2$, the valuation $\oL_1^r$ is non-trivial. For $n=2$ and $r\geq 3$ odd, it is simple.
\end{lemma}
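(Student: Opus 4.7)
The plan is to handle the two assertions in turn, in each case reducing to an explicit lattice-point count evaluated via Faulhaber's formula \eqref{Faulhaber}.

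For non-triviality, I would test $\oL^r_1$ on the standard $2$-simplex $T_2=\operatorname{conv}(0,e_1,e_2)$, regarded as a subset of $\rn$ via its first two coordinates (available because $n\ge 2$). Counting lattice points of $kT_2$ yields
\begin{equation*}
\oL^r(kT_2)(e_1,\ldots,e_1)\;=\;\frac{1}{r!}\sum_{i=0}^{k}(k+1-i)\,i^{r}\;=\;\frac{1}{r!}\bigl((k+1)\,S_r(k)-S_{r+1}(k)\bigr),
\end{equation*}
where $S_j(k):=\sum_{i=1}^{k}i^{j}$. Expanding each $S_j$ by \eqref{Faulhaber} and reading off the coefficient of $k$ gives
\begin{equation*}
\oL^r_1(T_2)(e_1,\ldots,e_1)\;=\;\frac{(-1)^r\bigl(B_r+B_{r+1}\bigr)}{r!}.
\end{equation*}
Since $B_j=0$ only for odd $j\ge 3$, two consecutive Bernoulli numbers cannot simultaneously vanish, so this quantity is nonzero for every $r\ge 1$. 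Hence $\oL^r_1(T_2)\neq 0$.

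For the simplicity statement, assume $n=2$ and that $r\ge 3$ is odd. Equation \eqref{L_q^r} with $l=1$, combined with $\oL^{r-1}_0=0$ from \eqref{L0} (valid because $r-1\ge 2$), shows that $\oL^r_1$ is translation invariant; being also $1$-homogeneous, Lemma~\ref{tinv_simple} forces it to vanish on points. It remains to show that $\oL^r_1(P)=0$ for every lattice segment $P\subset\R^2$. Using translation invariance to place one endpoint of $P$ at the origin, $\slz{2}$ equivariance to map the primitive lattice direction of $P$ to $e_1$, and $1$-homogeneity to remove the remaining integer factor, the problem reduces to proving $\oL^r_1([0,e_1])=0$. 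As $[0,e_1]\in\lp{1}$, Lemma~\ref{high_coord} annihilates every component of the tensor $\oL^r_1([0,e_1])$ that involves $e_2$, leaving only the $e_1^{r}$-component. A single application of \eqref{Faulhaber} identifies this component as $(-1)^r B_r/r!$, which is zero because $r\ge 3$ is odd.

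The chief technical step is the Faulhaber-based coefficient extraction on $T_2$: carrying the expansion of $(k+1)S_r(k)-S_{r+1}(k)$ just far enough to isolate the $k^1$ coefficient and watching it collapse to the clean combination $(-1)^r(B_r+B_{r+1})/r!$. Once that identity is in hand, the standard vanishing/non-vanishing pattern of Bernoulli numbers at odd indices delivers non-triviality, and, in tandem with Lemma~\ref{high_coord}, the simplicity assertion.
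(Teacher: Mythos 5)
Your proof is correct and its core computation is genuinely different from the paper's. For the non-triviality part, the paper parametrizes the lattice-point count of $kT_2$ as $\sum_{j=1}^{k}\sum_{i=1}^{j} i^r$, applies Faulhaber's formula twice (first in $i$, then in $j$), obtaining a double sum for the coefficient of $k$, and then needs Euler's convolution identity~\eqref{Euler} to collapse that double sum to $(-1)^r(B_r+B_{r+1})$. You instead rewrite the same count as $(k+1)S_r(k)-S_{r+1}(k)$, apply Faulhaber once to each of $S_r$ and $S_{r+1}$, and read off the coefficient of $k$ directly as $(-1)^r B_r - (-1)^{r+1}B_{r+1}=(-1)^r(B_r+B_{r+1})$. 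This bypasses the Bernoulli convolution identity entirely and is a shorter and more elementary derivation of the same closed form (you also retain the $1/r!$ normalization that the paper silently drops, which is immaterial for the vanishing question). For simplicity, both arguments reduce to $\oL_1^r(T_1)(e_1[r])=(-1)^r B_r=0$ via Lemma~\ref{high_coord}; you spell out the reduction from a general segment to $T_1$ using translation invariance, $\slz{2}$ equivariance, and $1$-homogeneity, which the paper leaves implicit.

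One small logical gap: you conclude non-vanishing of $B_r+B_{r+1}$ for $r\ge 1$ from the observation that two consecutive Bernoulli numbers cannot both vanish. That argument only rules out the sum being zero when one of the two terms vanishes, which happens precisely when $r\ge 2$ (one of $r,\,r+1$ is then an odd index $\ge 3$). For $r=1$ both $B_1=-\tfrac12$ and $B_2=\tfrac16$ are nonzero, so you should verify cancellation doesn't occur by the direct computation $B_1+B_2=-\tfrac13\neq 0$, as the paper does. This is a one-line patch and does not affect the validity of your approach.
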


\begin{proof}
It suffices to prove that $\oL_1^r(T_2)(e_1[r])\neq0$ and, by Lemma~\ref{high_coord} for $n=2$ and $r\geq 3$ odd, that $\oL_1^r(T_1)(e_1[r])=~0$. 

For any $k\in\N$, we have
\begin{equation}\label{LT1}
\oL^r(kT_1)(e_1[r])=\sum_{x\in kT_1\cap\Z^n}(x\cdot e_1)\cdots(x\cdot e_1)=\sum_{i=1}^k i^r
\end{equation}
where the sum of the first $k$ powers of $r$ can be expressed through Faulhaber's formula~(\ref{Faulhaber}). By its homogeneous decomposition, Corollary~\ref{cor:38}, the first Ehrhart tensor is the coefficient of $k$, where $l=r$ in~(\ref{Faulhaber}), implying that $\oL_1^r(T_1)(e_1[r])=(-1)^rB_r$. As $B_r=0$ for $r=2m+1$ where $m\neq 0\in\N$, we obtain the second statement of the lemma.

Similarly to~(\ref{LT1}), for any $k\in\N$, we have
\[
\oL^r(kT_2)(e_1[r])=\sum_{x\in kT_2\cap\Z^n}(x\cdot e_1)\cdots(x\cdot e_1)=\sum_{j=1}^k\sum_{i=1}^j i^r.
\]
Applying Faulhaber's formula~(\ref{Faulhaber}) twice, the discrete moment tensor of $kT_2$ is
\begin{align}\label{faul}
\oL^r(kT_2)(e_1[r])&=\sum_{j=1}^k\sum_{i=1}^j i^r \nonumber\\
&=\frac{1}{r+1}\sum_{l=0}^r(-1)^l\binom{r+1}{l}B_l\sum_{j=1}^k j^{r+1-l}\\
&=\frac{1}{r+1}\sum_{l=0}^r\frac{(-1)^l B_l}{r+2-l}\binom{r+1}{l}\sum_{m=0}^{r+1-l}(-1)^m\binom{r+2-l}{m}B_mk^{r+2-l-m}. \nonumber
\end{align}
The value of $\oL_1^r(T_2)(e_1[r])$ is equal to the coefficient of $k$ in~(\ref{faul}); precisely the value when we set $m=r+1-l$. Hence
\begin{align}\label{faul2}
\oL_1^r(T_2)(e_1[r])&=\frac{(-1)^{r+1}}{r+1}\sum_{l=0}^{r}\binom{r+1}{l}B_lB_{r+1-l}.
\end{align}
Euler's identity~(\ref{Euler}) together with equation~(\ref{faul2}) and $B_0=1$ then gives
\[
\oL_1^r(T_2)(e_1[r])=(-1)^{r}\left(B_r+B_{r+1}\right)\neq 0
\]
as $-(B_1+B_2)=\frac{1}{3}$ and, for any $m\neq0\in\N$, $B_{2m}\neq 0$ and $B_{2m+1}=0$.
\end{proof}

\section{The classification of tensor valuations}

The main aim of this section is to prove Theorem~\ref{tensor}. We also obtain characterization results for the one-homogeneous component of the discrete moment tensor in Corollaries \ref{trans_inv} and \ref{mink_add} and construct a new $\slz{2}$ equivariant and translation invariant valuation $\oN: \lp{2}\to\T^9$ in Section \ref{new}.  We start with a discussion of simple tensor valuations in the planar case.

\goodbreak
\subsection{Simple tensor valuations on $\lp{2}$}\label{simple_plane_section}

We make the following elementary observation for valuations that vanish on the square $[0,1]^2$.

\begin{lemma}\label{even_tensor_simple_planar}
Let $r>1$ be even and let $\oZ:\lp{2}\rightarrow\T^r$ be a simple, $\slz{2}$ equivariant, and translation invariant valuation. If $\,\oZ([0,1]^2)=0$, then $\oZ(T_2)=0$.
\end{lemma}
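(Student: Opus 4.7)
The plan is to relate $T_2$ to $[0,1]^2$ by a dissection, and to exploit the parity of $r$ through the action of $-\operatorname{id} \in \slz{2}$. First, I would dissect the unit square along the anti-diagonal: writing $T_2 = \operatorname{conv}\{0, e_1, e_2\}$ and $T' = \operatorname{conv}\{e_1, e_2, e_1+e_2\}$, we have $[0,1]^2 = T_2 \cup T'$ with $T_2 \cap T'$ equal to the one-dimensional segment from $e_1$ to $e_2$. Since $\oZ$ is a simple valuation, the intersection contributes nothing, and so
\[
0 = \oZ([0,1]^2) = \oZ(T_2) + \oZ(T').
\]

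Next I would identify $T'$ with a translate of $-T_2$. A direct check on vertices shows $T' = -T_2 + (e_1+e_2)$, so translation invariance gives $\oZ(T') = \oZ(-T_2)$. Now the matrix $-\operatorname{id}$ lies in $\slz{2}$ because its determinant is $(-1)^2 = 1$, which is the key point where the hypothesis $n=2$ enters. Applying $\slz{2}$ equivariance with $\phi = -\operatorname{id}$ and using that $\phi^t v = -v$ gives
\[
\oZ(-T_2)(v_1, \dots, v_r) = \oZ(T_2)(-v_1, \dots, -v_r) = (-1)^r \oZ(T_2)(v_1, \dots, v_r),
\]
which equals $\oZ(T_2)(v_1, \dots, v_r)$ since $r$ is even. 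Combining with the dissection identity yields $0 = 2\,\oZ(T_2)$, and hence $\oZ(T_2) = 0$.

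The proof is quite short and there is no real obstacle: everything hinges on the three-way coincidence that in dimension two the antipodal map is in $\slz{2}$, that the even rank kills the resulting sign, and that the unit square admits a unimodular dissection into $T_2$ and a reflected translate of it. The simplicity hypothesis is used only to discard the diagonal segment, and the translation invariance only to move $T'$ onto $-T_2$.
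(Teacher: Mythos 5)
Your proof is correct and follows essentially the same route as the paper: dissect $[0,1]^2$ into $T_2$ and a translate of $-T_2$, use simplicity and translation invariance to get $\oZ(T_2)+\oZ(-T_2)=0$, and observe that $-\operatorname{id}\in\slz{2}$ together with even $r$ gives $\oZ(-T_2)=\oZ(T_2)$. The paper states this dissection argument more tersely, but the underlying mechanism is identical.
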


\begin{proof}
The square $[0,1]^2$ can be dissected into $T_2$ and a translate of $-T_2$. Therefore, we obtain
\[
\oZ(T_2)+\oZ(-T_2)=(1+(-1)^r)\oZ(T_2)=0
\]
which implies that $\oZ(T_2)=0$.
\end{proof}
\goodbreak

We also require the following result.

\begin{lemma}\label{odd_tensor_simple_planar}
Let $1<r<8$ be odd and let $\oZ:\lp{2}\rightarrow\T^r$ be a simple, $\slz{2}$ equivariant, and translation invariant valuation. If $\,\oZ([0,1]^2)=0$, then there exists $c\in\R$ such that $\oZ(T_2)=c\oL_1^r(T_2)$.
\end{lemma}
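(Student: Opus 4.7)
The plan is to show that $\oZ(T_2)$ is forced to lie in a one-dimensional subspace of $\T^r(\R^2)$, which is necessarily spanned by $\oL_1^r(T_2)$.

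First, I would reduce to the $1$-homogeneous case via the homogeneous decomposition of Theorem~\ref{tensor-poly}. Write $\oZ = \oZ_0 + \oZ_1 + \oZ_2$, with each $\oZ_i$ an $i$-homogeneous, $\slz{2}$-equivariant, translation invariant valuation. Simplicity gives $\oZ(\{0\}) = 0$; evaluating the decomposition at $k = 0$ yields $\oZ_0 \equiv 0$. Proposition~\ref{trans_van} (applied with $n = 2$, $r \geq 1$) forces $\oZ_2 \equiv 0$. Hence $\oZ = \oZ_1$ is $1$-homogeneous, and Proposition~\ref{linear} gives Minkowski additivity. Applying Lemma~\ref{zon} componentwise then shows that $\oZ$ vanishes on every lattice parallelogram.

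Next, set $a_s = \oZ(T_2)(e_1[r-s], e_2[s])$ for $s = 0, \ldots, r$ and extract linear relations. The cyclic permutation of the vertices of $T_2$ is realised by a translation together with a $\phi \in \slz{2}$ of order three (namely the linear part of $x \mapsto Ax + e_1$ with $A = \bigl(\begin{smallmatrix} -1 & -1 \\ 1 & 0 \end{smallmatrix}\bigr)$), so $\slz{2}$-equivariance gives $\oZ(T_2) = \oZ(T_2) \circ A^t$ and hence the three-fold relations
\[
 a_s = (-1)^{r-s} \sum_{j=0}^{r-s} \binom{r-s}{j} a_j \qquad (s = 0, \ldots, r).
\]
Further constraints come from the vanishing of $\oZ$ on lattice parallelograms. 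For various $k \in \Z$ I would consider parallelograms such as $[0,1]\, e_1 + [0,1](k e_1 + e_2)$ (which are unimodular) as well as $[0,1](k e_1) + [0,1](e_1 + e_2)$ (which have area $k$, and whose $4k$-triangle dissection no longer cancels automatically), dissect each into unimodular triangles via both diagonals, and express $\oZ$ on every sub-triangle as $\oZ(T_2) \circ \psi^t$ for the appropriate $\psi \in \slz{2}$. Setting each such $\oZ$ on a parallelogram to zero produces additional linear equations in the $a_s$. Equivalently, using that every simple, $1$-homogeneous, translation invariant valuation on $\lp{2}$ can be written as $\oZ(P) = \sum_e \ell(e)\, w(n_e)$ with an odd edge weight $w$ on primitive integer vectors, the transitivity of $\slz{2}$ on primitive vectors reduces $w$ to $w(e_1)$, and invariance under the unipotent stabiliser $U = \{(\begin{smallmatrix}1 & k \\ 0 & 1\end{smallmatrix}) : k \in \Z\}$ of $e_1$ forces $w(e_1)(e_1[r-s], e_2[s]) = 0$ for all $s < r$, leaving a single free parameter.

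Finally, $\oL_1^r$ itself satisfies all the hypotheses of the lemma: it is $\slz{2}$-equivariant and $1$-homogeneous by Corollary~\ref{cor:38}, translation invariant by~(\ref{L0}) since $r \geq 2$, and simple with $\oL_1^r(T_2)(e_1[r]) \neq 0$ by Lemma~\ref{nontriv}. Thus $\oL_1^r(T_2)$ is a non-zero element of the one-dimensional solution space identified above, and choosing $c$ so that $\oZ(T_2)(e_1[r]) = c\,\oL_1^r(T_2)(e_1[r])$ forces $\oZ(T_2) = c\,\oL_1^r(T_2)$. The main obstacle is the dimension count in the second paragraph: exhibiting a concrete finite collection of parallelogram relations whose combination with the three-fold relations has rank $r$ in each of the cases $r \in \{3, 5, 7\}$. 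I expect this to reduce to verifying the non-singularity of a specific $(r+1)\times(r+1)$ matrix by symbolic computation, in the same spirit as the remark preceding Theorem~\ref{tensor}.
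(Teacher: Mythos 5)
Your approach is genuinely different from the paper's, and it contains a real gap that you yourself flag. The paper's proof hinges on a step you do not take: it decomposes $\oZ$ into $\glz2$-even and $\glz2$-odd parts $\oZp$, $\oZm$ using the swap $\vartheta$ (which has determinant $-1$ and hence is \emph{not} available as a direct $\slz2$ constraint), and then combines the cyclic relation from $\phi$ with the resulting relations $a_{r_1}=a_{r-r_1}$ and $b_{r_1}=-b_{r-r_1}$. Your cyclic relations alone pin down $\oZ(T_2)$ only up to a $2$-dimensional subspace of $\T^r(\R^2)$ (for $r=3,5,7$ the $\phi^t$-fixed part of $\mathrm{Sym}^r(\R^2)$ is $2$-dimensional), and you are hoping to supply the missing relation via vanishing on lattice parallelograms. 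But the square constraint $\oZ([0,1]^2)=0$ is vacuous for odd $r$ (since $[0,1]^2$ dissects into $T_2$ and a translate of $-T_2$), and the more general parallelogram constraints you propose are never written down and checked — you say so explicitly. The alternative, representing every simple, $1$-homogeneous, translation invariant valuation on $\lp2$ by odd edge weights $\oZ(P)=\sum_e\ell(e)\,w(d_e)$, would indeed finish the argument cleanly, but that representation is a nontrivial claim for lattice polygons that is not proved in the paper and not established by you; it cannot be taken for granted. Note also a small indexing slip: with your convention $a_s=\oZ(T_2)(e_1[r-s],e_2[s])$, the unipotent stabiliser of $e_1$ forces $w(e_1)(e_1[r-s],e_2[s])=0$ for $s>0$, not $s<r$, so the surviving free coordinate is $w(e_1)(e_1[r])$ and not $w(e_1)(e_2[r])$; harmless, but worth fixing.

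On the other side, observe that the paper's proof uses \emph{fewer} hypotheses than you do: it never invokes simplicity, $1$-homogeneity, or $\oZ([0,1]^2)=0$, only $\slz2$-equivariance and translation invariance, because the extra information comes from the $\glz2$-decomposition. Your proof reduces to the $1$-homogeneous case first and leans on Lemma~\ref{zon} and Minkowski additivity, so it really needs the parallelogram machinery to work, whereas the paper's argument is symmetry-only modulo the explicit rank check of the linear system built from $\phi$ and $\vartheta$. In the end both proofs come down to "a concrete $(r{+}1)\times(r{+}1)$ matrix has the right rank", but the paper exhibits that system explicitly, while you leave it at the level of "I expect this to reduce to a symbolic computation." Until that system (or the edge-weight representation) is written down and verified, the proposal is incomplete.
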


\begin{proof}
Let $\varepsilon\in\{0,1\}$.  Following \cite{HaberlParapatits_tensor}, a valuation $\oZ$ is called $\glz{2}$-$\varepsilon$-equivariant, if 
$$\oZ(\phi P)= (\det \phi)^\varepsilon \oZ(P)\circ \phi^t$$ 
for all $\phi\in\glz{2}$ and $P\in\lp{2}$, where $\det$ stands for determinant. 

\goodbreak
Let $\vartheta\in \glz{2}$ be the transform that swaps $e_1$ with $e_2$ and hence has $\det \vartheta =-1$. Defining, as in \cite{HaberlParapatits_tensor}, the valuations $\oZp$ and $\oZm$  for $P\in\lp{2}$ by
\begin{eqnarray*}
\oZp(P)&=& \tfrac12 \big( \oZ(P) + \oZ(\vartheta^{-1} P)\circ \vartheta^t\big),\\
\oZm(P)&=& \tfrac12 \big( \oZ(P) - \oZ(\vartheta^{-1} P)\circ \vartheta^t\big),
\end{eqnarray*}
we see that 
$\oZp$ is $\glz{2}$-$\varepsilon$-equivariant with $\varepsilon=0$ and that $\oZm$ is $\glz{2}$-$\varepsilon$-equivariant with $\varepsilon=1$. Indeed, if $\phi\in\slz{2}$ and $P\in\lp{2}$, then
\begin{eqnarray*}
\oZp(\phi P)&=& \tfrac12 \big( \oZ(\phi P) + \oZ((\vartheta^{-1}\phi \vartheta) \vartheta^{-1}  P)\circ \vartheta^t\big)\\
&=& \tfrac12 \big( \oZ( P) \circ \phi^t + \oZ(\vartheta^{-1}  P)\circ \vartheta^t \phi^t\big)\\
&=&  \oZp(P)\circ \phi^t.
\end{eqnarray*}
If $\phi\in\glz{2}$ with $\det\phi =-1$ and $P\in\lp{2}$, then 
\begin{eqnarray*}
\oZp(\phi P)&=& \tfrac12 \big( \oZ(\phi\vartheta \vartheta^{-1} P) + \oZ(\vartheta^{-1}  \phi P)\circ \vartheta^t\big)\\
&=& \tfrac12 \big( \oZ(\vartheta^{-1} P) \circ\vartheta^t \phi^t + \oZ(P)\circ \phi^t\big)\\
&=& \oZp(P)\circ \phi^t.
\end{eqnarray*}
The proof for $\oZm$ is similar. Moreover, note that $\oZ= \oZp+\oZm$.

\goodbreak
Let $r=2s+1$ for $s\in\N$. We set  $a_{r_1} =\oZp(T_2)(e_1[r_1],e_2[r_2])$ for $0\le r_1\le r$ and $r_1+r_2=r$. Then
\[
\oZp(T_2)(e_1[r_1],e_2[r_2])=\oZp(\vartheta T_2)(e_1[r_1],e_2[r_2])=\oZp(T_2)(e_1[r_2],e_2[r_1])
\]
or $a_{r_1}=a_{r-r_1}$. If we set  $b_{r_1} =\oZm(T_2)(e_1[r_1],e_2[r_2])$ for $0\le r_1\le r$ and $r_1+r_2=r$, then
\[
\oZm(T_2)(e_1[r_1],e_2[r_2])=\oZm(\vartheta T_2)(e_1[r_1],e_2[r_2])=-\oZm(T_2)(e_1[r_2],e_2[r_1])
\]
or $b_{r_1}=-b_{r-r_1}$. 
Thus, in each case,  we have to determine only $s+1$ coordinates of $\oZpm(T_2)$.

Let $\phi\in\slnz$ be the map sending $e_1$ to $-e_2$ and $e_2$ to $e_1-e_2$.  We have $T_2-e_2=\phi T_2$. For $0\leq r_1\leq r$, the translation invariance of $\oZpm$ implies
\begin{align*}
\oZpm(T_2)(e_1[r_1],e_2[r_2])&=\oZpm(\phi T_2)(e_1[r_1],e_2[r_2])\\[4pt]
&=\oZpm(T_2)(e_2[r_1],-e_1-e_2[r_2])\\
&=(-1)^{r_2} \sum_{i=0}^{r_2} \binom{r_2}{i}\oZpm(T_2)(e_2[r-i],e_1[i]).
\end{align*}

First, we look at $\oZm(T_2)$. Note that $r_1=0$ gives us $b_0=-b_r$ and that we have a system of $r+1$ equations involving $b_0,\dots,b_{r}$. That is, for $r_1$ odd, we have
\begin{equation}\label{Zp1}
b_{0}+\binom{r_1}{1}b_1+\dots+\binom{r_1}{r_1-1}b_{r_1-1}+2b_{r_1}=0
\end{equation}
and, for $r_1>0$ even, 
\begin{equation}\label{Zp2}
b_{0}+\binom{r_1}{1}b_1+\dots+\binom{r_1}{r_1-1}b_{r_1-1}=0.
\end{equation}
It is easily checked that, for $1<r<8$ odd, this system of equations combined with $b_{r_1}=-b_{r-r_1}$ has rank $r+1$. Hence $\oZm(T_2)$ vanishes and we have $\oZ(T_2)=\oZp(T_2)$. Yet, equations (\ref{Zp1}) and (\ref{Zp2}) remain the same for $\oZp(T_2)$ with the replacement of each $b_i$ by $a_i$.  It is easy to see that for $1<r<8$ odd, this system of equations combined with $a_{r_1}=a_{r-r_1}$  has rank $r$. As the tensor $\oL_1^r(T_2)$ is non-trivial by Lemma \ref{nontriv},  any solution is a multiple of $\oL_1^r(T_2)$ concluding the proof.
\end{proof}

We remark that the above lemma fails to hold for $r>8$ odd. In particular, the system of equations that determine the $(r+1)$ coordinates $\oZ(T_2)(e_1[r_1],e_2[r_2])$ with $r_1+r_2=r$ has rank $(r-1)$ for $r=9, 11, 13$ and rank $(r-2)$ for $r=15, 17 ,19$; there exist new tensor valuations in these cases.  For $r=9$, we describe the construction of this new valuation  in the following section.

\begin{proposition}\label{tensor_simple_planar}
Let $2\le r\le 8$.
If $\,\oZ:\lp{2}\rightarrow\T^r$ is a simple, $\slz{2}$ equivariant, and translation invariant valuation, then $\oZ=0$ for $r$ even and  there is $c\in\R$ such that $\oZ=c \oL_1^r$ for $r$ odd.
\end{proposition}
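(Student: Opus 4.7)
The plan is to reduce everything to the Betke--Kneser-style uniqueness from Corollary \ref{BetkeKneserinvariance}: since $\oZ$ and the conjectured target valuation ($0$ for $r$ even, $c\,\oL_1^r$ for $r$ odd) are both $\slz{2}$ equivariant and translation invariant on $\lp{2}$, agreement on the standard simplices $T_0, T_1, T_2$ will force agreement on all of $\lp{2}$. Simplicity of $\oZ$ instantly gives $\oZ(T_0) = \oZ(T_1) = 0$, and the target vanishes there as well: trivially in the even case, and in the odd case because Lemma \ref{nontriv} says $\oL_1^r$ is simple on $\lp{2}$ when $r \ge 3$ is odd. The entire problem therefore collapses to identifying $\oZ(T_2)$.

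For that single value I would apply Lemma \ref{even_tensor_simple_planar} when $r$ is even and Lemma \ref{odd_tensor_simple_planar} when $r$ is odd (where the admissible odd values are $3, 5, 7$). Each lemma delivers the desired form of $\oZ(T_2)$ under the single hypothesis $\oZ([0,1]^2) = 0$, so the only real work left is to prove this vanishing identity on the unit square.

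The main step is to show that $\oZ$ is secretly $1$-homogeneous. I would invoke the homogeneous decomposition of Theorem \ref{Polynomial} (with $d = 0$, since $\oZ$ is translation invariant) to write $\oZ = \oZ_0 + \oZ_1 + \oZ_2$ on $\lp{2}$, with each $\oZ_i$ being $i$-homogeneous. Every summand inherits translation invariance, $\slz{2}$ equivariance and simplicity; simplicity is inherited because $\oZ(kP) = 0$ for all $k \in \N$ whenever $\dim P < 2$, forcing each coefficient $\oZ_i(P)$ to vanish. Now $\oZ_0(P) = \oZ(0 \cdot P) = \oZ(\{0\}) = 0$ by simplicity, and $\oZ_2 = 0$ by Proposition \ref{trans_van} (the hypotheses $n=2$ and $r \ge 1$ are satisfied). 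Hence $\oZ = \oZ_1$ is $1$-homogeneous, so by Proposition \ref{linear} it is Minkowski additive. Writing $[0,1]^2 = [0,1]e_1 + [0,1]e_2$ and applying simplicity to each segment then yields $\oZ([0,1]^2) = \oZ([0,1]e_1) + \oZ([0,1]e_2) = 0$.

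The main obstacle I anticipate is the conceptual one in the third paragraph: recognising that, although $\oZ$ is not a priori homogeneous, the interaction of simplicity with Proposition \ref{trans_van} forces it to be purely $1$-homogeneous, which is what unlocks Minkowski additivity and the vanishing on the unit square. Once $\oZ([0,1]^2) = 0$ is established, the remainder is bookkeeping: feed it into Lemmas \ref{even_tensor_simple_planar} and \ref{odd_tensor_simple_planar} for the two parities, and close the argument with Corollary \ref{BetkeKneserinvariance}.
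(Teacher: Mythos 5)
Your proof is correct and mirrors the paper's argument almost exactly: homogeneous decomposition via Theorem~\ref{Polynomial}, elimination of the degree-$0$ and degree-$2$ pieces (the latter by Proposition~\ref{trans_van}), vanishing of the degree-$1$ piece on $[0,1]^2$, then Lemmas~\ref{even_tensor_simple_planar} and~\ref{odd_tensor_simple_planar} followed by Corollary~\ref{BetkeKneserinvariance}. The single point of divergence is cosmetic: you obtain $\oZ([0,1]^2)=0$ by invoking Minkowski additivity (Proposition~\ref{linear}) and decomposing $[0,1]^2$ as a Minkowski sum of segments, whereas the paper quotes Lemma~\ref{zon} directly---two packagings of the same underlying fact.
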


\begin{proof}
We only need to consider the statement for $\oZ$ being in addition $i$-homogeneous by Theorem~\ref{Polynomial}. 
If $i=2$, then $\oZ$ is trivial due to Proposition~\ref{trans_van}. If $i=1$, then Lemma \ref{zon} implies that $\oZ$ vanishes on $\cZ_2(\Z^n)$ which gives $\oZ([0,1]^2)=0$. By Lemma \ref{even_tensor_simple_planar}, we have $\oZ(T_2)=0$ for $r$ even. By Lemma \ref{odd_tensor_simple_planar}, there is $c\in\R$ such that $\oZ(T_2)=c\oL_1^r$ for $r$ odd. 
Since $\oZ$ is simple, Corollary \ref{BetkeKneserinvariance} implies in both cases the result.
\end{proof}

\goodbreak
\subsection{A new tensor valuation on $\lp{2}$}\label{new}

We now define a new simple, 1-homogeneous, $\slz{2}$ equivariant, and translation invariant tensor valuation $\oN: \lp{2} \to \T^9$. The basic step is to set $\oN(T_2)= \oL_1^3(T_2)^3$; that is, to use the threefold symmetric tensor product of $\oL^3_1(T_2)$. Note that $\oL_1^3: \lp{2}\to \T^3$ is simple, $\slz{2}$ equivariant, translation invariant, and, by Lemma \ref{nontriv}, non-trivial. So, for $\phi\in\slz{2}$, we have
\begin{eqnarray}\label{equivariance}
\oL_1^3(\phi T_2)^3&=&\oL_1^3(\phi T_2)\odot \oL_1^3(\phi T_2)\odot\oL_1^3(\phi T_2)\nonumber\\
&=&\oL_1^3(T_2) \circ \phi^t \odot \oL_1^3(T_2)\circ \phi^t\odot \oL_1^3(T_2)\circ \phi^t\\
&=&\oL_1^3(T_2)^3\circ \phi^t.\nonumber
\end{eqnarray}
Also note that $\oL_1^3([0,1]^2)=0$ by Lemma \ref{zon}.

More precisely, we set $\oN(P)=0$ for $P\in\lp{2}$ with $\dim(P)\le 1$ and for a two-dimensional lattice polygon $P$ we choose a dissection into translates of triangles $\phi_i T_2$ with $\phi_i\in\slz{2}$ for $i=1, \dots, m$. 
Here $P$ is said to be {\em dissected} into the triangles $S_1, \dots, S_m$ if $P=S_1\cup \dots \cup S_m$ where $S_i$ and $S_j$ have disjoint interiors for every $i\ne j$; this is written as $P= S_1\sqcup\dots\sqcup S_m$.
By Theorem~\ref{McMext}, a simple valuation $\oN$ then has the property that $\oN(S_1\sqcup\dots\sqcup S_m)=\oN(S_1)+\dots+\oN(S_m)$.
We set
\begin{equation}\label{newval}
\oN(P) = \sum_{i=1}^m \oL_1^3(\phi_i T_2)^3.
\end{equation}
Note that (\ref{equivariance}) implies that $\oN$ is $\slz{2}$ equivariant. 

We need to show that $\oN$ is well-defined. To do this, we use the following definition and  theorem.
Every lattice polygon has a unimodular triangulation; that is, a dissection into unimodular triangles (see, e.g.,~\cite{DeLoeraRambauSantos}). If the union of two unimodular triangles in such a triangulation is a convex quadrilateral $Q$, then replacing the diagonal of $Q$ given by the edges of the adjacent triangles with the opposite diagonal produces a new unimodular triangulation. This process is called a {\em flip}. 

\begin{theorem}[\!\! Lawson \cite{Lawson}]\label{flipflop}
Given any two unimodular triangulations $\cT$ and $\cT'$ of a lattice polygon $P\in\lp2$, there exists a finite sequence of flips transforming $\cT$ into $\cT'$.
\end{theorem}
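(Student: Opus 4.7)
My plan is to reduce the statement to the classical fact that the flip graph on all triangulations of a finite planar point set is connected. First, I would observe that every unimodular triangulation of $P$ has the same vertex set, namely $P \cap \Z^2$: a unimodular triangle has area $\tfrac12$, so by Pick's theorem it contains no lattice point other than its three vertices, and therefore every lattice point of $P$ must appear as a vertex of $\cT$. Conversely, any triangulation of $P$ whose vertex set is all of $P \cap \Z^2$ consists of lattice triangles of area $\tfrac12$ each and is thus automatically unimodular. Hence the unimodular triangulations of $P$ are precisely the triangulations of the point set $P\cap\Z^2$ in which every point is used.

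Next, I would verify that a flip preserves unimodularity. If two unimodular triangles share an edge and their union is a convex quadrilateral $Q$, then $Q$ is a lattice quadrilateral of area $1$, and the opposite diagonal partitions $Q$ into two lattice triangles of positive area summing to $1$. Since every lattice triangle has area in $\tfrac12\N$, both pieces must have area exactly $\tfrac12$ and are therefore unimodular. Consequently, a flip between unimodular triangulations coincides with a flip of the underlying triangulation of $P \cap \Z^2$, and the flip graph on unimodular triangulations of $P$ is the full flip graph on triangulations of the point set $P\cap\Z^2$.

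With these reductions in place, the theorem follows from connectedness of that flip graph. I would establish this by the standard Delaunay monovariant: after a generic perturbation, the Delaunay triangulation is the unique minimizer of a convex potential (for instance the lower envelope of the lifting to the paraboloid $z=|x|^2$), and any non-Delaunay edge can be flipped so as to strictly decrease this potential. Since there are only finitely many triangulations, iterated flipping carries both $\cT$ and $\cT'$ to the same perturbed Delaunay triangulation, and hence to each other. The main obstacle is that $P\cap\Z^2$ is typically riddled with cocircular quadruples, so the Delaunay triangulation is not unique and the monovariant requires a tie-breaking rule; I would handle this by a lexicographic perturbation of the lattice points and verify that the flip sequence produced for the perturbed configuration remains a valid flip sequence for $P\cap\Z^2$ itself.
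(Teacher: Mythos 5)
This statement is stated in the paper as a citation to Lawson's 1972 paper and is not proved there, so there is no internal argument to compare yours against. That said, your sketch is a correct and standard route to the result, and the one point you flag as needing care does work out. The reduction is clean: by Pick's theorem a lattice triangle is unimodular iff it contains no lattice points besides its three vertices iff it has area $\tfrac12$, so unimodular triangulations of $P$ are exactly the triangulations of the point configuration $P\cap\Z^2$ using all of its points, and your area argument shows a flip of unimodular triangles again yields unimodular triangles. The theorem is then flip-connectivity of triangulations of a planar point configuration, which follows from the Lawson/Delaunay flip argument. On the worry you raise about whether the flip sequence for a perturbed configuration is valid for the actual lattice points: suppose the quadrilateral $acbd$ around a shared edge $ab$ is degenerate in the original configuration, say $a$ lies on segment $cd$. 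Then $a$ and $c$ both lie on the circumcircle of $abc$, and $d$ lies on the ray from $c$ through $a$ beyond $a$, hence strictly outside that circle, so $ab$ is strictly locally Delaunay and a sufficiently small perturbation cannot make it a flip candidate. Together with the standard lemma that a strictly non-locally-Delaunay edge always bounds a strictly convex quadrilateral, this shows every flip prescribed for the perturbed configuration is geometrically legal for $P\cap\Z^2$, so your lexicographic (or lifting-height) tie-break closes the argument. A slightly cleaner alternative, which avoids moving points at all, is to perturb only the lifting heights $|p|^2\mapsto |p|^2+\varepsilon_p$: the set of geometrically admissible flips then provably does not change, while cocircularities are broken.
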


\goodbreak
We now show that the definition (\ref{newval}) does not depend on the choice of the triangulation. 

\begin{lemma}\label{flip}
Let $S_1, \dots, S_m$ and $S_1',\dots, S_m'$ be unimodular triangles.
If $$S_1\sqcup \dots \sqcup  S_m=S_1' \sqcup \dots\sqcup S_m'\in \lpn,$$ 
then
$$\sum_{i=1}^m\oN(S_i)=\sum_{i=1}^m  \oN(S_i').$$
\end{lemma}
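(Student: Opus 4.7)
The plan is to reduce, via Lawson's theorem, to checking invariance of the sum under a single flip, and then to leverage the valuation property of $\oL_1^3$ together with the oddness of the cube in the symmetric tensor product. By Theorem~\ref{flipflop}, any two unimodular triangulations of $P$ are connected by a finite sequence of flips, and each flip modifies the triangulation only inside a single convex lattice quadrilateral $Q\subset P$ whose two diagonals both yield unimodular triangulations of $Q$. So it suffices to prove
\[
\oN(S_1)+\oN(S_2)=\oN(S_1')+\oN(S_2')
\]
whenever the two pairs of unimodular triangles $\{S_1,S_2\}$ and $\{S_1',S_2'\}$ triangulate the same convex lattice quadrilateral $Q$ along its two diagonals.

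My first real step would be to classify such a $Q$ up to $\slz{2}$-equivalence and integer translation. Placing one vertex at the origin and labeling the remaining vertices $v_2,v_3,v_4$ counterclockwise, convexity together with the unimodularity of the four triangles involved forces $\det(v_2,v_3)=\det(v_3,v_4)=\det(v_2,v_4)=1$, and the extra condition coming from $\{v_2,v_3,v_4\}$ then forces $v_4=v_3-v_2$. Hence $Q$ is a unit-area parallelogram, and mapping the oriented basis $(v_2,v_3)$ to $(e_1,e_1+e_2)$ by an $\slz{2}$ transformation reduces the problem to $Q=[0,1]^2$ with two canonical dissections: $\{\phi_1 T_2,\phi_2 T_2\}$ along one diagonal and $\{T_2,(e_1+e_2)-T_2\}$ along the other, for suitable shears $\phi_1,\phi_2\in\slz{2}$.

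For this canonical case, I would apply the valuation identity to $\oL_1^3$ (not to $\oN$) on each dissection. By Lemma~\ref{nontriv}, $\oL_1^3$ is simple on $\lp{2}$, so the two diagonals contribute zero. Setting $A=\oL_1^3(T_2)$, the $\slz{2}$ equivariance gives $\oL_1^3(\phi_i T_2)=A\circ\phi_i^t$, and since $-I\in\slz{2}$ and $r=3$ is odd, $\oL_1^3((e_1+e_2)-T_2)=-A$. Comparing the two resulting valuation expressions for $\oL_1^3([0,1]^2)$ yields the key identity $A\circ\phi_2^t=-(A\circ\phi_1^t)$. Then, using that the symmetric cube is an odd power, so that $(-B)^3=-B^3$, both
\[
\oN(\phi_1 T_2)+\oN(\phi_2 T_2)=(A\circ\phi_1^t)^3+(A\circ\phi_2^t)^3 \quad\text{and}\quad \oN(T_2)+\oN((e_1+e_2)-T_2)=A^3+(-A)^3
\]
collapse to $0$, proving the desired flip invariance.

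The main obstacle is the classification step: a careful lattice-geometric bookkeeping argument is needed to show that every flip quadrilateral in a unimodular triangulation is $\slz{2}$-equivalent, after translation, to $[0,1]^2$. Once this is secured, the conclusion follows essentially for free from the valuation property of $\oL_1^3$, its simplicity established in Lemma~\ref{nontriv}, and the fact that the cube is an odd power in the symmetric tensor product.
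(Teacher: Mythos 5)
Your proposal is correct and follows essentially the same route as the paper: reduce to a single flip via Lawson's theorem, use that $\oL_1^3$ is simple and vanishes on the flip quadrilateral (a unimodular parallelogram) to deduce that the two triangles of each diagonal have opposite $\oL_1^3$-values, and then exploit the oddness of the symmetric cube to make each side vanish. The one point you develop in more detail than the paper is the explicit classification showing that the flip quadrilateral is a translate of an $\slz{2}$-image of $[0,1]^2$, which the paper simply asserts; that classification is a standard fact and your sketch of it is sound.
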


\begin{proof}  By Theorem \ref{flipflop}, there is a sequence of flips that transforms any triangulation of a given polygon $P$ to any other triangulation of P. Therefore, it suffices to check that the value of $\oN$ is not changed by any flip, as $\oN$ vanishes on lower dimensional polygons. So if $S_i\sqcup S_j= S_k' \sqcup S_l'$  and $S_i\sqcup S_j$ is a translate of an $\slz2$  image of $[0,1]^2$, we have to show that
$$\oN(S_i)+ \oN(S_j) = \oN(S_k')+ \oN(S_l').$$
This is easily seen. Indeed, since $S_i\sqcup S_j$ is a translate of an $\slz2$ image of  $[0,1]^2$, we have 
$$\oN(S_i)+\oN(S_j)=\oN(S_i\sqcup S_j)=0,$$
by the $\slz{2}$ equivariance as $\oL_1^3([0,1]^2)=0$ and the same holds for $S_k', S_l'$.
\end{proof}

\goodbreak
Lemma \ref{flip} also shows that $\oN$ is a valuation. Indeed, if $P,Q \in \lp{2}$ are such that $P\cup Q\in\lp{2}$ and $\cT$ is a triangulation of $P\cup Q$, then we perform a sequence of flips on $\cT$ until the subset of the triangulation of $P\cup Q$ that minimally covers $P\cap Q$ is fully contained in $P\cap Q$. Now, the valuation property of $\oN$ follows immediately from the definition. Thus, we have shown that  $\oN: \lp{2}\to \T^9$ is a simple, 1-homogeneous,  $\slz{2}$ equivariant, and  translation invariant valuation.
Elementary calculations show that $\oL_1^3(T_2)^3\in\T^9$ is non-trivial and not a multiple of $\oL_1^9(T_2)$.

We remark that for $r>9$ odd, we can define new valuations in a similar way using symmetric tensors products of $\oL_1^{s_j}(T_2)$ for $j=1, \dots, m$ with $s_j>1$ odd and $s_1+ \dots+s_m=r$. In general, there are linear dependencies among these new valuations.

\goodbreak
\subsection{Simple tensor valuations on $\lpn$}

Let $n\ge 3$. 
For the classification of simple tensor valuations, we use the following dissection of the 
 the 2-cylinder $T_{n-1}+[0,e_n]$  into $n$ simplices $S_1,\dots,S_n$. Let $e_0=0$. We set $S_1=T_n$ and
\begin{equation}
\label{dissectT}
S_i=[e_0+e_n,\dots,e_{i-1}+e_n,e_{i-1},\dots,e_{n-1}] \mbox{ \ for $i=2,\dots,n$}.
\end{equation}
Note that each $S_i$ is $n$-dimensional and unimodular (see, for example,  \cite[Section~2.1]{Hat02}). 

\goodbreak
Let $\oZ:\lpn\to \T^r$ be a simple, $\slnz$ equivariant, and translation invariant valuation.
Applying the dissection~(\ref{dissectT}), we make use of the translation invariance of $\oZ$ and consider $\tilde{S}_i=S_i-e_n$ for all $i>1$. Define $\phi_i\in\slnz$, for $i\in\{2,\dots,n\}$, by $\phi_i e_j=e_j$ for $j<i-1$, $\phi_i e_k=e_k-e_n$ for $i-1\leq k\leq n-1$, and $\phi_i e_n=e_{i-1}$. Let $\phi_1$ be the identity matrix. Then $\tilde{S}_i=\phi_i T_n$ for all $i\ge1$ and
\begin{equation}\label{def_diss}
\begin{array}{rl}
\!\!\!\!\oZ(T_{n-1}+[0,e_n])\!\!\!\!&=\oZ(\phi_1 T_n)(e_1[r_1],\dots, e_n[r_n])+\dots+\oZ(\phi_nT_n)(e_1[r_1],\dots, e_n[r_n])\\[6pt]
&=\oZ(T_n)(\phi_1^te_1[r_1],\dots, \phi_1^te_n[r_n])+\dots+\oZ(T_n)(\phi_n^t e_1[r_1],\dots, \phi_n^t e_n[r_n])
\end{array}
\end{equation}
for any $r_1, \dots, r_n\in\{0,\dots,r\}$ with $r_1+\dots+ r_n=r$. For $\oZ(T_{n-1}+[0,e_n])=0$, 
this is a system of linear and homogeneous equations for the $\binom{n+r-1}{r}$ coordinates of the tensor $\oZ(T_n)$. 
In addition, if $\psi\in\slnz$ is an even permutation of $e_1, \ldots, e_n$, then $\psi T_n= T_n$ and we can also make use of these symmetries.
We checked directly that the corresponding matrix has full rank and that, therefore, all coordinates vanish by using a computer algebra system (namely, SageMath~\cite{Sage}) in the following cases. 

\begin{lemma}\label{tensor_simple_nge3}
Let $\oZ:\lpn\rightarrow\T^r$ be a simple, $\slnz$ equivariant, and translation invariant valuation such that $\,\oZ(T_{n-1} +[0,e_n])=0$. If $\,3\le n<r \le 8$, then  $\oZ(T_n)=0$.
\end{lemma}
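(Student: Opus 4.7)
The plan is to turn the hypothesis $\oZ(T_{n-1}+[0,e_n])=0$ into a homogeneous linear system in the coordinates of the symmetric tensor $\oZ(T_n)\in\T^r$ and to verify, case by case for $3\le n<r\le 8$, that this system has trivial kernel.

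The setup is exactly what formula (\ref{def_diss}) gives. A symmetric tensor $A\in\T^r$ is determined by its coordinates $A(e_1[r_1],\dots,e_n[r_n])$ indexed by tuples $(r_1,\dots,r_n)\in\N^n$ with $r_1+\cdots+r_n=r$, so there are $N:=\binom{n+r-1}{r}$ unknowns. For each such tuple, expanding each $\phi_i^t e_j$ as an integer combination of the basis vectors and using multilinearity rewrites the right-hand side of (\ref{def_diss}) as an explicit integer linear form in the $N$ unknowns $\oZ(T_n)(e_1[r_1'],\dots,e_n[r_n'])$. Running over all tuples $(r_1,\dots,r_n)$ yields an $N\times N$ system $M\,v=0$, where $v$ is the coordinate vector of $\oZ(T_n)$.

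Next, I would exploit additional symmetries of $T_n$. Any even permutation $\psi$ of $\{e_1,\dots,e_n\}$ lies in $\slnz$ and satisfies $\psi T_n=T_n$, so $\slnz$ equivariance forces
\[
\oZ(T_n)(e_1[r_1],\dots,e_n[r_n])=\oZ(T_n)(e_{\psi(1)}[r_1],\dots,e_{\psi(n)}[r_n])
\]
for every even $\psi\in S_n$. Appending these relations to the rows of $M$ gives a larger rectangular system whose kernel still contains $v$; the claim $\oZ(T_n)=0$ is then equivalent to this augmented integer matrix having rank $N$.

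Since everything in sight is over $\Q$ and the matrix entries are explicit integers depending only on $(n,r)$, the rank can be checked by symbolic computation. The finitely many pairs $(n,r)$ with $3\le n<r\le 8$ are precisely the cases to be verified, and the paper's reference to SageMath indicates that this is how the verification is to be carried out. The principal obstacle is not conceptual but purely computational: writing down (\ref{def_diss}) and the permutation symmetries for each $(n,r)$, assembling the matrix, and certifying full rank --- a task that grows rapidly in size (for $n=7$, $r=8$ one already has $N=\binom{14}{8}=3003$ unknowns), but remains tractable. Once the matrix has full rank in each of the listed cases, $v=0$ follows and the lemma is proved.
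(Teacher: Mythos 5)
Your proposal follows the paper's argument precisely: set up the homogeneous linear system from the dissection identity~(\ref{def_diss}), augment it with the relations coming from even permutations of $e_1,\dots,e_n$ fixing $T_n$, and certify by symbolic computation (SageMath) that the resulting integer matrix has full rank $\binom{n+r-1}{r}$ for each pair $(n,r)$ with $3\le n<r\le 8$. This is exactly how the paper establishes the lemma, so there is nothing to add.
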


\noindent  For $n=3$, we also require the following variants of the above lemma. The calculations were again performed with a computer algebra system.

\goodbreak
\begin{lemma}\label{tensor_simple_3o}
Let $\oZ:\lp3\rightarrow\T^r$ be a simple, $\slz3\!$ equivariant, and translation invariant valuation. If $$\oZ(T_{2} +[0,e_3])(e_1[r_1], e_2[r_2], e_3[r_3])=0$$ for $r_3$ odd and $r\in\{3,5,7\}$, then $\oZ(T_3)=0$. 
\end{lemma}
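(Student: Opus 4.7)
The plan is to imitate the proof of Lemma~\ref{tensor_simple_nge3}, but adapted to the weaker hypothesis that the vanishing of $\oZ(T_2+[0,e_3])$ is assumed only on coordinates with odd $r_3$. First I would apply the dissection~(\ref{dissectT}) for $n=3$, writing $T_2+[0,e_3]=S_1\sqcup S_2\sqcup S_3$, and use the translation invariance of $\oZ$ to replace each $S_i$ with $\tilde S_i=\phi_i T_3$, where the $\phi_i\in\slz{3}$ are the maps introduced before (\ref{def_diss}). The $\slz{3}$ equivariance of $\oZ$ then turns each $\oZ(\tilde S_i)$ into $\oZ(T_3)\circ \phi_i^t$, so that evaluating at $(e_1[r_1],e_2[r_2],e_3[r_3])$ with $r_1+r_2+r_3=r$ and $r_3$ odd yields one linear homogeneous equation in the $\binom{r+2}{2}$ coordinates of $\oZ(T_3)$.

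Next I would exploit the symmetries of $T_3$: the even permutations of $(e_1,e_2,e_3)$ lie in $\slz{3}$ and fix $T_3$, giving the identifications
\[
\oZ(T_3)(e_1[r_1],e_2[r_2],e_3[r_3])=\oZ(T_3)(e_2[r_1],e_3[r_2],e_1[r_3])=\oZ(T_3)(e_3[r_1],e_1[r_2],e_2[r_3]),
\]
which collapse the number of unknowns by roughly a factor of three.

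Finally, for each $r\in\{3,5,7\}$ separately I would assemble the resulting homogeneous linear system and verify, with a computer algebra system as in Lemma~\ref{tensor_simple_nge3}, that the coefficient matrix, viewed modulo the cyclic symmetry above, has full rank; this forces every coordinate of $\oZ(T_3)$ to vanish. The main obstacle is that restricting to odd $r_3$ discards roughly half of the equations that were available in Lemma~\ref{tensor_simple_nge3}, and for $r=3$ we are moreover in the borderline regime $r=n$ rather than $n<r$, so the rank check is considerably tighter. It is not a priori clear that the surviving odd-$r_3$ equations together with the cyclic symmetry still suffice, and this is precisely where the case-by-case symbolic rank verification becomes essential.
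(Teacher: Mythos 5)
Your proposal follows exactly the paper's approach: the paper proves this lemma (and Lemma~\ref{tensor_simple_3e}) by setting up the same linear system from the dissection~(\ref{dissectT}), restricting to the equations with $r_3$ odd, adjoining the identifications coming from the even permutations of $e_1,e_2,e_3$ that fix $T_3$, and verifying full rank with SageMath, case by case for $r\in\{3,5,7\}$. Your observation that $r=3$ is the borderline case $r=n$ (not covered by Lemma~\ref{rlen} since the hypothesis is weaker) and that roughly half the equations are discarded is a fair description of why the rank check must be done symbolically rather than deduced from the stronger lemma.
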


\begin{lemma}\label{tensor_simple_3e}
Let $\oZ:\lp3\rightarrow\T^r$ be a simple, $\slz3\!$ equivariant, and translation invariant valuation. If $$\oZ(T_{2} +[0,e_3])(e_1[r_1], e_2[r_2], e_3[r_3])=0$$ for $r_3$ even and $r\in\{2,4,6,8\}$, then $\oZ(T_3)=0$.
\end{lemma}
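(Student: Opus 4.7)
The plan is to follow the same strategy used for Lemma~\ref{tensor_simple_nge3} and Lemma~\ref{tensor_simple_3o}, specialized to $n=3$ with the weaker hypothesis that only the coordinates of $\oZ(T_2+[0,e_3])$ with $r_3$ even are assumed to vanish. First, since $\oZ$ is simple, $\oZ(T_0)=\oZ(T_1)=\oZ(T_2)=0$, so by Corollary~\ref{BetkeKneserinvariance} it suffices to establish $\oZ(T_3)=0$. The unknowns are therefore the $\binom{r+2}{2}$ coordinates $\oZ(T_3)(e_1[r_1],e_2[r_2],e_3[r_3])$ with $r_1+r_2+r_3=r$.

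Next, I would write down the dissection equation~(\ref{def_diss}) for $n=3$, using the translates $\widetilde S_i = \phi_i T_3$ of the three simplices in the dissection of the 2-cylinder $T_2+[0,e_3]$, with the explicit $\phi_i\in\slz{3}$ defined just before~(\ref{def_diss}). For every triple $(r_1,r_2,r_3)$ with $r_1+r_2+r_3=r$ and $r_3$ even, the hypothesis together with the $\slz3$ equivariance and translation invariance of $\oZ$ gives one linear equation in the unknown coordinates of $\oZ(T_3)$, obtained by expanding each term $\oZ(T_3)(\phi_i^t e_1[r_1],\phi_i^t e_2[r_2],\phi_i^t e_3[r_3])$ in the standard basis.

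In addition, I would append the symmetry equations coming from even permutations of $\{e_1,e_2,e_3\}$: these permutations lie in $\slz3$ and fix $T_3$, so each coordinate of $\oZ(T_3)$ is invariant under cyclic permutation of its multi-index $(r_1,r_2,r_3)$. Together with the dissection equations, this produces a homogeneous linear system whose coefficient matrix depends only on $r$.

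The main obstacle is that we are now using \emph{only} the equations coming from coordinates with $r_3$ even, roughly half as many as in Lemma~\ref{tensor_simple_nge3}, so it is no longer visually obvious that enough independent equations remain after accounting for the cyclic symmetry. I would therefore verify, case by case for $r\in\{2,4,6,8\}$, that the assembled system has full rank and hence forces $\oZ(T_3)=0$; as in the proofs of Lemmas~\ref{tensor_simple_nge3} and~\ref{tensor_simple_3o}, this rank verification is carried out symbolically in a computer algebra system such as SageMath~\cite{Sage}.
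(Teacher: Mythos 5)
Your proposal follows exactly the strategy the paper uses for this family of lemmas: form the linear system from the dissection identity~(\ref{def_diss}) with $n=3$, restricted to the hypotheses with $r_3$ even, augment it with the cyclic-permutation symmetries of $T_3$, and verify full rank by computer algebra (the paper defers the actual computation to SageMath and to~\cite{Laura}). The only minor slip is the appeal to Corollary~\ref{BetkeKneserinvariance}: it is unnecessary here since the lemma's conclusion is only $\oZ(T_3)=0$, not $\oZ\equiv 0$, but this does not affect the argument.
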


\noindent
For more information, see \cite{Laura}. 

\goodbreak
The dissection (\ref{dissectT}) is also used in the proof of the following result.

\begin{lemma}\label{rlen}
Let $\oZ:\lpn\rightarrow\T^r$ be a simple, $\slnz$ equivariant, and translation invariant valuation such that $\,\oZ(T_{n-1} +[0,e_n])=0$.
If $n\ge 2$ and $n\geq r$, then $\oZ(T_n)=0$.
\end{lemma}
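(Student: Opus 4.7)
The goal is $A:=\oZ(T_n)=0$ in $\T^r$, under the hypotheses of the lemma with $n\ge\max(2,r)$. The two main ingredients are: (i) the dissection equation~(\ref{def_diss}), which via the hypothesis $\oZ(T_{n-1}+[0,e_n])=0$ gives, for every multi-index $(s_1,\dots,s_n)$ with $\sum s_k=r$, the linear relation
\[
\sum_{i=1}^n A\bigl(\phi_i^t e_1[s_1],\dots,\phi_i^t e_n[s_n]\bigr)=0,
\]
and (ii) the $\slnz$-symmetries of $T_n$, whose affine stabilizer is the alternating group $A_{n+1}$ acting on the vertex set $\{0,e_1,\dots,e_n\}$.

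These symmetries come in two flavors. First, every even permutation of the basis $\{e_1,\dots,e_n\}$ lies in $\slnz$ and fixes $T_n$, so $A$ is $A_n$-invariant in its coordinates; under $n\ge r$ this collapses the unknowns to a bounded number indexed essentially by partitions of $r$. Second, vertex 3-cycles such as $(0,e_1,e_2)$ are realized by affine maps $x\mapsto\psi x+e_1$ whose linear part $\psi\in\slnz$ satisfies $\det\psi=1$, $\psi^t e_1=-(e_1+\cdots+e_n)$, $\psi^t e_2=e_1$, and $\psi^t e_j=e_j$ for $j\ge 3$ (an elementary computation). By translation invariance, $A=A\circ\psi^t$, yielding further key relations such as
\[
A(e_1[r])=(-1)^r A\bigl((e_1+\cdots+e_n)[r]\bigr).
\]

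The base case $r=1$ is then especially clean: $A$ has the form $\alpha(e_1+\cdots+e_n)$ (for $n\ge 3$ by $A_n$-transitivity, and for $n=2$ by solving the $2\times 2$ dissection system from~(\ref{def_diss}) directly), and the 3-cycle relation forces $\alpha=-n\alpha$, hence $\alpha=0$. For the inductive step $r\ge 2$ I would run a secondary induction on the support size of $(s_1,\dots,s_n)$: single-support equations combined with the 3-cycle relation kill coordinates of partition type $(r)$; two-support equations subsequently handle type $(r-1,1)$; and so on, building up to $r$-support. The main obstacle is verifying nondegeneracy of the linear system at each step of this secondary induction; the assumption $n\ge r$ is used precisely to guarantee that every partition of $r$ can be realized by a multi-index whose $A_n$-orbit meets the slots involved in the equations---the very property that fails in the complementary range $n<r$ and necessitates the computer-assisted Lemmas~\ref{tensor_simple_nge3}--\ref{tensor_simple_3e}.
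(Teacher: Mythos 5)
Your setup is sound: the dissection relation obtained from $\oZ(T_{n-1}+[0,e_n])=0$, the $A_n$-invariance of $\oZ(T_n)$ coming from even coordinate permutations fixing $T_n$, and the affine 3-cycle $x\mapsto\psi x+e_1$ with $\psi^t e_1=-(e_1+\cdots+e_n)$, $\psi^t e_2=e_1$, $\psi^t e_j=e_j$ ($j\ge 3$) are all correct, as is the resulting relation $A(e_1[r])=(-1)^r A\bigl((e_1+\cdots+e_n)[r]\bigr)$ and the $r=1$ base case. But the inductive skeleton for $r\ge 2$ has a genuine gap: the direction of your secondary induction is wrong. Write $a_{r_1,\dots,r_m}$ for the coordinate of $\oZ(T_n)$ indexed by the partition $(r_1\ge\cdots\ge r_m>0)$ of $r$. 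The dissection relation (\ref{dissection_n-1}), evaluated on a multi-index supported on $e_1,\dots,e_{n-1}$ realizing a partition with $m<n$ parts, expresses $n\cdot a_{r_1,\dots,r_m}$ (plus further nonnegative contributions to that same coordinate) as a combination of coordinates indexed by \emph{lexicographically smaller} partitions, i.e., partitions with \emph{more} parts. This system is triangular in the lex order: the unique solvable starting point is $(1,\dots,1)$ (support size $r$), whose equation has no other unknowns, and one then climbs lex-upward (support size decreasing) to $(r)$. You propose to start at $(r)$ and increase support size, but the equation for $(r)$, namely $(n+1)a_r+\sum_{l=1}^{r-1}\binom{r}{l}a_{\max(r-l,l),\min(r-l,l)}=0$, cannot produce $a_r=0$ without already knowing the two-part coordinates vanish.

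Adding the 3-cycle relation does not fix this for general $r$. For $r=3$, the single-support dissection equation gives $(n+1)a_3+6a_{2,1}=0$ while the 3-cycle relation gives $(n+1)a_3+3n(n-1)a_{2,1}+n(n-1)(n-2)a_{1,1,1}=0$; that is two equations in the three unknowns $a_3,a_{2,1},a_{1,1,1}$, which is underdetermined, so the claim that ``single-support equations combined with the 3-cycle relation kill coordinates of partition type $(r)$'' already fails at $r=3$. (It happens to work at $r=2$, where there are only two unknowns, which may be the source of the optimism.) The paper's actual proof needs neither the $A_{n+1}$-stabilizer nor any 3-cycle relation: for $r<n$ the dissection equations restricted to $(e_1,\dots,e_{n-1})$ are already triangular and of full rank in the lex order of partitions, and for $r=n$ the single remaining coordinate $a_{1,\dots,1}$ (with $n$ parts) is killed by one additional dissection relation evaluated on all of $e_1,\dots,e_n$, namely $0=-(n-2)a_{1,\dots,1}-(n-1)^2a_{2,1,\dots,1}$, which reduces to $a_{1,\dots,1}=0$ once the lex-smaller coordinate $a_{2,1,\dots,1}$ is known to vanish. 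So the correct argument runs from $(1,\dots,1)$ up to $(r)$, opposite to your plan; reversing your secondary induction and dropping the 3-cycle machinery is the fix.
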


\begin{proof}
As in~(\ref{def_diss}), we have
\begin{align*}
0=\oZ(T_{n-1}+[0,e_n])=\oZ(\phi_1 T_n)+\dots+\oZ(\phi_nT_n),
\end{align*}
as $\oZ$ is a simple, translation invariant valuation that vanishes on $T_{n-1}+[0,e_n]$. Thus, for any coordinate of $\oZ(T_{n-1}+[0,e_n])$ where ${r_1},\dots,{r_{n-1}}\in\{0,\dots,{r}\}$, we have the equations
\begin{align}\label{dissection_n-1}
0&=\oZ(T_{n-1}+[0,e_n])(e_1[r_1],\dots,e_{n-1}[r_{n-1}])\nonumber
\\[4pt]
&=\oZ(T_n)(e_1[r_1],\dots,e_{n-1}[r_{n-1}])+\oZ(\phi_2T_n)(e_1[r_1],\dots,e_{n-1}[r_{n-1}])\nonumber
\\
&\phantom{=\oZ(T_n)(e_1[r_1],\dots,e_{n-1}[r_{n-1}])+}+\dots+\oZ(\phi_nT_n)(e_1[r_1],\dots,e_{n-1}[r_{n-1}])\nonumber
\\
&=\oZ(T_n)(e_1[r_1],\dots,e_{n-1}[r_{n-1}])+\oZ(T_n)(\phi_2^te_1[r_1],\dots,\phi_2^te_{n-1}[r_{n-1}])\\
&\phantom{=\oZ(T_n)(e_1[r_1],\dots,e_{n-1}[r_{n-1}])+}+\dots+\oZ(T_n)(\phi_n^te_1[r_1],\dots,\phi_n^te_{n-1}[r_{n-1}])\nonumber
\\
&=\oZ(T_n)(e_1[r_1],\dots,e_{n-1}[r_{n-1}])+\oZ(T_n)(e_1+e_n[r_1],e_2[r_2],\dots,e_{n-1}[r_{n-1}]) \nonumber
\\
&\phantom{=\oZ(T_n)(e_1[r_1],\dots,e_{n-1}[r_{n-1}])+}+\dots+\oZ(T_n)(e_1[r_1],\dots, e_{n-2}[r_{n-2}],e_{n-1}+e_n[r_{n-1}]).\nonumber
\end{align}
For $r_1,\dots,r_n\in\N$ such that $r_1+\dots+r_n=r$, the corresponding  coordinate of  $\oZ(T_n)$ is $\oZ(T_n)(e_1[r_1],\dots,e_{n}[r_{n}])$. As $T_n$ is invariant under permutations, the permutations of the $r_j$'s are irrelevant. Without loss of generality, we may then assume $r_1\geq r_2\geq\cdots\geq r_n$ and drop $r_j$ when $r_j=0$ from our notation. Set $a_{r_1,\dots,r_m}=\oZ(T_n)(e_{i_1}[r_1],\dots,e_{i_m}[r_{m}])$ where $r_1+\dots+r_m=r$ and each $r_j\ge 1$.

We define a total order $\preceq$ on the coordinates $a_{r_1,\dots,r_m}$ by saying that $a_{r_1,\dots,r_m}\preceq a_{s_1,\dots,s_{m'}}$ if $r_1< s_1$ or if $r_1=s_1, \dots, r_{j-1}= s_{j-1}$ and $r_j < s_j$. Therefore, the coordinates are ordered in the following way from the biggest to smallest:
\[
a_{r},a_{r-1,1},a_{r-2,2},\dots,a_{\lceil \frac{r}{2} \rceil,\lfloor \frac{r}{2} \rfloor},a_{r-2,1,1},\dots,a_{\lceil \frac{r-1}{2} \rceil,\lfloor \frac{r-1}{2} \rfloor,1},\dots,a_{2,1,\dots,1},a_{1,\dots,1},
\]
where $\lfloor x\rfloor$ is the largest integer less or equal to $x$ and $\lceil x \rceil$ is the smallest integer greater or equal to $x$.

\goodbreak
We claim that the equations (\ref{dissection_n-1})  imply that the coordinates of $\oZ(T_n)$ that involve at most $(n-1)$ of $e_1, \dots, e_n$ all vanish. 
One can see this by noticing that, for given $r_1, \dots, r_m$ with $m<n$, the linear equation (\ref{dissection_n-1})
 only involves $a_{r_1,\dots,r_m}$ and coordinates that are smaller than this coordinate in the ordering defined above. Thus, for $r<n$, we have an equation for each coordinate and the system of equations can be regarded as an upper-triangular matrix that, therefore, has full-rank. Thus, each coordinate vanishes implying that $\oZ(T_n)=0$ for $r<n$.

Additionally, for $n=r$, we have
\begin{eqnarray*}
0&=&\oZ({T}_{n-1}+[0,e_n])(e_1,e_2,\dots,e_n)\\
&=&\oZ(T_n)(e_1,\dots,e_n)+\oZ(T_n)(e_1+e_n,e_2,\dots,e_{n-1},-(e_1+\dots+e_{n-1}))\\
&&+\oZ(T_n)(e_1,e_2+e_n,\dots,e_{n-1},-(e_2+\dots+e_{n-1}))\\
&&+\oZ(T_n)(e_1,e_2,e_3+e_n,\dots,e_{n-1},-(e_3+\dots+e_{n-1})) +\cdots\\
&&+\oZ(T_n)(e_1,\dots,e_{n-2},e_{n-1}+e_n,-e_{n-1})\\
&=&-(n-2) a_{1,\dots,1}-(n-1)^2 a_{2,1,\dots,1}.
\end{eqnarray*}
Thus $a_{1,\dots,1}=0$, as $a_{2,1,\dots,1}=0$ by the first step. As the first step also shows that all further coordinates vanish, this completes the proof.
\end{proof}

We now establish the three-dimensional case first and then, using this result, the general case.

\begin{lemma}\label{tensor_three}
Let $\,\oZ:\lp{3}\rightarrow\T^r$ be a simple, $\slz3$ equivariant, and translation invariant valuation. If $\,2\le r \le 8$, then $\oZ(T_3)=0$.
\end{lemma}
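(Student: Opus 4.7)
The plan is to apply the homogeneous decomposition of Theorem~\ref{tensor-poly} to reduce to a single degree of homogeneity, then dispose of each degree separately. Writing $\oZ(kP)=\sum_{i=0}^{3}\oZ_i(P)\,k^i$, each component $\oZ_i$ is $\slz3$ equivariant and translation invariant, and also simple: if $\dim P<3$, then $\dim(kP)<3$, so $\oZ(kP)=0$ for every $k\in\N$, which forces every coefficient of this polynomial in $k$ to vanish. It therefore suffices to treat the case where $\oZ$ is itself $i$-homogeneous for a single $i\in\{0,1,2,3\}$.

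The degenerate cases are quick. For $i=0$, homogeneity combined with simplicity gives $\oZ(T_3)=\oZ(0\cdot T_3)=\oZ(\{0\})=0$. For $i=3$, Proposition~\ref{trans_van} applies directly. For $i=1$, Lemma~\ref{zon}, applied coordinate-wise, gives $\oZ(T_2+[0,e_3])=0$ because $T_2+[0,e_3]$ is a $2$-cylinder and $j=2>i=1$; then Lemma~\ref{rlen} handles $r\in\{2,3\}$ and Lemma~\ref{tensor_simple_nge3} handles $r\in\{4,\dots,8\}$.

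The real obstacle is $i=2$, where $\oZ$ may fail to vanish on $2$-cylinders. I would apply the reciprocity theorem (Theorem~\ref{hom_rec}) to $P=T_2+[0,e_3]$. Since $\oZ$ is simple and $\dim P=3$, the only non-vanishing term in $\oZ^\circ(P)=\sum_F(-1)^{\dim F}\oZ(F)$ is the top-dimensional one, giving $\oZ^\circ(P)=-\oZ(P)$; meanwhile Theorem~\ref{hom_rec} with $i=2$ gives $\oZ^\circ(P)=\oZ(-P)$. By translation invariance, $\oZ(-P)=\oZ(-T_2+[0,e_3])$, and $-T_2+[0,e_3]=\phi_0 P$ for the map $\phi_0\in\slz3$ defined by $e_1\mapsto -e_1$, $e_2\mapsto -e_2$, $e_3\mapsto e_3$. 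The $\slz3$ equivariance of $\oZ$ then yields
\[
\oZ(-T_2+[0,e_3])(e_1[r_1],e_2[r_2],e_3[r_3])=(-1)^{r-r_3}\,\oZ(P)(e_1[r_1],e_2[r_2],e_3[r_3]),
\]
and combining with $\oZ(-P)=-\oZ(P)$ produces $\bigl((-1)^{r-r_3}+1\bigr)\,\oZ(P)(e_1[r_1],e_2[r_2],e_3[r_3])=0$. This coordinate therefore vanishes whenever $r$ and $r_3$ have the same parity, which is exactly the input needed for Lemma~\ref{tensor_simple_3o} when $r\in\{3,5,7\}$ (killing the $r_3$ odd coordinates) and for Lemma~\ref{tensor_simple_3e} when $r\in\{2,4,6,8\}$ (killing the $r_3$ even coordinates). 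In either case, $\oZ(T_3)=0$.
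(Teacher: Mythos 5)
Your proof is correct and follows essentially the same route as the paper's: reduce to a single degree $i$ of homogeneity, handle $i=3$ via Proposition~\ref{trans_van}, handle $i=1$ via Lemma~\ref{zon} followed by Lemmas~\ref{rlen} and \ref{tensor_simple_nge3}, and for $i=2$ use reciprocity to show $\oZ$ is odd and then combine the resulting parity constraint on the coordinates of $\oZ(T_2+[0,e_3])$ with Lemmas~\ref{tensor_simple_3o} and \ref{tensor_simple_3e}. The only cosmetic differences are that you invoke Theorem~\ref{hom_rec} where the paper uses the equivalent translation-invariant special case Theorem~\ref{McM_rec}, and you spell out the $i=0$ case, which the paper silently omits.
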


\begin{proof}
We only need to consider the statement for $\oZ$ being in addition $i$-homogeneous by Theorem~\ref{Polynomial}. If $\oZ$ is $3$-homogeneous, then it is trivial due to Proposition~\ref{trans_van}.  Lemma \ref{zon} implies that $\oZ(T_2+[0,e_3])=0$ if $i=1$.  Hence, Lemma \ref{tensor_simple_nge3} and Lemma \ref{rlen} imply that $\oZ(T_3)=0$ for $i=1$. Therefore, let $\oZ$ be $2$-homogeneous. 

Since  $\oZ$ is simple and $2$-homogeneous, Theorem \ref{McM_rec} implies that $\oZ(Q)= -\oZ(-Q)$, that is, $\oZ$ is \emph{odd}.
Using that $\oZ$ is odd, translation invariant, and $\slz3$ equivariant, we obtain
\begin{eqnarray}\label{odd}
\oZ(T_2+[0,e_3])(e_1[r_1], e_2[r_2], e_3[r_3])&=&-\oZ(-T_2-[0,e_3])(e_1[r_1], e_2[r_2], e_3[r_3])\nonumber\\
&=&-\oZ(-T_2+[0,e_3])(e_1[r_1], e_2[r_2], e_3[r_3])\\
 &=&(-1)^{r_1+r_2+1}\oZ(T_2+[0,e_3])(e_1[r_1], e_2[r_2], e_3[r_3]).\nonumber
\end{eqnarray}

First, let $r$ be odd. Then (\ref{odd}) implies that for $r_3$ odd,
\begin{equation*}
\oZ(T_2+[0,e_3])(e_1[r_1], e_2[r_2], e_3[r_3])=0.
\end{equation*}
We can therefore apply Lemma \ref{tensor_simple_3o} and obtain that $\oZ(T_3)=0$. This  implies the statement of the lemma for $r$ odd.

Second, let $r$ be even. Then (\ref{odd}) implies that for $r_3$ even,
\begin{equation*}
\oZ(T_2+[0,e_3])(e_1[r_1], e_2[r_2], e_3[r_3])=0.
\end{equation*}
Applying Lemma \ref{tensor_simple_3e} gives $\oZ(T_3)=0$. This completes the proof of the lemma.
\end{proof}

\goodbreak
\begin{proposition}\label{tensor_simple}
 Let $\,\oZ:\lpn\rightarrow\T^r$ be a simple, $\slnz$ equivariant, and translation invariant valuation. If $n\ge 3$ and $\,2\le r\le 8$, then $\oZ=0$.
\end{proposition}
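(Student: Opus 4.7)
My plan is to proceed by induction on $n\ge 3$. The base case $n=3$ follows by combining Lemma~\ref{tensor_three} with Corollary~\ref{BetkeKneserinvariance} (noting that the simpleness of $\oZ$ already gives $\oZ(T_j)=0$ for $j<3$). For the inductive step I fix $n\ge 4$, assume the proposition in dimensions $3,\dots,n-1$, and invoke Theorem~\ref{Polynomial} to reduce to an $i$-homogeneous $\oZ$ with $1\le i\le n$. The case $i=n$ is dispatched by Proposition~\ref{trans_van}. In every remaining case $1\le i\le n-1$, Corollary~\ref{BetkeKneserinvariance} and simpleness reduce the problem to $\oZ(T_n)=0$, which by Lemmas~\ref{tensor_simple_nge3} and~\ref{rlen} (jointly covering all admissible pairs $(n,r)$) further reduces to proving $\oZ(T_{n-1}+[0,e_n])=0$ as a full tensor in $\T^r$. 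For $i=1$, this is Lemma~\ref{zon}, since $T_{n-1}+[0,e_n]$ is a $2$-cylinder and $1<2$.

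For the core range $2\le i\le n-1$, the plan is to establish the vanishing of $\oZ(T_{n-1}+[0,e_n])$ slot by slot by introducing induced valuations on $\lp{n-1}$. For each $k\in\{0,\dots,r\}$ I set
\[
\bar\oZ^{(k)}(P)(v_1,\dots,v_{r-k})=\oZ(P+[0,e_n])(v_1,\dots,v_{r-k},e_n[k]),\quad v_j\in\R^{n-1}.
\]
A direct check shows that $\bar\oZ^{(k)}\colon\lp{n-1}\to\T^{r-k}(\R^{n-1})$ is a simple, translation invariant, and $\slz{n-1}$ equivariant valuation: simpleness uses that $\dim P<n-1$ forces $\dim(P+[0,e_n])<n$; equivariance comes from extending $\phi\in\slz{n-1}$ to $\hat\phi\in\slnz$ by fixing $e_n$, so that $\hat\phi^t e_n=e_n$. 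For $0\le k\le r-2$, so $2\le r-k\le r\le 8$, the inductive hypothesis in dimension $n-1\ge 3$ gives $\bar\oZ^{(k)}=0$, killing every coordinate of $\oZ(T_{n-1}+[0,e_n])$ with at most $r-2$ occurrences of $e_n$. For $k=r-1$, $\bar\oZ^{(r-1)}$ is $\R^{n-1}$-valued, so Theorem~\ref{vector} gives a representation $\sum c_j\oL_j^1$; using $\oL_j^1(P+y)=\oL_j^1(P)+\oL_{j-1}(P)y$ and the linear independence of $\oL_0,\dots,\oL_{n-1}$ from the Betke \& Kneser theorem, translation invariance forces every $c_j=0$, whence $\bar\oZ^{(r-1)}=0$.

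The last coordinate $\beta=\oZ(T_{n-1}+[0,e_n])(e_n[r])$ is the subtle step and, in my view, the main obstacle, because the inductive hypothesis does not cover rank $0$. Betke \& Kneser together with simpleness of $\bar\oZ^{(r)}$ identify $\bar\oZ^{(r)}=c\,V_{n-1}$ for some $c\in\R$. Expanding $\oZ(kP+k'[0,e_n])$ via Theorem~\ref{poly} as a bihomogeneous polynomial of total degree $i$ in $(k,k')$ and reading off the $e_n[r]$ slot, each coefficient $A_s(\cdot)(e_n[r])$ is a simple, $s$-homogeneous, translation invariant scalar valuation on $\lp{n-1}$, hence vanishes unless $s=n-1$ (by Betke \& Kneser and the Hadwiger-type Theorem~\ref{XIV} applied in $\lp{n-1}$). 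Since the expansion only runs over $0\le s\le i$, a nonzero $c$ is possible only in the single case $i=n-1$. There I would apply the reciprocity Theorem~\ref{McM_rec}: simpleness of $\oZ$ gives $\oZ^\circ(P)=(-1)^n\oZ(P)$ on $n$-dimensional $P$, so
\[
\oZ(T_{n-1}+[0,e_n])=(-1)^{n-i}\oZ\bigl(-(T_{n-1}+[0,e_n])\bigr)=-\oZ(-T_{n-1}+[0,e_n])
\]
after translating $[-e_n,0]$ to $[0,e_n]$; evaluating at $e_n[r]$ and using $V_{n-1}(-T_{n-1})=V_{n-1}(T_{n-1})$ yields $c=-c$, so $c=0$. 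With the full vanishing of $\oZ(T_{n-1}+[0,e_n])$ in hand, Lemmas~\ref{tensor_simple_nge3} and~\ref{rlen} deliver $\oZ(T_n)=0$, and Corollary~\ref{BetkeKneserinvariance} concludes the induction.
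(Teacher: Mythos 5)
Your proof is correct and follows the same inductive architecture as the paper: reduce to $i$-homogeneous valuations with $1\le i\le n-1$ via Theorem~\ref{Polynomial} and Proposition~\ref{trans_van}, slot off copies of $e_n$ to obtain simple, $\slz{n-1}$ equivariant, translation invariant valuations on $\lp{n-1}$, invoke the induction hypothesis to kill ranks $\ge 2$, handle ranks $1$ and $0$ by hand, conclude $\oZ(T_{n-1}+[0,e_n])=0$, and then get $\oZ(T_n)=0$ from Lemmas~\ref{tensor_simple_nge3} and~\ref{rlen} and finally $\oZ=0$ from Corollary~\ref{BetkeKneserinvariance}. The one place where you do markedly more work than the paper is the scalar slot $k=r$. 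The paper observes directly from the dissection of $kP+k[0,e_n]$ into $k$ translates of $kP+[0,e_n]$ (using simpleness and translation invariance of $\oZ$) that the induced valuation $\bar\oZ^{(k)}$ is homogeneous of degree $i-1$, not $i$; since $i-1\le n-2<n-1$ and $V_{n-1}$ is $(n-1)$-homogeneous, the identity $\bar\oZ^{(r)}=cV_{n-1}$ forces $c=0$ for every $i\le n-1$ at once. This makes your bivariate $(k,k')$-expansion and the reciprocity argument at $i=n-1$ unnecessary, though both steps are correct as you wrote them. (The same $(i-1)$-homogeneity observation is how the paper disposes of the vector slot $k=r-1$: $\bar\oZ^{(r-1)}$ would have to be a multiple of the moment vector, which is $n$-homogeneous, so it vanishes. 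Your linear-independence argument from the translation behavior of $\oL^1_j$ is a clean alternative there.) Your separate appeal to Lemma~\ref{zon} for $i=1$ is fine but also redundant, as the slot-by-slot argument already covers that case.
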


\begin{proof}
For $n=3$, we have $\oZ(T_3)=0$ by Lemma \ref{tensor_three} and the result follows from Corollary~\ref{BetkeKneserinvariance}. 

Let $n>3$ and suppose that the statement is true in dimension $n-1$. Let $\,\oZ:\lpn\rightarrow\T^r$ be a simple, $\slnz$ equivariant, and translation invariant valuation for $2\le r\le 8$. 
We only need to consider the statement for $\oZ$ being in addition $i$-homogeneous by Theorem~\ref{Polynomial}. 
If $\oZ$ is $n$-homogeneous, then it is trivial due to Proposition~\ref{trans_van}. So, let $1\le i\le n-1$.

Define $\oY: \lp{n-1} \to \T^s(\R^{n-1})$ by setting for $P\in\lp{n-1}$ 
\begin{equation*}\label{cyl_val}
\oY(P)(e_1[r_1], \dots, e_{n-1}[r_{n-1}])=\oZ(P+ [0,e_n])(e_1[r_1], \dots, e_{n-1}[r_{n-1}], e_n[r_n])
\end{equation*}
where $r_1+\dots+r_{n-1}=s$ and $r_1+\dots+r_n=r$.
Then $\oY$ is a simple, $\slz{n-1}$ equivariant, and  translation invariant valuation. 
Furthermore, $\oY$ is $(i-1)$-homogeneous as 
\[
k^i\oY(P)=k^i\oZ(P+[0,e_n])=\oZ(k\,P+k\,[0,e_n]))=k\oZ(kP+[0,e_n])=k\oY(kP)
\]
by the simplicity and translation invariance of $\oZ$.

For $2\le s\le 8$, the induction assumption implies that $\oY=0$. 
If $s=0$, then $\oY$ is real-valued and  $\slz{n-1}$ and translation invariant. Since it is simple, the Betke \& Kneser Theorem implies that it is a multiple of the $(n-1)$-dimensional volume as, by Lemma \ref{coefficients}, the only simple Ehrhart coefficient is volume. Hence, $\oY$ is also $(n-1)$-homogeneous and must vanish.
If $s=1$, then $\oY$ is vector-valued and $\slz{n-1}$ equivariant and translation invariant. Since it is simple, Theorem \ref{vector} implies that it is a multiple of the moment vector, as by Lemma \ref{coefficients} the only simple  Ehrhart tensor of rank one is the moment tensor. Thus $\oY$ is also $n$-homogeneous, which implies that it vanishes.

In particular, we obtain $\oZ(T_{n-1}+[0,e_n])=0$. Hence, Lemma \ref{tensor_simple_nge3} and Lemma \ref{rlen} imply that $\oZ(T_n)=0$. The result now follows from Corollary \ref{BetkeKneserinvariance}. 
\end{proof}

\subsection{General tensor  valuations}

Let $r\ge 2$. The translation property (\ref{L_q^r}) combined with (\ref{L0}) gives
\[
\oL_1^r(P+x)=\oL_1^r(P)+\oL_0^{r-1}(P)x=\oL_1^r(P),
\]
that is, $\oL_1^r$ is translation invariant.
We show that every $\slnz$ equivariant and translation invariant valuation is a multiple of $\oL_1^r$ for $2\le r\le 8$. We start with the case of 1-homogeneous valuations.

\begin{proposition}\label{1_hom}
Let $\oZ:\lpn\rightarrow\T^r$ be an $\slnz$ equivariant and  translation invariant valuation.
If $\,\oZ$ is $1$-homogeneous and $2\le r\le 8$, then there exists $c\in\R$ such that $\oZ=c\oL_1^r$.
\end{proposition}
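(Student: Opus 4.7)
The plan is to proceed by induction on $n$, with $n=2$ as the base case, using the classifications of simple tensor valuations (Propositions \ref{tensor_simple_planar} and \ref{tensor_simple}) to kill the residue after subtracting an appropriate multiple of $\oL_1^r$.

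For the base case $n=2$, since $T_1\in\lp{1}$, Lemma \ref{high_coord} forces $\oZ(T_1)=\alpha\,e_1^r$ for some $\alpha\in\R$, while Lemma \ref{nontriv} gives $\oL_1^r(T_1)=(-1)^r B_r\,e_1^r$ via Faulhaber's formula. If $r$ is even, then $B_r\neq 0$ and I set $c=\alpha/B_r$. If $r$ is odd (that is, $r\in\{3,5,7\}$), then $B_r=0$ and also $\oZ(T_1)=0$: the matrix $\phi=\operatorname{diag}(-1,-1)\in\slz{2}$ satisfies $\phi T_1=-T_1=T_1-e_1$, so combining $\slz{2}$-equivariance with translation invariance gives $\oZ(T_1)=\oZ(-T_1)=(-1)^r\oZ(T_1)$; in this case I take $c=0$. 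In either situation, the difference $\widetilde{\oZ}:=\oZ-c\oL_1^r$ vanishes on $T_1$, and hence on every lattice segment (by $\slz{2}$-equivariance and $1$-homogeneity) and on every point (by Lemma \ref{tinv_simple}). Thus $\widetilde{\oZ}$ is simple on $\lp{2}$, and Proposition \ref{tensor_simple_planar} yields either $\widetilde{\oZ}=0$ (for $r$ even) or $\widetilde{\oZ}=c'\oL_1^r$ (for $r$ odd). In both cases $\oZ$ is a scalar multiple of $\oL_1^r$.

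For the inductive step $n\geq 3$, I restrict $\oZ$ to $\lp{n-1}$, viewed as polytopes in the span of $e_1,\dots,e_{n-1}$. By Lemma \ref{high_coord}, the nontrivial coordinates of $\oZ(P)$ for $P\in\lp{n-1}$ all lie in $\T^r(\R^{n-1})$, and the restriction inherits $\slz{n-1}$-equivariance (via the block-diagonal embedding of $\slz{n-1}$ in $\slnz$), translation invariance, and $1$-homogeneity. The induction hypothesis then provides $c\in\R$ with $\oZ=c\,\oL_1^r$ on $\lp{n-1}$. Setting $\widetilde{\oZ}=\oZ-c\oL_1^r$, the difference vanishes on $\lp{n-1}$. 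For any $P\in\lpn$ with $\dim P<n$, I translate a vertex of $P$ to the origin (so that $\aff(P)$ becomes a $k$-dimensional lattice subspace) and pick $\phi\in\slnz$ mapping the primitive sublattice $\Z^n\cap\aff(P)$ to $\Z^k\times\{0\}\subset\Z^n$, placing a translate of $P$ into $\lp{k}\subseteq\lp{n-1}$; by $\slnz$-equivariance and translation invariance, $\widetilde{\oZ}(P)=0$. Hence $\widetilde{\oZ}$ is simple on $\lpn$, and Proposition \ref{tensor_simple} forces $\widetilde{\oZ}=0$, so $\oZ=c\oL_1^r$.

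The main obstacle is the parity dichotomy in the base case: for even $r$ the coefficient $c$ is pinned down by matching $\oZ(T_1)$ with $\oL_1^r(T_1)$, whereas for odd $r$ both vanish on $T_1$ and one must invoke Proposition \ref{tensor_simple_planar} to absorb a residual multiple of $\oL_1^r$. The inductive step is then a routine consequence of the simple classification in Proposition \ref{tensor_simple}, once one uses the standard fact that every lattice polytope of dimension less than $n$ is $\slnz$- and $\Z^n$-equivalent to one in $\lp{n-1}$.
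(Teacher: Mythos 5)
Your proof is correct, and the overall strategy coincides with the paper's: induct on $n$; identify the restriction of $\oZ$ to $\lp{n-1}$ with a valuation into $\T^r(\R^{n-1})$ via Lemma~\ref{high_coord}; extract $c$ from that restriction; observe that $\widetilde{\oZ}=\oZ-c\,\oL_1^r$ vanishes on all lower-dimensional lattice polytopes by $\slnz$-equivariance and translation invariance, hence is simple; and conclude via Propositions~\ref{tensor_simple_planar} and~\ref{tensor_simple}. The substantive difference is where the induction begins. The paper starts at $n=1$, asserting that every $1$-homogeneous translation invariant valuation $\lp{1}\to\T^r(\R)$ is a scalar multiple of $\oL_1^r$. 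For odd $r\in\{3,5,7\}$, however, $B_r=0$, so $\oL_1^r$ vanishes identically on $\lp{1}$ (this is precisely what makes $\oL_1^r$ simple on $\lp{2}$ by Lemma~\ref{nontriv}), while the length function is a nonzero $1$-homogeneous translation invariant valuation on $\lp{1}$; thus that auxiliary assertion needs the extra constraint inherited from $\slz{2}$-equivariance to be usable. Your $n=2$ base case supplies exactly this missing ingredient: $-\operatorname{Id}\in\slz{2}$ maps $T_1$ to a translate of itself, so combining equivariance with translation invariance gives $\oZ(T_1)=(-1)^r\oZ(T_1)$, forcing $\oZ(T_1)=0$ for odd $r$, while for even $r$ the nonvanishing of $B_r$ pins down $c$. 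Making this parity dichotomy explicit is the correct way to get the induction off the ground; beyond the base case, your argument and the paper's agree.
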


\begin{proof}
We use induction on the dimension $n$. The case $n=1$ is elementary (and also follows from the Betke \& Kneser Theorem) and states that, for a 1-homogeneous and translation invariant valuation $\oZ: \lp{1} \to \T^r(\R)$, we have  $\oZ=c\oL_1^r$ for some $c\in\R$. 

\goodbreak
Assume the statement holds for $n-1$. Restrict $\oZ$ to lattice polytopes with vertices in $\Z^{n-1}$. By  Lemma~\ref{high_coord}, we may view this restricted valuation as a function $\oZpr: \lp{n-1} \to \T^r(\R^{n-1})$. Since  $\oZpr$  is an $\operatorname{SL}_{n-1}(\Z)$ equivariant and translation invariant valuation on $\lp{n-1}$, by the induction hypothesis, there is $c\in\R$ such that $\oZpr(P)=c\oL_1^r(P)$ for  $P\in\lp{n-1}$.  By Lemma~\ref{high_coord},  for $1\le r_n\le r$, 
$$\oZ(P)(e_{i_1},\dots, e_{i_{r-r_n}},e_n[r_n])=c \oL_1^r(P)(e_{i_1},\dots, e_{i_{r-r_n}},e_n[r_n])=0$$ 
where $P\in\lp{n-1}$ and  $i_1, \dots, i_{r-r_n} \in\{1, \dots, n-1\}$. Hence $\oZ(P)=c\oL_1^r(P)$ for $P\in\lp{n-1}$. 
\goodbreak\noindent
Set
\[
\widetilde{\oZ}=\oZ-c\oL_1^r.
\]
Note that $\widetilde{\oZ}$ vanishes on $\lp{n-1}$. By the $\slnz$ equivariance and translation invariance of  $\widetilde{\oZ}$, this implies that $\widetilde{\oZ}$ vanishes on lattice polytopes in any $(n-1)$-dimensional lattice hyperplane $H\subset \R^n$ as we have $H\cap \Z^n=\phi \,\Z^{n-1}+x$ for some $\phi\in\slnz$ and $x\in\Z^n$. In other words, $\widetilde{\oZ}$ is simple and the statement follows from Propositions \ref{tensor_simple_planar} and \ref{tensor_simple}.
\end{proof}

\goodbreak
We can now extend Proposition~\ref{trans_van} to $i$-homogeneous valuations with $2\leq i\leq n$. 

\begin{proposition}\label{all_hom}
Let $\oZ:\lpn\rightarrow\T^r$ be an $\slnz$ equivariant and  translation invariant valuation.
If $\,\oZ$ is $i$-homogeneous with $2\leq i\leq n$ and $2\le r\le 8$, then $\oZ=0$.
\end{proposition}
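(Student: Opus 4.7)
The plan is to induct on the dimension $n$. The base case is $n=2$, in which the only permissible value is $i=2$, and this is precisely the content of Proposition \ref{trans_van}.

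For the inductive step, suppose $n\ge 3$ and the statement holds on $\lp{n-1}$. Let $\oZ$ be as in the hypothesis, with $2\le i\le n$. If $i=n$, then Proposition \ref{trans_van} immediately yields $\oZ=0$. Otherwise $2\le i\le n-1$, and I would restrict $\oZ$ to lattice polytopes contained in the coordinate hyperplane spanned by $e_1,\dots,e_{n-1}$, which we identify with $\lp{n-1}$. By Lemma \ref{high_coord}, every coordinate of $\oZ(P)$ involving $e_n$ vanishes when $P\in\lp{n-1}$, so the restriction can be viewed as a map $\oZpr:\lp{n-1}\to\T^r(\R^{n-1})$. By extending each $\phi\in\slz{n-1}$ to the element of $\slnz$ that fixes $e_n$, one checks that $\oZpr$ inherits from $\oZ$ the valuation property, translation invariance, $i$-homogeneity, and $\slz{n-1}$ equivariance. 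Since $2\le i\le n-1$, the induction hypothesis forces $\oZpr=0$.

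Hence $\oZ$ vanishes on every $P\in\lpn$ with $P\subset\{x_n=0\}$. Any other $(n-1)$-dimensional lattice hyperplane of $\rn$ can be carried onto $\{x_n=0\}$ by a combination of an $\slnz$ transformation and an integer translation, so the $\slnz$ equivariance and translation invariance of $\oZ$ propagate the vanishing to every lattice polytope contained in such a hyperplane; in particular, $\oZ$ is simple. Since $n\ge 3$, Proposition \ref{tensor_simple} then forces $\oZ=0$, completing the induction. The heavy lifting has already been done in Proposition \ref{tensor_simple} and Proposition \ref{trans_van}; the only point that requires care is the verification that the restriction $\oZpr$ is indeed an $\slz{n-1}$ equivariant, translation invariant, $i$-homogeneous valuation on $\lp{n-1}$ to which the inductive hypothesis applies, and that vanishing on a single coordinate hyperplane upgrades, via the group action, to full simplicity.
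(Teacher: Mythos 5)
Your proof is correct, and it takes a genuinely different (though ultimately closely related) route from the paper's. The paper fixes the ambient dimension $n$ throughout and inducts on the dimension $m$ of the polytope: Lemma~\ref{tinv_simple} handles $m<i$, a restriction to $\lp{i}$ plus Proposition~\ref{trans_van} handles $m=i$, and a restriction to $\lp{m}$ plus Proposition~\ref{tensor_simple} handles $m>i$, with each restricted case propagated to general polytopes of that dimension via $\slnz$ equivariance and translation invariance. You instead induct on the ambient dimension $n$ itself, with base case $n=2$ (forcing $i=2$) and the top case $i=n$ both killed by Proposition~\ref{trans_van}, and for $2\le i\le n-1$ you restrict once to the hyperplane $\{x_n=0\}$, apply the induction hypothesis to $\oZpr$, upgrade the resulting vanishing on $\lp{n-1}$ to simplicity via the group action, and finish with Proposition~\ref{tensor_simple}. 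What your version buys is that Lemma~\ref{tinv_simple} becomes unnecessary — the vanishing on low-dimensional polytopes is absorbed automatically into the induction hypothesis applied to $\oZpr$, so the case analysis collapses to essentially one step; what the paper's version buys is that it avoids an induction over $n$ and exhibits explicitly how each homogeneity degree interacts with each polytope dimension. Both hinge on exactly the same three ingredients (Lemma~\ref{high_coord}, Proposition~\ref{trans_van}, Proposition~\ref{tensor_simple}) and the same propagation-by-equivariance device, so while the induction variable differs, the mathematical content is the same.
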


\begin{proof}
Lemma~\ref{tinv_simple}  implies that the valuation $\oZ$ vanishes on all lattice polytopes of dimension $m<i$. We use induction on $m$ and show that $\oZ$ also vanishes on all $m$-dimensional lattice polytopes for $i\le m\le n$.

\goodbreak
First, let $m=i$. Restrict $\oZ$ to lattice polytopes in $\lp{i}$. By  Lemma~\ref{high_coord}, we may view this restricted valuation as a function $\oZpr: \lp{i} \to \T^r(\R^{i})$. Since $\oZpr$ is invariant under translations of $\lp{i}$ into itself and  $\slz{i}$ equivariant,  Proposition~\ref{trans_van} implies that $\oZpr$ vanishes on $\lp{i}$. Thus, by Lemma~\ref{high_coord}, we  obtain that also $\oZ$ vanishes on lattice polytopes with vertices in $\Z^i$.   Now, let $P\in\lpn$ be a general  $i$-dimensional lattice polytope.  Let $H$ be the $i$-dimensional subspace of $\R^n$ that is parallel to the affine hull of $P$. There exists $x\in\Z^n$ such that $P+x\in H$ and there exists  $\phi\in\slnz$ such that $\phi(H\cap\Z^n)=\Z^i$. Since $\oZ$ vanishes on  $\lp{i}$ and is $\slnz$ equivariant and translation invariant, we obtain that $\oZ(P)=0$.

\goodbreak
Next, for  $m>i$, suppose that $\oZ(Q)=0$ for all $Q\in\lpn$ with $\dim(Q)<m$. By  Lemma~\ref{high_coord}, we may view the restriction of $\oZ$ to lattice polytopes in $\lp{m}$ as a function $\oZpr: \lp{m} \to \T^r(\R^{m})$. Since $\oZpr$ is a simple,  $\slz{m}$ equivariant and translation invariant valuation and  $m\geq 3$,  Proposition~\ref{tensor_simple} implies that $\oZpr$ vanishes on  $\lp{m}$. As in the previous step, this implies that  $\oZ(P)=0$ for any $m$-dimensional lattice polytope in $\lpn$ and, by induction, we have $\oZ=0$.
\end{proof}

\goodbreak

The characterization of $\oL_1^r$ follows immediately from the combination of Theorem~\ref{Polynomial}, Proposition~\ref{1_hom}, and Proposition~\ref{all_hom}. 

\begin{corollary}\label{trans_inv}
For $2\le r \le 8$, a function $\oZ:$~$\lpn\rightarrow$~$\T^r$ is an $\,\slnz$ equivariant and translation invariant valuation if and only if there exists $c\in\R$ such that $\oZ=c\oL_1^r$.
\end{corollary}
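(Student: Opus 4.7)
The plan is to assemble the three ingredients named immediately before the corollary. For the easy direction, I would note that $\oL_1^r$ is an $\slnz$ equivariant, translation covariant valuation by Corollary~\ref{cor:38}, and moreover translation invariant for $r \ge 2$ and $n \ge 2$, since combining (\ref{L_q^r}) with (\ref{L0}) yields
\[
\oL_1^r(P+y) = \oL_1^r(P) + \oL_0^{r-1}(P)\,y = \oL_1^r(P),
\]
as already observed at the opening of this subsection. Scalar multiples obviously inherit all three properties, giving the ``if'' implication.

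For the converse, assume $\oZ\colon \lpn \to \T^r$ is an $\slnz$ equivariant and translation invariant valuation. Translation invariance is precisely the translative polynomial condition with $d = 0$, so Theorem~\ref{Polynomial} furnishes the homogeneous decomposition
\[
\oZ(kP) = \sum_{i=0}^{n} \oZ_i(P)\, k^i \qquad \text{for all } k \in \N,\ P \in \lpn,
\]
where each $\oZ_i$ is an $i$-homogeneous, translation invariant valuation. The remark following Theorem~\ref{tensor-poly} transfers $\slnz$ equivariance from $\oZ$ to each $\oZ_i$. I would then eliminate the homogeneous components one at a time: equation (\ref{L0}), applicable since $n \ge 2$ and $r \ge 2$, forces $\oZ_0 = 0$; Proposition~\ref{all_hom} forces $\oZ_i = 0$ for every $i$ with $2 \le i \le n$; and Proposition~\ref{1_hom} produces a constant $c \in \R$ with $\oZ_1 = c\,\oL_1^r$. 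Evaluating the decomposition at $k = 1$ yields $\oZ = c\,\oL_1^r$, completing the forward direction.

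There is no genuine obstacle at this stage: all of the substantive work has already been carried out in Propositions~\ref{1_hom} and~\ref{all_hom}, where the hypothesis $2 \le r \le 8$ enters through the rank computations for simple valuations on the standard simplex. The corollary is merely the clean packaging of those two classification results via the homogeneous decomposition, together with the observation that the zero-homogeneous component must vanish under $\slnz$ equivariance.
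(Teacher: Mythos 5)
Your proposal is correct and matches the paper's own argument: the paper states that the corollary "follows immediately from the combination of Theorem~\ref{Polynomial}, Proposition~\ref{1_hom}, and Proposition~\ref{all_hom}," which is exactly the decomposition-then-eliminate-components route you take, with the vanishing of $\oZ_0$ handled via (\ref{L0}) just as the paper implicitly uses. Nothing is missing and nothing is done differently.
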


Together with Proposition~\ref{linear}, we obtain the following consequence of Corollary~\ref{trans_inv}.

\begin{corollary}\label{mink_add}
For $2\le r \le 8$, a function $\oZ:\lpn\to\T^r$ is  $\slnz$ equivariant, translation invariant, and Minkowski additive if and only if there exists $c\in\R$ such that $\oZ=c\oL_1^r$.
\end{corollary}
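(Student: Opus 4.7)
The plan is to derive Corollary~\ref{mink_add} as an immediate combination of Corollary~\ref{trans_inv} and Proposition~\ref{linear}; no substantially new ideas are required, since one of the two implications is vacuous and the other is just a verification.

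For the ``if'' direction, I need to verify that $c\,\oL_1^r$ satisfies all three listed properties. The $\slnz$ equivariance is built into the homogeneous decomposition of $\oL^r$ furnished by Corollary~\ref{cor:38}. Translation invariance is precisely the identity $\oL_1^r(P+y)=\oL_1^r(P)$ obtained from (\ref{L_q^r}) combined with (\ref{L0}), as noted at the beginning of the current subsection. Finally, Proposition~\ref{linear} applied to the 1-homogeneous, translation invariant valuation $\oL_1^r$ yields Minkowski additivity, and all three properties evidently pass to the scalar multiple $c\,\oL_1^r$.

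For the ``only if'' direction, $\oZ$ is by assumption an $\slnz$ equivariant and translation invariant valuation with values in $\T^r$ and $2\le r\le 8$, so Corollary~\ref{trans_inv} applies directly and produces $c\in\R$ with $\oZ=c\,\oL_1^r$. The Minkowski-additivity hypothesis is not even used in this implication; it only serves to single out the class of functionals being characterized. (If one prefers to make use of it, Minkowski additivity yields $\oZ(kP)=k\,\oZ(P)$ by iteration, so $\oZ$ is automatically $1$-homogeneous, which gives a slightly sharper route through the $1$-homogeneous component of Corollary~\ref{trans_inv}, namely Proposition~\ref{1_hom}.) I do not anticipate any obstacle: all of the work has already been done in establishing Corollary~\ref{trans_inv} and Proposition~\ref{linear}, and the corollary is essentially a bookkeeping statement highlighting that the Minkowski additive members of the classified family are exactly the multiples of~$\oL_1^r$.
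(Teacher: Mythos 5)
Your proof is correct and follows exactly the route the paper intends: the paper derives Corollary~\ref{mink_add} as an immediate consequence of Corollary~\ref{trans_inv} (which handles the ``only if'' direction, making the Minkowski additivity hypothesis superfluous there) and Proposition~\ref{linear} (which supplies Minkowski additivity of $\oL_1^r$ in the ``if'' direction, since $\oL_1^r$ is $1$-homogeneous and translation invariant). Your unpacking of both implications, including the observation that the Minkowski additivity assumption is not used in proving that $\oZ = c\,\oL_1^r$, is exactly right.
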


\subsection{Proof of Theorem~\ref{tensor}}

The classification is now obtained by an inductive proof on the rank $r$. Recall that the Betke \& Kneser Theorem gives the characterization for the case $r=0$ and Theorem \ref{vector} for $r=1$. The induction assumption gives that 
$$\oZ^{r-1}=\sum_{i=1}^{n+r-1}c_{i}\oL_i^{ r-1}$$ 
for some constants $c_{1},\dots,c_{n+r-1}\in\R$. Furthermore, for any $y\in\Z^n$, this characterization together with Proposition \ref{assoc1} applied to $\oZ^{r-1}$ and to $\oL_i^{r-1}$  yields
\begin{eqnarray*}
\oZ^{r-1}(P+y)&=&\oZ^{r-1}(P)+\oZ^{r-2}(P)\frac{y}{1!}+\cdots+\oZ^0(P)\frac{y^{r-1}}{(r-1)!}\\
&=&\sum_{i=1}^{n+r-1}c_{i}\oL_i^{ r-1}(P+y)\\
&=&\sum_{i=1}^{n+r-1}c_{i} \,\Big(\oL_i^{ r-1}(P)+\oL_{i-1}^{ r-2}(P)\frac{y}{1!}+\cdots+\oL_{i-r+1}^{ 0}(P)\frac{y^{r-1}}{(r-1)!}\Big).
\end{eqnarray*}
A comparison of the coefficients of the polynomial expansion in $y$ gives 
\[
\oZ^{r-j}(P)=\sum_{i=1}^{n+r-1}c_{i}\oL_{i-j+1}^{r-j}(P).
\]
Consider the $\slnz$ equivariant valuation
\[
\widetilde{\oZ}=\oZ-\sum_{i=2}^{n+r}c_{i-1}\oL_{i}^{ r}.
\]
For $y\in\Z^n$, by Proposition \ref{assoc1} and the induction assumption, we obtain
\begin{eqnarray*}
\widetilde{\oZ}(P+y)&=&\oZ(P+y)-\sum_{i=2}^{n+r}c_{i-1}\oL_{i}^{ r}(P+y)\\[-2pt]
&=&\oZ(P)+\sum_{j=1}^r\oZ^{r-j}(P)\frac{y^j}{j!}-\sum_{i=2}^{n+r}\sum_{j=0}^rc_{i-1}\oL_{i-j}^{ r-j}(P)\frac{y^j}{j!}\\
&=&\oZ(P)+\sum_{j=1}^r\sum_{i=1}^{n+r-1}c_{i}\oL_{i-j+1}^{ r-j}(P)\frac{y^j}{j!}-\sum_{i=2}^{n+r}c_{i-1}\oL_{i}^{ r}(P)-\sum_{i=2}^{n+r}\sum_{j=1}^rc_{i-1}\oL_{i-j}^{ r-j}(P)\frac{y^j}{j!}\\
&=&\oZ(P)-\sum_{i=2}^{n+r}c_{i-1}\oL_{i}^{ r}(P)\\[2pt]
&=&\widetilde{\oZ}(P).
\end{eqnarray*}
Consequently, the function $\widetilde{\oZ}$ is translation invariant and Corollary~\ref{trans_inv} implies that $\widetilde{\oZ}=c_1\oL^r_1$ proving the theorem.

\goodbreak

\section*{Acknowledgments} The work of both authors was supported, in part, by the Austrian Science Fund (FWF) Projects P25515-N25 and I3017-N35.

\footnotesize

\bigskip
\bigskip
\parindent=0pt
\parbox[t]{8.5cm}{
\begin{samepage}
Monika Ludwig\\
Institut f\"ur Diskrete Mathematik und Geometrie\\
Technische Universit\"at Wien\\
Wiedner Hauptstra\ss e 8-10/1046\\
1040 Wien, Austria\\
E-mail: monika.ludwig@tuwien.ac.at
\end{samepage}}
\parbox[t]{7cm}{
\begin{samepage}
Laura Silverstein\\
Institut f\"ur Diskrete Mathematik und Geometrie\\
Technische Universit\"at Wien\\
Wiedner Hauptstra\ss e 8-10/1046\\
1040 Wien, Austria\\
E-mail: laura.silverstein@tuwien.ac.at
\end{samepage}}

\end{document}